\newcommand\C{\mathbb{C}}
\newcommand\Q{\mathbb{Q}}
\newcommand\N{\mathbb{N}}
\newcommand\Z{\mathbb{Z}}
\newcommand\U{\mathcal{U}}
\newcommand\F{\mathbb{F}}
\newcommand\G{\mathbb{G}}
\newcommand\cP{\mathcal{P}}
\newcommand\PP{\mathbb{P}}
\newcommand\MM{\mathbb{M}}
\newcommand\K{\mathbb{K}}
\newcommand\CC{\mathscr{C}}
\newcommand\LL{\mathscr{L}}
\newcommand\LLf{\mathscr{L}_{\mathrm{fields}}}
\newcommand\LLr{\mathscr{L}_{\mathrm{ring}}}
\newcommand\LLm{\mathscr{L}_{\mathrm{m}}}
\newcommand{\set}[1]{\left\{ {#1} \right\}}
\newcommand{\vect}[1]{\langle {#1} \rangle}
\newcommand{\abs}[1]{\lvert {#1} \rvert}
\newcommand{\oF}[1]{\ol{\F_{{#1}}}}
\newcommand{\oT}[1]{\cl_\theta{({#1})}}
\newcommand{\NSOP}[1]{\mathrm{NSOP}_{#1}}
\newcommand{\ol}[1]{\overline{#1}}  
\newcommand{\ACFG}{\mathrm{ACFG}}
\newcommand{\ACF}{\mathrm{ACF}}
\newcommand{\ACFH}{\mathrm{ACFH}}
\newcommand{\Tm}{T_\mathrm{m}}
\newcommand{\acl}{\mathrm{acl}}
\newcommand{\dcl}{\mathrm{dcl}}
\newcommand{\cl}{\mathrm{cl}}
\newcommand{\Endd}{\mathrm{Endd}}
\newcommand{\Id}{\mathrm{Id}}
\newcommand{\Frob}{\mathrm{Frob}}
\newcommand{\alg}{\mathrm{alg}}
\newcommand{\eq}{\mathrm{eq}}
\newcommand{\pw}{\mathrm{pw}}
\newcommand{\tp}{\mathrm{tp}}
\newcommand{\dv}{\mathrm{div}}
\newcommand{\RM}{\mathrm{RM}}
\newcommand{\DM}{\mathrm{DM}}  
\newcommand{\Proj}{\mathrm{Proj}} 
\newcommand{\Diag}{\mathrm{Diag}}
\DeclareMathOperator{\fix}{fix}
\DeclareMathOperator{\car}{char}
\definecolor{airforceblue}{rgb}{0.36, 0.54, 0.66}
\theoremstyle{plain}
\newtheorem{theorem}{Theorem}[section]
\newtheorem{observation}[theorem]{Observation}
\newtheorem{corollary}[theorem]{Corollary}
\newtheorem{lemma}[theorem]{Lemma}
\newtheorem{proposition}[theorem]{Proposition}
\newtheorem{fact}[theorem]{Fact}
\newtheorem*{theorem*}{Theorem}
\newtheorem*{fact*}{Fact}
\newtheorem{alphatheorem}{Theorem}
\theoremstyle{definition}
\newtheorem{definition}[theorem]{Definition}
\newtheorem{example}[theorem]{Example}
\newtheorem{question}[theorem]{Question}
\theoremstyle{remark}
\newtheorem{remark}[theorem]{Remark}
\newtheorem{claim}{Claim}
\newcommand{\setword}[2]{%
  \phantomsection
  #1\def\@currentlabel{\unexpanded{#1}}\label{#2}%
}
\def\seq{\subseteq}
\def\Ind{\setbox0=\hbox{$x$}\kern\wd0\hbox to 0pt{\hss$\mid$\hss}
\lower.9\ht0\hbox to 0pt{\hss$\smile$\hss}\kern\wd0}
\def\Notind{\setbox0=\hbox{$x$}\kern\wd0\hbox to 0pt{\mathchardef
\nn=12854\hss$\nn$\kern1.4\wd0\hss}\hbox to
0pt{\hss$\mid$\hss}\lower.9\ht0 \hbox to 0pt{\hss$\smile$\hss}\kern\wd0}
\def\ind{\mathop{\mathpalette\Ind{}}}
\def\indi#1{\mathop{\ \ \hbox to 0ex{\hss$\vert^{\hbox to 0ex{$\scriptstyle#1$\hss}}$\hss}
\lower1ex\hbox to 0ex{\hss$\smile$\hss}\ \ }}
\def\nindi#1{\mathop{\ \ \hbox to 0ex{\hss$\!\not{\vert}^{\hbox to 0ex{$\scriptstyle\,#1$\hss}}$\hss}
\lower1ex\hbox to 0ex{\hss$\smile$\hss}\ \ }}
\renewcommand{\models}{\vDash}
\begin{document}
\title[Generic endomorphism]{Generic multiplicative endomorphism of a field}

\author[C. d'Elb\'{e}e]{Christian d\textquoteright Elb\'ee}
\address{School of Mathematics, University of Leeds\\
Office 10.17f LS2 9JT, Leeds}
\urladdr{\href{http://choum.net/\textasciitilde chris/page\textunderscore perso/}{http://choum.net/\textasciitilde chris/page\textunderscore perso/}}
\thanks{
\begin{minipage}{0.7\textwidth}
 The author is fully supported by the UKRI Horizon Europe Guarantee Scheme, grant no EP/Y027833/1.
\end{minipage}%
\begin{minipage}{0.3\textwidth}
\begin{center}
    \includegraphics[scale=.04]{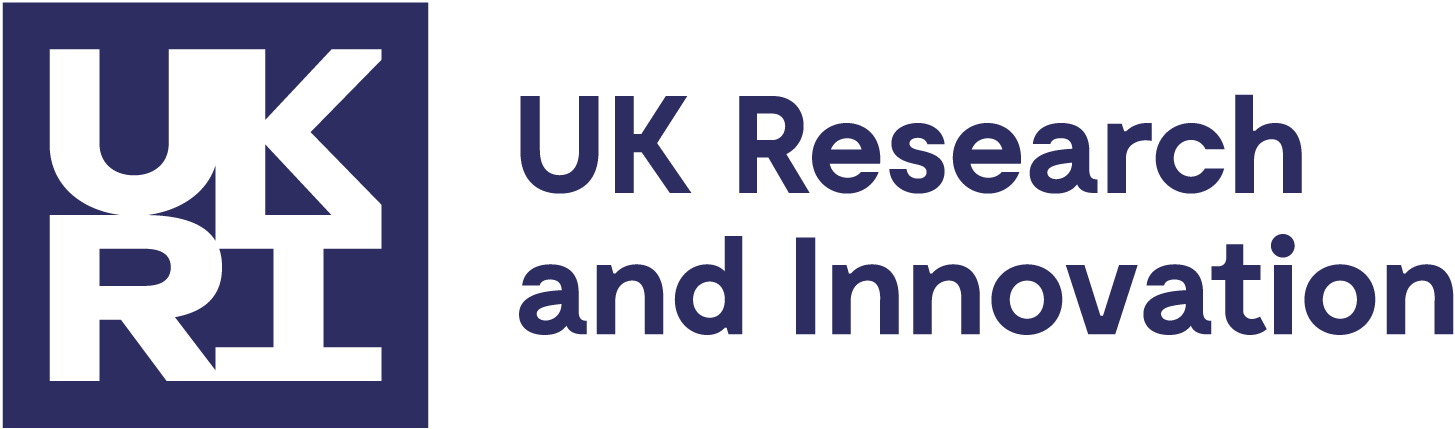}
\end{center}
\end{minipage}
}

\date{\today}

\begin{abstract}
We introduce the model-companion of the theory of fields expanded by a unary function for a multiplicative endomorphism, which we call ACFH. Among others, we prove that this theory is NSOP$_1$ and not simple, that the kernel of the map is a generic pseudo-finite abelian group. We also prove that if forking satisfies existence, then ACFH has elimination of imaginaries.
\end{abstract}

\maketitle





\setcounter{tocdepth}{1}
\hrulefill
\tableofcontents	
\hrulefill


\section*{Introduction}

This article describes the generic theory of fields equipped with a multiplicative map, that is, a map satisfying $\theta(xy) = \theta(x)\theta(y)$, $\theta(1) = 1$ and $\theta(x) = 0$ if and only if $x=0$. Such a map defines an endomorphism of the multiplicative group of the field. Formally, if $\LL$ denotes the language of fields extended by a unary function symbol $\theta$ we let $T_0$ be the $\LL$-theory of fields where $\theta$ is a multiplicative map, and let $T$ be the extension of $T_0$ expressing that the field is algebraically closed. The first main results of this paper (Corollary \ref{cor_modelcompanion}, Theorems \ref{thm_complet_diag} and \ref{thm_ACFH_NSOP1}) are summarized as follows.

\begin{alphatheorem}
The model companion of $T_0$ (and of $T$) exists, we denote it ACFH. ACFH is the model-completion of $T$. ACFH is NSOP$_1$ and not simple.
\end{alphatheorem}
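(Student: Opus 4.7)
The theorem bundles existence of a model companion $\ACFH$ for $T_0$ (equivalently for $T$), its identification as the model completion of $T$, and its classification as $\NSOP{1}$ but not simple. For the model companion, I would first observe that $T_0$ and $T$ have the same model companion: every $(K,\theta)\models T_0$ embeds into a model of $T$, since the algebraic closure $K^\alg$ admits a multiplicative endomorphism extending $\theta$ because $(K^\alg)^\times$ is a divisible abelian group (up to the standard torsion obstruction in positive characteristic). Then I would axiomatize the existentially closed models of $T$ in the spirit of Chatzidakis--Hrushovski's axiomatization of ACFA: pairs of absolutely irreducible varieties $V,W$ fitting a compatible $\theta$-configuration must admit a common generic point. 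The key technical input is that any finitely generated $\theta$-extension over an algebraically closed $(F,\theta_F)$ embeds into a further model, because multiplicative independence and divisibility allow $\theta$-values to be prescribed freely on a transcendence basis and then extended to the algebraic closure.

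For the model completion, I would verify that $T$ has the amalgamation property over its models: given two $\theta$-extensions $K_1,K_2$ of $F\models T$, their free compositum admits a common multiplicative map restricting to $\theta_1,\theta_2$, once again using divisibility. Combined with the model companion, this upgrades to a model completion, so $\ACFH$ has quantifier elimination modulo $T$.

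For $\NSOP{1}$, the plan is to apply the Kaplan--Ramsey criterion by exhibiting an invariant ternary relation $\ind$ behaving like Kim-independence over models. The natural candidate is: $A\ind_M B$ iff $\acl(MA)$ and $\acl(MB)$ are field-theoretically algebraically independent over $M$. After routine verification of invariance, monotonicity, symmetry, existence and strong finite character, the main step is the independence theorem over models: given pairwise $\ind$-independent $A_1,A_2,A_3$ over $M$ with compatible types, one must produce a common realization, amounting to amalgamating three $\theta$-structures over a free compositum. I expect this to be the hardest step of the entire theorem, combining the divisibility arguments of the axiomatization with the genericity of $\theta$ on multiplicatively independent tuples. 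Finally, non-simplicity must be witnessed by $\TP{2}$ (since $\NSOP{1}$ implies $\NTP{1}$); I would seek a formula involving the multiplicative kernel $\ker\theta$, proved elsewhere in the paper to be a generic pseudo-finite abelian subgroup of $\U^\times$. By analogy with d'Elb\'ee's earlier theory $\ACFG$ of algebraically closed fields with a generic additive subgroup (also $\NSOP{1}$ but not simple), I expect a coset-membership formula relative to $\ker\theta$ to witness $\TP{2}$, with an array varying cosets along rows and aligning columns along paths.
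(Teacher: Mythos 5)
Your overall plan tracks the paper's: amalgamation of $T$ over its models yields that any model companion would be a model completion; an ACFA-style geometric axiomatization of the existentially closed models; the Kaplan--Ramsey criterion via the relation ``$\acl$-closures are algebraically independent over the base''; and non-simplicity via $\TP{2}$ witnessed by the kernel as a generic subgroup (the paper points to the formula $xy+z\in G$ for $G=\ker\theta$, exactly the analogy with $\ACFG$ you suggest).

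There is, however, a genuine gap in the axiomatization step that your sketch glosses over, and it is the technical heart of the whole construction. The ACFA recipe ``$U\subseteq V\times V^\sigma$ projects generically onto both factors'' cannot be copied: here $\theta$ is not a field isomorphism but a multiplicative homomorphism, so the right condition on a variety $V$ is not Zariski-generic projection but projection onto an \emph{m-variety} (a coset of an algebraic subtorus, encoding precisely the multiplicative relations among the prescribed coordinates) with the projected coordinates remaining \emph{multiplicatively independent} over the base. Your remark about prescribing $\theta$ ``on a transcendence basis'' misses this: multiplicative independence is the relevant notion, not algebraic independence, and you must also prescribe, via a ``complete system of minimal equations,'' the finite-order behaviour of the remaining coordinates over the multiplicative span of the independent part. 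Even granting the right geometric condition, nothing in your plan explains why it is first-order: the condition ``$V$ is multiplicatively $r$-free'' does not reduce to a statement about dimensions of projections, and its uniform definability is a nontrivial theorem (due essentially to Zilber in the context of pseudo-exponentiation, relying on Zilber's Indecomposability Theorem and the classification of definable subgroups of $\G_m^n$). Without this input the class of existentially closed models would not be visibly elementary and the whole theorem would stall. The paper further reduces to affine curves via a Bertini-type argument, but that is an optimization rather than a necessity.

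One smaller point: to get non-simplicity one does not strictly need $\TP{2}$. Since $\NSOP 1$ plus simplicity is equivalent (by the Kim--Pillay-style criterion) to Kim-independence satisfying base monotonicity, the paper exhibits an explicit configuration where $a\indi\theta_E bc$ but $a\nindi\theta_{Eb} c$, using a transcendental $b,c$ with $c=\theta(a+b)$. Your $\TP{2}$ route via the kernel is valid and is also invoked in the paper, but it requires the separate genericity result for $\ker\theta$ which comes later; the base-monotonicity failure is self-contained and more elementary.
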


 Models of ACFH, or equivalently existentially closed (e.c.) models of $T$ relate to both classical and recent classes of e.c. structures studied for their tameness properties in the (unstable) NSOP$_1$ context. As ACFH, many are obtained by an expansion process: start with a tame theory, enrich it in an expanded language and study e.c. models. Considering expansions, we distinguish between on one side, \textit{unstructured generic expansions} where no structure is imposed on the new elements in the language, such as the generic predicate \cite{CP98} or more generally Winkler's generic expansions $T^{\LL,\LL'}_{1}$ of an $\LL$-theory $T_1$ by an arbitrary $\LL'$-structure, for $\LL'\supseteq \LL$ and the generic Skolemization $T_1^\mathrm{Sk}$ expansion \cite{Win75,KrR18}. On another side, \textit{structured generic expansions} (typically of ACF) are those where the expansion is imposing some structure on new elements of the language, such as ACFA, the theory of generic difference fields \cite{CH99}. Examples include fields expanded by generic subgroups: ACFG$^+$ for the additive case in positive characteristic, ACFG$^\times$ in the multiplicative case in all characteristic \cite{dE21A,dE21B} or more recently the generalisation of those to modules, treated in the setting of positive logic \cite{dEKN21}; and the class of generic exponential fields \cite{HK21}, also treated in positive logic, which we denote \textit{Acfe} to underline that it is not elementary. 

The new theory ACFH lies at the centre of the aforementioned class of structures and provides a new sort of expansion which merges together those different examples. For instance, ACFH cumulates the ways TP$_2$ appears in previous examples. In unstructured genericity, TP$_2$ appears from the presence of a `non-linear' binary map: the theory $T_1^{\emptyset,\LL}$ has TP$_2$ as soon as $\LL$ contains a binary function symbol, see also \cite[Lemma 3.1]{Nub04}, for an analogous phenomenon in the generic Skolemization. 
As for structured genericity, the essential complexity of ACFG$^{+/\times}$ (TP$_2$) is located in the tension between the field pregeometry of ACF and the modular pregeometry of the group G$^{+/\times}$: the theory of an algebraically closed field with a predicate for a generic set (generic predicate) or a generic field (lovely pairs) is simple, however, if the predicate is a group as in ACFG$^{+/\times}$, it has TP$_2$.  ACFH cumulates those two features: TP$_2$ appears from both the presence of generic subgroups (the kernels --studied in Section \ref{section_modelsofACFH}, see also below) and the presence of non-linear binary maps (for instance the function $(x,y)\mapsto \theta(x+y)$), which behaves as a random symmetric binary map. This is characteristic of the robustness of the class of NSOP$_1$ theories to cumulating different forms of randomness, and underlines the general feeling that, if simple theories are thought of as stable ones plus random noise, NSOP$_1$ theories are those with a random roar. The presence of many generic subgroups in ACFH (see below) also leans towards this train of thought. 


\subsection*{Axioms} The axiomatization of ACFH may be regarded as in-between ACFA and \textit{Acfe}. The presence of the torsion in the multiplicative group of an algebraically closed field makes the characterisation of e.c. models of $T$ quite different from the one in \textit{Acfe}, where the domain group, $\G_a$ is torsion-free. As a matter of fact, the characterisation of e.c. models of $T$ given in Theorem \ref{thm_genhom_mult} is exactly patterned around the corresponding one for difference fields, where notions such as `projects generically' and `varieties' are replaced by corresponding notions related to the theory of the multiplicative group, that we call `projects m-generically' and `m-varieties' where `m' stands for multiplication. It relies on a characterisation of irreducible sets where the homomorphism $\theta$ can be generically extended. The characterisation of e.c. models is stated as such to emphasize that there is an underlying pattern in the construction of ACFH that might be developed further (see below). Then comes another crucial difference with \textit{Acfe}, which is that this characterisation of e.c. models of $T$ is first-order. This relies essentially on the same definability results that were used in the axiomatisation of ACFG$^\times$: the uniform definability of varieties which are \textit{free of multiplicative dependences} (or just \textit{free}), a notion that was used to axiomatize Zilber's pseudo exponential fields \cite{Zil05} and already known to be first order at that time. It was then used in different contexts, such as \cite{Tra17} (where it is called 'multiplicatively large') to axiomatize generic multiplicative circular orders on algebraically closed fields in positive characteristic and also in \cite{BHG14}. This notion actually traces back earlier, and can be related to the Mann property \cite{Man65} or other Mordell-Lang-type statements. Using Bertini arguments as in \cite{Tra17}, we deduce that the axiomatization of ACFH can be reduced to affine curves (Theorem \ref{thm_reductiontoaffinecurves}).

\subsection*{Latest development in NSOP$_1$ theories} The classes of NSOP$_1$ and NSOP$_2$ theories were defined by D\v{z}amonja and Shelah in~\cite{DS04} in order to extend downward the $(\NSOP{n})_{n\geq 3}$ hierarchy. Since the Kim-Pillay style characterisation of NSOP$_1$ theories developed by Chernikov and Ramsey in \cite{CR16}, and shortly after, a suitable notion of forking --Kim-forking-- that behaves well in this context \cite{KR20}, NSOP$_1$ theories received a considerable amount of interest from various authors, in various directions: developing further the abstract theory \cite{Ram19,CKR20,DKR22, DH21, Bos22} (even outside the first-order context \cite{HK21,DK22,Kam21}) and finding new and enlightening examples of NSOP$_1$ theories \cite{KrR18,Dob20,dE21A,dE21B,dEKN21, BdEV22}, ripe for model-theoretic treatment. Various versions of ranks have been developed in the NSOP$_1$ context (e.g. \cite{CKR20}), with more recently the promising family of local ranks developed by Dobrowolski and Hoffman \cite{DH21}, and used for answering several open questions about the theory of vector spaces with a generic bilinear form. We will study those ranks in ACFH in further work. A particularly important recent breakthrough is the proof by Mutchnik \cite{Mut22} that NSOP$_1$ theories are the same as NSOP$_2$ theories, a question already asked in \cite{DS04}. In particular, by results of Malliaris and Shelah \cite{MS17}, NSOP$_1$ theories coincide with theories which are non-maximal for the $\triangleleft^*$-order, and can also be characterized by having few higher formulas.

The theory of Kim-forking and Kim-independence have been also developed further, and the Kim-Pillay style characterisation has been considerably polished (even extended to the positive logic setting in \cite{DK22}), to have a very workable version, see Fact \ref{fact:KPnsop1}. Essentially, a theory is NSOP$_1$ if and only if there is an independence relation (over models) satisfying all the axioms of the classical Kim-Pillay theorem, with the exception of \ref{BMON}, and if so, the independence relation is Kim-independence relation over models. Further, the theory is simple if and only if Kim-independence satisfies \ref{BMON}. We show that ACFH is NSOP$_1$ and not simple using this criterion, and give a description of Kim-independence. As in ACFA, we prove that ACFH satisfies a generalised version of \ref{INDTHM}: $n$-amalgamation (Theorem \ref{thm_n-AM}). 

\subsection*{Existence or not existence, that is the question} The reason for considering independence relations \textit{over models} in the Kim-Pillay style characterisation of NSOP$_1$ theories is intrinsic to the original definition of Kim-dividing, that is, dividing with respect to an invariant Morley sequence, which only exists in general over models. On the other hand, in every known example, Kim-independence had another definition, meaningful over every set, thus a need for extending the results of \cite{KR20} to arbitrary sets became apparent. It was already known in \cite{KR20} that in an NSOP$_1$ theory, Kim-dividing (over a model) is equivalent to dividing with respect to a forking-Morley sequence, hence in \cite{DKR22}, the authors changed the definition of Kim-dividing to dividing with respect to a forking-Morley sequence, and generalized the whole theory of Kim-independence over arbitrary sets under the assumption that forking-Morley sequences exist over every set, i.e. every set is an extension base for forking, otherwise known as \textit{forking satisfies \ref{EX}}\footnote{An example of an NSOP$_1$ theory that does not satisfy the existence axiom was recently announced by Mutchnick \cite{mutchnik2024mathrmnsop1theoryexistenceaxiom}.}.

We currently do not know whether forking independence satisfies \ref{EX} in ACFH, although we strongly believe that it is true. We intend to tackle this question in a subsequent paper. However, we proved the following rather intriguing result (Theorem \ref{thm_wei}).

\begin{alphatheorem}
If forking satisfies \ref{EX} in ACFH, then ACFH eliminates imaginaries.
\end{alphatheorem}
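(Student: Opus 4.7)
The plan is to reduce elimination of imaginaries in ACFH to \emph{weak} elimination of imaginaries (WEI), and to establish WEI using the independence theorem over algebraically closed sets, which is precisely the tool that \ref{EX} unlocks.

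\textbf{Step 1 (reduction to WEI).} Since ACFH expands ACF and the latter has full elimination of imaginaries (Poizat), every finite set of real tuples is coded by a real tuple in every model of ACFH. Hence it suffices to show that every imaginary $e$ is interalgebraic (in $\MM^{\eq}$) with a real tuple, equivalently $e \in \acl^{\eq}(\acl(e) \cap \MM)$.

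\textbf{Step 2 (three-amalgamation over algebraically closed sets).} The hypothesis that forking satisfies \ref{EX}, combined with Theorem \ref{thm_ACFH_NSOP1}, places ACFH inside the framework of \cite{DKR22}: Kim-independence satisfies symmetry, extension, local character, and the independence theorem over arbitrary algebraically closed sets, not just models. In particular, the three-amalgamation property is available over any algebraically closed real set.

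\textbf{Step 3 (proof of WEI).} Fix an imaginary $e$, set $C = \acl(e) \cap \MM$, and pick a real tuple $a$ with $e \in \dcl^{\eq}(a)$. Suppose toward a contradiction that $e$ has unboundedly many conjugates over $C$. Choose Kim-independent realizations $a_1, a_2$ of $\tp(a/C)$ over $C$, and let $e_i$ be the image of $e$ computed from $a_i$ via the $\emptyset$-definable function witnessing $e \in \dcl^{\eq}(a)$; by assumption we can arrange $e_1 \neq e_2$ while $e_1, e_2 \equiv_C e$. Applying the independence theorem over $C$ to the types $\tp(a_1/C e_2)$ and $\tp(a_2/C e_1)$ produces a tuple $a'$ realizing both, whence the definable computation of $e$ from $a'$ yields simultaneously $e_1$ and $e_2$, contradicting $e_1 \neq e_2$.

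The main obstacle is the delicate management of the amalgamation base, namely the passage between the real algebraic closure $C$ and the imaginary enlargements $\acl^{\eq}(C e_1)$, $\acl^{\eq}(C e_2)$. The essential technical point is that Kim-independence of real tuples over $C$ can be lifted to independence over these imaginary extensions via the independence theorem made available in Step 2, in the spirit of Chatzidakis--Hrushovski's treatment of ACFA. This also requires verifying that the description of Kim-independence in ACFH (established earlier in the paper over models) transfers faithfully to algebraically closed sets once \ref{EX} holds. Once WEI is secured, full elimination of imaginaries follows by the reduction of Step 1.
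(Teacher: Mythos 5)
Your high-level decomposition (finite imaginaries coded $\Rightarrow$ reduce EI to WEI, then prove WEI using independence) is the right one and matches the paper. But the two places you locate the difficulty are wrong, and the hand-waved "main obstacle" in Step 3 is exactly where the paper's proof does all its work, so there is a genuine gap.

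First, Step 2 misidentifies what \ref{EX} buys. The paper already has the independence theorem for $\indi\theta$ over arbitrary $\cl_\theta$-closed \emph{real} sets without assuming \ref{EX}: this follows from the $3$-amalgamation Theorem \ref{thm_n-AM} (see the remark after Theorem \ref{thm_ACFH_NSOP1}). So the independence theorem over algebraically closed real bases is free, and \ref{EX} is not needed for it. What \ref{EX} is actually used for is the \emph{imaginary extension property}: given an imaginary $e$, with $E^* = \acl^\eq(e)$ and $E = E^* \cap \MM$, one must be able to find $a' \equiv_{E^*} a$ with $a' \indi\theta_E b$. This is precisely condition (1) of the Conant--Kruckman criterion (Fact \ref{fact_conantkruckman}), and it is the content of Proposition \ref{prop:imextension}.

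Second, your Step 3 does not establish WEI. The amalgamation you describe (``apply the independence theorem over $C$ to $\tp(a_1/Ce_2)$ and $\tp(a_2/Ce_1)$'') is over bases that include the imaginary $e_i$, which is exactly the situation the available independence theorem (over real $\cl_\theta$-closed sets) does \emph{not} cover; and the conclusion drawn (``$a'$ yields simultaneously $e_1$ and $e_2$'') does not follow, since $e_1 = f(a_1)$ is not fixed by automorphisms over $Ce_2$. You acknowledge the ``delicate management of the amalgamation base'' but defer it to Chatzidakis--Hrushovski without detail: that deferred step \emph{is} the proof. The paper handles it via Proposition \ref{prop:imextension}, which uses Neumann's lemma (Fact \ref{neumann}) to produce $b \equiv_{E^*} a$ with $\acl_\theta(Ea) \cap \acl_\theta(Eb) = E$, then builds a forking Morley sequence $(a_i)_{i<\omega}$ (this is where \ref{EX} is essential, since forking Morley sequences need not exist over arbitrary sets without it), extracts an indiscernible, and uses elimination of imaginaries in the ACF reduct to pull the canonical base down into $E$, yielding $a_2 \indi\theta_E a_0$. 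None of this appears in your sketch, so the proposal as written does not prove the theorem.
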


It seems unlikely to the author that there should not exist a proof of elimination of imaginaries for ACFH that does not use \ref{EX} for forking.

\subsection*{Kernels} One of the characteristic features of ACFH is that the multiplicative map defines a family of multiplicative subgroups, the kernel of every definable endomorphism. We carry a precise study of some kernels in section \ref{section_modelsofACFH}. In a model $(K,\theta)$ of $T$, for every polynomial $P\in \Z[X]$ there is an associated endomorphism $P(\theta)$ of $K$. Thus, we identify a ring of definable multiplicative endomorphism, which we denote $\Z[\theta]$. In a model of ACFH, this ring is isomorphic to the polynomial ring $\Z[X]$ (which is of course not the case in general). We prove (Theorem \ref{thm_ker_generic}, Proposition \ref{prop_iterations}, Corollary \ref{cor_kernelspseudofiniteACFH}):
\begin{alphatheorem}
Let $(K,\theta)$ be a model of ACFH. Let $\phi\in \Z[\theta]$. Then $\ker\phi$ is a generic multiplicative subgroup of $K^\times$ and, as a pure group, $\ker\phi$ is \textit{pseudofinite-cyclic}, i.e. elementary equivalent to an ultraproduct of finite cyclic groups. Further, $(K,\theta^{(n)})$ is also a model of ACFH.
\end{alphatheorem}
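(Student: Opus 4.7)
The proof breaks into three independent claims. For the first (genericity), my plan is to verify that $(K,\ker\phi)$ is e.c.\ in the class of fields with a distinguished multiplicative subgroup---equivalently, a model of $\mathrm{ACFG}^\times$ from \cite{dE21B}. By the axiomatisation there, it suffices to show that every free affine m-variety $V\subseteq K^n$ defined over a finitely generated subfield meets $(\ker\phi)^n$ in $K$. The key step is to encode the constraint $\phi(x)=1$ as a multiplicative relation among $x,\theta(x),\dots,\theta^d(x)$ (where $d=\deg\phi$), combine it with $V$ into a single free m-variety for $\theta$, and apply the axiomatisation of $\ACFH$ (Theorems~\ref{thm_genhom_mult} and~\ref{thm_reductiontoaffinecurves}) to obtain a point. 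The subtle point is preserving freeness after adjoining the kernel constraint, which should follow from $\phi\neq 0$ in $\Z[\theta]$.

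For the second claim, pseudofinite-cyclicity of $\ker\phi$ as a pure abelian group, the plan is to compute its Szmielew invariants and match them against those of an ultraproduct of finite cyclic groups. Since $K^\times[m]$ is always cyclic, $\ker\phi[m]$ is automatically cyclic for every $m$, which already pins down much of the invariant data. The main technical hurdle---which I expect to be the hardest part of the theorem---is computing the indices $[\ker\phi:m\cdot\ker\phi]$ and the remaining divisibility/Ulm-type invariants; these depend on how $\phi$ acts on the torsion and divisibility structure of $K^\times$ in the generic setting. Here the genericity established in Part~1 is crucial: it ensures that enough multiplicative configurations are realised inside $\ker\phi$ to force exactly the profile of a pseudofinite cyclic group.

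For the third claim, the plan is to check directly that $(K,\theta^{(n)})$ satisfies the e.c.\ characterization of Theorem~\ref{thm_genhom_mult}, and is therefore a model of $\ACFH$. Given an affine m-curve free of multiplicative dependences with respect to $\theta^{(n)}$, I would interpret it as an m-variety involving the iterates $\theta^{nk}$, then pass to the corresponding m-variety in the $\theta$-language by introducing coordinates for the relevant intermediate iterates $\theta(x),\dots,\theta^{n-1}(x)$, and apply the axioms for $(K,\theta)$. The only bookkeeping issue is to verify that freeness of the original $\theta^{(n)}$-variety translates into freeness of the lifted $\theta$-variety, which should be straightforward since the newly introduced coordinates do not create any new multiplicative relations. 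I expect this to be the most routine of the three parts.
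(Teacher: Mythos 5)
Your Part 1 plan rests on a premise the paper explicitly refutes. You propose to show that $(K,\ker\phi)$ is a model of $\ACFG^\times$, i.e.\ an e.c.\ algebraically closed field with a distinguished multiplicative subgroup. But the theory $\ACFG^\times$ requires the distinguished subgroup to satisfy $G = \vect{G}^\dv$, and the paper shows (Remark~\ref{rk_ker_notrootclosed}) that $\ker\phi \neq \vect{\ker\phi}^\dv$ whenever $\theta$ is surjective: pick $b$ with $\theta(b) = \zeta$ a primitive $n$-th root of unity and note $b^n\in\ker\theta$ but $b\notin\ker\theta$. So $(K,\ker\phi)$ is \emph{not} a model of $\ACF_G^\times$. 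This is exactly why the paper introduces a strictly weaker notion of ``generic subgroup'' (Definition preceding Theorem~\ref{thm_ker_generic}), in which the group-theoretic closure $\vect{\cdot}$ is used in place of the divisible closure $\vect{\cdot}^\dv$. The actual proof of genericity in the paper goes through Theorem~\ref{thm_genpoly}: one picks targets $\delta_i$ with $\delta_i=1$ for $i\le k$ and $(\delta_j)_{j>k}$ multiplicatively independent over $P(\theta)(C)$, then applies the generic solvability axiom to find a point of the given $n$-free set whose first $k$ coordinates land in $\ker P(\theta)$ and whose remaining coordinates avoid it. Your encoding of $\phi(x)=1$ as a multiplicative relation among the iterates $\theta^j(x)$ is in the right spirit and reappears inside the proof of Theorem~\ref{thm_genpoly} (Observation~\ref{obs_keyformulaiteration}), but the goal it should serve is this weaker genericity statement, not $\ACFG^\times$-membership.

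Your Part 2 plan has a milder but still significant disconnect from what the theorem actually needs. You assert that ``the genericity established in Part 1 is crucial,'' but the paper's proof of pseudofinite-cyclicity uses no genericity at all: it relies only on the fact that $\phi = P(\theta)$ is \emph{surjective} (Corollary~\ref{cor_def_endo_surj}). The mechanism is the d'Elbée--Herzog criterion (Fact~\ref{fact_herzog}): an abelian group $G$ is pseudofinite-cyclic iff $\abs{\mu_p(G)} = \abs{G/\pw^p(G)} \le p$ for all primes $p$. Given a surjective endomorphism $\phi$ of a divisible group $D$ with finite torsion, Lemmas~\ref{lm_herzog_inequ1} and~\ref{lm_herzog_ineq2} show $\abs{\ker\phi/\pw^n(\ker\phi)} = \abs{\mu_n(\ker\phi)}$ by a short counting argument, and the bound $\le p$ is automatic for $K^\times$. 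So the statement $\ker\phi$ is pseudofinite-cyclic holds for \emph{any} field with divisible multiplicative group and \emph{any} surjective multiplicative endomorphism (Theorem~\ref{thm_ker_surj_endo_psfc}); the only input from ACFH is surjectivity of $P(\theta)$. Your plan to compute Szmielew invariants directly would reproduce the Herzog criterion's proof (which does exactly that, in the appendix) but without the key intermediate reduction to a two-invariant test; and it would not factor out the irrelevance of genericity.

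Your Part 3 plan is genuinely plausible and takes a different route from the paper. The paper proves Proposition~\ref{prop_iterations} via Lemma~\ref{lm_ext_iterate}, which builds, from any $T$-extension $(L,\zeta)$ of $(K,\theta^{(n)})$, a common $T$-extension $(F,\theta_F)$ with $\theta_F\upharpoonright K = \theta$ and $\theta_F^{(n)}\upharpoonright L = \zeta$ by taking $n$ algebraically independent copies of $L$, factoring out the divisible group $K^\times$, and defining $\theta_F$ to cycle through the copies; then existential closedness of $(K,\theta)$ transfers by syntactic substitution $\theta \mapsto \theta^{(n)}$. Your proposal to instead lift a free $\theta^{(n)}$-m-variety to a $\theta$-m-variety by introducing coordinates for the intermediate iterates is in the same spirit as the technique in Theorem~\ref{thm_genpoly}, and the needed freeness preservation is exactly what Observation~\ref{obs_keyformulaiteration} certifies for the iterate formula $\Sigma$; so this could be made to work, though the paper's extension-lemma route sidesteps the bookkeeping around whether the lifted variety still projects m-generically onto the right m-variety and is arguably cleaner.
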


 We conjecture that $\Z[\theta]$ is the whole ring of definable endomorphism in any model of ACFH.

\subsection*{Pseudofinite-cyclic groups} The ubiquity of pseudofinite-cyclic groups in ACFH is part of a more general phenomenon. The notion of pseudofinite-cyclic groups did not get much attention in the literature. In \cite{Kes14}, Kestner proves that pseudofinite abelian groups are exactly the groups where one can definably assign a notion of measure and dimension to each definable set in the sense of Macpherson-Steinhorn (MS-measurable, \cite{MS08}). In Subsection \ref{subsection_pseudofinite}, we carry out an analysis of pseudofinite-cyclic groups in our context and, using a result joint with Herzog, we prove a quite surprising criterion: for any field $K$ with $K^\times$ divisible if $\phi:K^\times\rightarrow K^\times$ is a surjective endomorphism, then $\ker\phi$ is pseudofinite-cyclic (Theorem \ref{thm_ker_surj_endo_psfc}). In particular, such endomorphisms induce an isomorphism between $K^\times/\ker\theta$ and $K^\times$. To apply this criterion in general, the difficult step is to actually exhibit a surjective multiplicative endomorphisms of a field with an infinite kernel. It turns out that they are abundant in models of ACFH, as any element of $\Z[\theta]\setminus \Z$ is surjective with infinite kernel. Pseudofiniteness of the kernel of a generic multiplicative map raises a very natural question: can a model of ACFH be obtained as a limit of `natural' multiplicative maps with finite kernels? Is the ultraproduct of models of $T$ of the form $(K_n, x\mapsto x^{k_n})$ a model of ACFH, for some choice of algebraically closed field $K_n$ and sequence $(k_n)_n$ of integers? We answer those questions in the negative in Subsection \ref{sub_natural_model}, the reason being that the kernels obtained by this process lack the genericity associated with models of ACFH. We also give some insights concerning variants of the construction more suitable to study the theory of ultraproduct of models of $T$ of the form $(K_n, x\mapsto x^{k_n})$, see Remark \ref{rk_towardtheMTofCtheta}.

\subsection*{Towards more general constructions.} The theory ACFH is the archetypical example of more general constructions that could be developed further. Let $K$ be an algebraically closed field and $G_1\seq K^n$ and $G_2\seq K^m$ denote two affine algebraic groups defined over an algebraically closed field $k_0\seq K$. Let $T_{G_1,G_2}$ be the theory in the language of fields extended by constants for $k_0$, predicates and function symbols for the groups $G_1$ and $G_2$, and $n$-ary function symbols $\theta_1,\ldots, \theta_m$ so that $\theta = (\theta_1,\ldots,\theta_m)$ defines a group homomorphism between $G_1$ and $G_2$.
\begin{question}\mbox{$~$}
\begin{enumerate}
    \item Describe e.c. models of $T_{G_1,G_2}$, do they form an elementary class?
    \item Is the class of e.c. models of $T_{G_1,G_2}$ NSOP$_1$?
\end{enumerate}
\end{question}
ACFH is the model companion of $T_{G_1,G_2}$ for $G_1 = G_2 = K^\times$ and $\textit{Acfe}$ is the class of e.c. models of $T_{G_1,G_2}$ for $G_1= K^+$ and $G_2 = K^\times$. So both questions are answered in those particular cases. One can already speculate, by following the recipe of ACFH, that the class of e.c. models of $T_{G_1,G_2}$ is elementary in the following case, for $G_2$ divisible and abelian:
\begin{enumerate}
    \item[(a)] If $G_1 = K^+$ and $\car(K) = p>0$ (note that $T_{G_1,G_2}$ may have trivial models: if $G_2 = K^\times$, see \cite{HK21});
    \item[(b)] If $G_1 = K^\times$;
    \item[(c)] If $G_1$ is an abelian variety without complex multiplication and $\car(K) = 0$.
\end{enumerate}
One has first to convince themselves that the elementarity of the class of e.c. models of $T_{G_1,G_2}$ only depends on the group $G_1$ (by checking the proof of Theorem \ref{thm_genhom_mult}), then use \cite[Theorem 5.2 $(H_4)$]{dE21A} for (a) ; Fact \ref{fact_multi_free_r_firstorder} for (b) and \cite[Fact 2]{dE21C} for (c). On the other hand, we expect a negative result if $G_1 = K^+$ if $\car(K)=0$, regardless of $G_2$, by an argument similar to the one used in \cite[Section 5.6]{dE21A} or \cite{HK21}. Nonetheless, the approach of positive model theory should yield answers to the second question in the case of a lack of model-companion. It is worth mentioning that if the existentially closed exponential fields do not form an elementary class \cite{HK21}, the existentially closed logarithmic fields ($G_1 = K^\times$ and $G_2 = K^+$) should form an elementary class.

It is the conviction of the author that, in general, the `generic subgroup' approach can be turned into a `generic endomorphism' approach. For instance, it could be considered to replace ACF with RCF in the definition of $T$. The tools from o-minimality that were used to axiomatise generic multiplicative groups in \cite{BG20} are very much likely to yield the existence of a model-companion for the expansion of RCF by a multiplicative map --let us call it RCFH. Then RCFH should inherit TP$_2$ and SOP so should be quite wild and NATP \cite{AKL21} seems to be the next best thing that one can expect for RCFH. This train of thought could lead to potentially interesting new concepts, such as dense-codense endomorphism \cite{BdEV22} or $R$-module endomorphism \cite{dEKN21}. Connections with groups with the Mann property \cite{vdDG05} are also conceivable. We see the theory ACFH as a template for a new form of generic structures.

It is rare to consider generic expansions without mentioning the machinery of interpolative fusions \cite{KTW21}. We believe that ACFH could be seen as the interpolative fusion of ACF with a hypothetic theory $T$ which would be the model-companion of the expansion of the theory $\Tm$ of the multiplicative group of an algebraically closed field by a group endomorphism, provided such a theory $T$ exists. The existence of $T$ should easily follow from an approach similar to the one taken in this paper. To our knowledge, such theory $T$ has never been studied.

\subsection*{Acknowledgements}
The author is grateful to Lotte Kestner and Guiseppina Terzo for interesting discussions. The author is very grateful to Ivo Herzog for various discussions on this project, especially concerning pseudofinite abelian groups and to Zoé Chatzidakis for her general support. Finally, the author is very grateful to the anonymous referee for very valuable comments.

\section*{Preliminaries and notations}

Let $\LL = \LLf\cup\set{\theta} $ where $\theta$ is a unary function. We define the following three $\LL$-theories
\begin{itemize}
    \item $T_0$ is the $\LL$-theory of all fields where $\theta$ is a multiplicative map;
    \item $T_1$ is the $\LL$-theory of all fields with divisible multiplicative subgroup, where $\theta$ is a multiplicative map;
    \item $T$ is the $\LL$-theory of all algebraically closed fields where $\theta$ is a multiplicative map.
\end{itemize}
A model $(K,\theta)$ of $T$ is such that $K$ is a model of ACF and $\theta(xy) = \theta(x)\theta(y)$. Then $\theta:K^\times\rightarrow K^\times$ is a multiplicative group endomorphism.

Let $\LLm = \set{\cdot,^{-1},1}$. Recall that for an algebraically closed field $K$ the $\LLm$-theory of $(K^\times, \cdot, ^{-1},1)$ is well-understood, it is obtained by adding to the theory of divisible abelian groups the description of the torsion, depending on the characteristic $p$ of $K$:
\begin{itemize}
\item if $p>0$: $\set{\exists^{=n} y \ \ y^n = 1\mid n\in \N\setminus p\N}\cup \set{\exists^{=1}y\ y^p = 1}$;
\item if $p=0$: $\set{\exists^{=n} y \ \ y^n = 1\mid n\in \N\setminus \set{0}}.$
\end{itemize}
In particular, the theory of $K^\times$ in $\LLm$ is strongly minimal. We fix an algebraically closed field $K$ and the $\LLm$-theory $\Tm$ of its multiplicative group. For a set $C\seq K^\times$, we denote by $\vect{C}$ the group generated by $C$, and $\vect{C}^\dv$ the divisible closure of $\vect C$. The operator $\vect{.}^\dv$ is the algebraic closure in the sense of $\Tm$, it defines a modular pregeometry on $K^\times$. A tuple $(a_1,\dots, a_r)\in K^n$ is called \textit{multiplicatively independent over some $C\seq K$} if for all $l_1,\ldots,l_r\in \Z$ we have $a_1^{l_1} \ldots  a_r^{l_r}\in C$ implies $l_1=\ldots = l_r = 0$. For any subgroup $C\subseteq K^\times$, the smallest $N\in \N$ such that $a^N\in C$ is called the \emph{order of $a$ over $C$}, if it exists. If there is no such $N\in \N$, the order of $a$ over $C$ is infinite. 

Note that we deal with groups which are meant to be subgroups of the multiplicative group of a field, hence we denote them multiplicatively. If $A,B,C$ are abelian groups such that $A\cap B = C$, we write $\vect{AB} = A\odot_C B$. If $C = \set{1}$, we simply write $A\odot B$.

We use the following notations, for any group $A$:
\begin{itemize}
    \item $\mu_n(A) = \set{a\in A\mid a^n = 1}$;
    \item $\mu_{p^\infty}(A) = \bigcup_n \mu_{p^n}(A)$;
    \item $\mu_\infty(A) = \bigcup_n \mu_n(A)$.
\end{itemize}
For an algebraically closed field $K$, $\mu_n(K)$ is the group of $n$-th roots of unity; $\mu_{p^\infty}$ is a $p$-Prüfer group for all $p\neq \car(K)$ and $\mu_\infty(K)$ is the group of all roots of unity. The following is standard: 
\[\mu_\infty(K) \cong \bigodot_{p\neq \car(K)} \mu_{p^\infty}(K).\]
Similarly, there exists a divisible and torsion-free (hence a $\Q$-vector space) group $V$ such that 
\[K^\times \cong  \mu_{\infty}(K) \odot V\]

An application of Zorn's lemma yields the following useful characterisation of divisible abelian groups, see e.g. Kaplanski \cite{Kap18}. We will generally refer to it as `by divisibility'. 
\begin{fact*}\label{fact_fuchs} 
Let $D$ be an abelian group, the following are equivalent.
\begin{enumerate}
    \item $D$ is divisible;
    \item for any abelian group $G\supseteq D$, there exists a subgroup $H$ of $G$ such that $ G =D\odot H$ ($D$ is a \textit{direct factor} in $G$);
    \item for any abelian groups $H\seq G$ and homomorphism $h:H\rightarrow D$ there exists a group homomorphism $h':G\rightarrow D$ such that $h'\upharpoonright H = h$.
\end{enumerate}
\end{fact*}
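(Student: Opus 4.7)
The plan is to prove the cycle $(1) \Rightarrow (3) \Rightarrow (2) \Rightarrow (1)$, where the main content is a Zorn's lemma argument for $(1) \Rightarrow (3)$ — the classical equivalence between divisibility and injectivity for abelian groups — and the fact that every abelian group embeds into a divisible one for $(2) \Rightarrow (1)$.

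For $(1) \Rightarrow (3)$, I apply Zorn's lemma to the poset of pairs $(H', h')$ with $H \seq H' \seq G$ and $h' : H' \to D$ extending $h$, ordered by extension; the union of a chain is again such a pair, so a maximal element $(H^*, h^*)$ exists. To show $H^* = G$, suppose for contradiction $g \in G \setminus H^*$. If $\vect{g} \cap H^* = \{1\}$, then $H^*\vect{g} = H^* \odot \vect{g}$ and I extend $h^*$ by setting $h^{**}(g) = 1$. Otherwise let $n \geq 1$ be the smallest positive integer with $g^n \in H^*$; by divisibility of $D$ choose $d \in D$ with $d^n = h^*(g^n)$ and define $h^{**}(xg^k) = h^*(x)d^k$ on $H^*\vect{g}$, which is well-defined by the minimality of $n$ (if $x_1 g^{k_1} = x_2 g^{k_2}$ then $n \mid k_1 - k_2$, and $d^{k_1 - k_2} = h^*(g^{k_1-k_2})$ gives the required compatibility). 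Either case contradicts maximality, so $H^* = G$.

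For $(3) \Rightarrow (2)$, I apply $(3)$ to the identity $\Id_D : D \to D$ to obtain a homomorphism $h : G \to D$ extending it, and set $H = \ker h$. Then $D \cap H = \{1\}$ since $h$ restricts to the identity on $D$, and every $g \in G$ factors as $g = h(g) \cdot (g \cdot h(g)^{-1})$ with first factor in $D$ and second in $H$, yielding $G = D \odot H$. For $(2) \Rightarrow (1)$, I embed $D$ into a divisible abelian group $G$: writing $D = F/R$ for $F$ a free abelian group and taking $G := (F \otimes_\Z \Q)/R$, which is divisible as a quotient of a $\Q$-vector space, we have $D = F/R \hookrightarrow G$; then $(2)$ yields $H \seq G$ with $G = D \odot H$, whence $D \cong G/H$ is divisible as a quotient of a divisible group.

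The only step requiring real care is the extension in $(1) \Rightarrow (3)$ when $\vect{g} \cap H^* \neq \{1\}$: this is the unique place where divisibility of $D$ is genuinely used, and the well-definedness of $h^{**}$ must be checked using the minimality of $n$. The remaining implications are essentially formal, modulo recalling the standard embedding of an abelian group into a divisible one.
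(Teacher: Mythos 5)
Your proof is correct, and it follows exactly the standard Zorn's lemma argument that the paper alludes to (the paper states this as a Fact, citing Kaplanski, with the remark that it is "an application of Zorn's lemma"). The one place requiring care — well-definedness of the extension in $(1)\Rightarrow(3)$ when $\vect g \cap H^* \neq \{1\}$ — is handled correctly via minimality of $n$, and the formal implications $(3)\Rightarrow(2)$ (kernel of a retraction onto $D$) and $(2)\Rightarrow(1)$ (embed $D$ into a divisible group and pass to the quotient) are sound.
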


For any fields $E,F,K$ living in a bigger field we write $F\indi\alg_E K$ if $F$ and $K$ are algebraically independent over $E$. We write $(KF)^\alg$ for the algebraic closure of the compositum $KF$ of $K$ and $F$.

\textbf{Convention.} For any fields $F,K$, we extend the notation $\vect{F,K}$ to mean the product set, and most of the time we identify it with the group $\vect{F^\times,K^\times}$. Similarly, we sometimes use $F$ to mean $F^\times$ when it is clear from the context that we are dealing with the multiplicative group of those fields (e.g. we write $F\odot_E K$ instead of $F^\times \odot_{E^\times} K^\times$). We will also often identify the notions of multiplicative map $K\rightarrow K$ and multiplicative homomorphism $K^\times \rightarrow K^\times$.

\section{Amalgamation results for $T$}

We start by some amalgamation results for the theory $T$. 

\subsection{2-amalgamation}

\begin{lemma}\label{lm_uniqueextendoproduct}
Let $A,B,C,D$ be abelian groups such that $A\cap B = D$, and $D$ divisible. Let $\theta_A: A\rightarrow C$, $\theta_B:B\rightarrow C$ and $\theta_D:D\rightarrow C$ be group homomorphisms, such that $\theta_A$ and $\theta_B$ extend $\theta_D$, that is: $\theta_A\upharpoonright D = \theta_B\upharpoonright D = \theta_D$. Then there exists a unique homomorphism $\theta:A\odot_D B\rightarrow C$ such that $\theta$ extends $\theta_A$ and $\theta_B$. We denote it $\theta = \theta_A\odot_D\theta_B$.
\end{lemma}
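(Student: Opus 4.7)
The plan is to proceed directly from the fact that every element of $A \odot_D B = \vect{A, B}$ can be written as $ab$ with $a \in A$ and $b \in B$, since the ambient group is abelian. This already dictates the formula $\theta(ab) = \theta_A(a)\theta_B(b)$, which takes care of uniqueness at once: any homomorphism extending both $\theta_A$ and $\theta_B$ must take this value.

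For existence, I would define $\theta(ab) := \theta_A(a)\theta_B(b)$ and reduce the entire argument to checking well-definedness. Suppose $ab = a'b'$ with $a, a' \in A$ and $b, b' \in B$. Then $a(a')^{-1} = b'b^{-1} \in A \cap B = D$; call this element $d$. Since $\theta_A\upharpoonright D = \theta_B\upharpoonright D = \theta_D$, we get
\[
\theta_A(a) = \theta_A(a')\theta_D(d) \quad \text{and} \quad \theta_B(b') = \theta_B(b)\theta_D(d),
\]
so that $\theta_A(a)\theta_B(b) = \theta_A(a')\theta_D(d)\theta_B(b) = \theta_A(a')\theta_B(b')$. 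Multiplicativity of $\theta$ on $A\odot_D B$ follows immediately from multiplicativity of $\theta_A$ and $\theta_B$ and the commutativity of $C$, and by construction $\theta$ restricts to $\theta_A$ (resp. $\theta_B$) on $A$ (resp. $B$) by taking $b = 1$ (resp. $a = 1$).

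There is no real obstacle here; the only step that requires care is the well-definedness, and it is settled by the three-line computation above. Notice that divisibility of $D$ is not actually needed in this direct approach --- it is a natural hypothesis in the surrounding context but does not intervene in the algebra. If one prefers a more structural argument, divisibility lets one write $A = D \odot A'$ and $B = D \odot B'$ via the direct-factor property recalled in the preliminaries, so that $A \odot_D B = D \odot A' \odot B'$, and one defines $\theta$ componentwise as $\theta_D \odot (\theta_A\upharpoonright A') \odot (\theta_B\upharpoonright B')$. Either route yields the lemma.
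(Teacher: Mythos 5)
Your proof is correct, and it takes a genuinely different and more elementary route than the paper. The paper invokes divisibility of $D$ to write $A = D\odot H_A$, $B = D\odot H_B$ via the direct-factor property, so that $A\odot_D B = D\odot H_A\odot H_B$, and then defines $\theta$ on the unique triple decomposition. You instead work directly with the (non-unique) two-factor presentation $ab$ and verify well-definedness by hand: if $ab = a'b'$ then $a(a')^{-1} = b'b^{-1}$ lies in $A\cap B = D$, and the agreement of $\theta_A$, $\theta_B$ with $\theta_D$ on $D$, together with commutativity of $C$, makes the formula $\theta(ab) = \theta_A(a)\theta_B(b)$ unambiguous. This is the standard pushout argument for abelian groups, and as you correctly observe it does not use divisibility of $D$ at all — a genuine strengthening of the lemma as stated. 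The paper's route is cleaner in the sense that the triple decomposition makes well-definedness trivial, but it is strictly less general; your version would apply verbatim in settings where $D$ is not a direct factor. Both arguments prove uniqueness the same way, from the forced formula on products $ab$.
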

\begin{proof}
As $D$ is a divisible subgroup of $A$ and $B$, $D$ is a direct factor of $A$ and $B$, hence there exists a subgroup $H_A$ of $A$ and a subgroup $H_B$ of $B$ such that $A = D\odot H_A$ and $B = D\odot H_B$. By modularity, $H_B\cap A = H_A\cap B = \set{1}$. Then $E := \vect{AB} = D\odot H_A\odot H_B$. Every element of $E$ can be written uniquely of the form $dab$ with $d\in D$, $g\in H_A$ and $h\in H_B$. Define $\theta_E(dgh) = \theta_D(d)\theta_A(g)\theta_B(h)$. By uniqueness of the decomposition of the form $dgh$, $\theta_E$ is a well-defined homomorphism of $C$. To prove uniqueness, let $\theta:E\rightarrow C$ be any homomorphism extending $\theta_A$ and $\theta_B$, then as every element has a unique expression of the form $dgh$, we have $\theta(dgh) = \theta(dg)\theta(h) = \theta_A(dg)\theta_B(h) = \theta_D(d)\theta_A(g)\theta_B(h) = \theta_E(dgh)$, so $\theta= \theta_E$.
\end{proof}

\begin{lemma}\label{lm_2aminclusion}
Let $(F,\theta_0)$ be a model of $T_1$ and $(K_1,\theta_1)$, $(K_2,\theta_2)$ be models of $T_0$ extending $(F,\theta_0)$ with $K_1\cap K_2 = F$. Let $L$ be any field extension of $K_1K_2$ such that $L^\times$ is divisible. Then there exists a multiplicative endomorphism $\theta:L^\times \rightarrow L^\times $ extending both $\theta_1$ and $\theta_2$, in other words, $(L,\theta)$ is a model of $T_1$ extending both $(K_1,\theta_1)$ and $(K_2,\theta_2)$.
\end{lemma}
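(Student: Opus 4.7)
The plan is to perform the amalgamation in two steps: first amalgamate over the product group $K_1^\times K_2^\times$ using Lemma~\ref{lm_uniqueextendoproduct}, and then use the divisibility of $L^\times$ to extend to all of $L^\times$ via the injectivity property of divisible abelian groups recalled in the Fact above.

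First I would set the multiplicative data straight. Since $K_1 \cap K_2 = F$, we have $K_1^\times \cap K_2^\times = F^\times$; and because $(F,\theta_0) \models T_1$, the intersection $F^\times$ is divisible. Viewing $\theta_1$ and $\theta_2$ as group homomorphisms into the ambient target $L^\times$, Lemma~\ref{lm_uniqueextendoproduct} applied with $A = K_1^\times$, $B = K_2^\times$, $D = F^\times$ and $C = L^\times$ produces a (unique) homomorphism
\[
\theta_{12} := \theta_1 \odot_{F^\times} \theta_2 \colon K_1^\times \odot_{F^\times} K_2^\times \longrightarrow L^\times
\]
extending both $\theta_1$ and $\theta_2$, where $K_1^\times \odot_{F^\times} K_2^\times = \vect{K_1^\times K_2^\times} \subseteq L^\times$.

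Next I would extend $\theta_{12}$ to all of $L^\times$. The hypothesis that $L^\times$ is divisible, combined with clause (3) of the Fact on divisible abelian groups (injectivity), guarantees that every homomorphism from a subgroup of $L^\times$ into $L^\times$ extends to a homomorphism on $L^\times$. Applied to the subgroup $\vect{K_1^\times K_2^\times}$ and the map $\theta_{12}$, this produces the desired group endomorphism $\theta \colon L^\times \to L^\times$. Extending by $\theta(0) = 0$, we obtain a multiplicative map $\theta \colon L \to L$, and by construction $\theta \upharpoonright K_i^\times = \theta_i$ for $i = 1,2$, so $(L,\theta)$ is a model of $T_1$ extending both $(K_1,\theta_1)$ and $(K_2,\theta_2)$.

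I do not expect a real obstacle here: the only point that requires care is to ensure the hypothesis of Lemma~\ref{lm_uniqueextendoproduct} is met, namely that the intersection of the source groups is divisible. This is exactly where the assumption $(F,\theta_0) \models T_1$ (rather than merely $T_0$) is used, and similarly the assumption that $L^\times$ is divisible is precisely what is needed for the second step. Uniqueness of the extension is of course lost after the second step, but uniqueness is not claimed in the statement.
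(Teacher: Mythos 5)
Your proof is correct and follows essentially the same two-step route as the paper: first apply Lemma~\ref{lm_uniqueextendoproduct} over the divisible intersection $F^\times$ to obtain $\theta_1\odot_{F}\theta_2$ on $\vect{K_1^\times,K_2^\times}$, then invoke divisibility (injectivity) of $L^\times$ to extend to all of $L^\times$. The only cosmetic difference is that you take the codomain in Lemma~\ref{lm_uniqueextendoproduct} to be $L^\times$ while the paper takes it to be $K_1\odot_F K_2$; both work.
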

\begin{proof}
First, as $K_1 \cap K_2  = F$ and as $F^\times$ is divisible, by Lemma \ref{lm_uniqueextendoproduct} there is a unique endomorphism $\theta_1\odot_F\theta_2:K_1 \odot_{F} K_2:\rightarrow K_1 \odot_{F} K_2\seq L$ extending both $\theta_1$ and $\theta_2$. As $L^\times$ is divisible, the endomorphism $\theta_1\odot_F\theta_2$ extends to and endomorphism $\theta:L^\times\rightarrow L^\times$.
\end{proof}

\begin{remark}
In the above, one can alternatively drop the assumption that $L^\times$ is divisible and assume that $(K_1,\theta_1)$ and $(K_2,\theta_2)$ are models of $T_1$. Indeed, in this case $\vect{K_1^\times,K_2^\times}$ is divisible, so it is a direct factor in $L^\times$, hence $\theta_1\odot_F \theta_2$ extends to $L^\times$.
\end{remark}

\begin{remark}[Full Existence]\label{rk_isomorphiccopy}
For any field isomorphism $\sigma: K_1\rightarrow K_2$, if $\theta_1$ is a multiplicative endomorphism of $K_1$, then $\theta_2$ defined by $\sigma\circ\theta_1\circ \sigma^{-1}$ is a multiplicative endomorphism of $K_2$. In particular, $\sigma$ becomes an $\LL$-isomorphism $\sigma: (K_1,\theta_1)\cong (K_2,\theta_2)$. It follows that for all $(K_1,\theta_1)\models T$ and $F\seq K_1$, there exists $(K_2,\theta_2)\cong_F (K_1,\theta_1)$ such that $K_1\indi\alg_F\ K_2$.
\end{remark}

\begin{lemma}[Amalgamation Property]\label{lm_2am_embeddings}
Let $(F,\theta_0),(K_1,\theta_1)$ and $(K_2,\theta_2)$ be three models of $T$ such that there exist $\LL$-embeddings $f_1:(F,\theta_0)\rightarrow (K_1,\theta_1)$ and $f_2:(F,\theta_0)\rightarrow (K_2,\theta_2)$. Then there exist a model $(L,\theta)$ of $T$ and two $\LL$-embeddings $g_1:(K_1,\theta_1)\rightarrow (L,\theta)$, $g_2:(K_2,\theta_2)\rightarrow (L,\theta)$ such that the diagram commutes.
\end{lemma}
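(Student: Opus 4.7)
The plan is to reduce to the situation of Lemma \ref{lm_2aminclusion} by arranging a concrete amalgamation diagram inside a large algebraically closed field, using Full Existence (Remark \ref{rk_isomorphiccopy}) to put the two factors in sufficiently general position, and then closing off the resulting structure algebraically to produce a model of $T$.

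First I would identify $F$ with its image $f_1(F)\subseteq K_1$ and transport the endomorphism, so that $(F,\theta_0)$ is literally an $\LL$-substructure of $(K_1,\theta_1)$. By Remark \ref{rk_isomorphiccopy} applied to $f_2$, I can replace $(K_2,\theta_2)$ by an $\LL$-isomorphic copy $(K_2',\theta_2')$ over $F$ realised inside a sufficiently saturated common extension, chosen so that $K_1\indi^{\alg}_F K_2'$. Since $F$ is algebraically closed and both $K_1$ and $K_2'$ are algebraically closed, the independence $K_1\indi^{\alg}_F K_2'$ forces $K_1\cap K_2' = F$ as fields (any element of the intersection is algebraic over $F$ by algebraic independence, hence lies in $F$).

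Now set $L = (K_1 K_2')^{\alg}$. This is an algebraically closed field, so $L^\times$ is divisible. All the hypotheses of Lemma \ref{lm_2aminclusion} are satisfied: $(F,\theta_0)\models T_1$ (as $F^\times$ is divisible), $(K_1,\theta_1)$ and $(K_2',\theta_2')$ are models of $T_0$ extending $(F,\theta_0)$, and $K_1\cap K_2' = F$. The lemma then yields a multiplicative endomorphism $\theta:L^\times\to L^\times$ extending both $\theta_1$ and $\theta_2'$, and $(L,\theta)$ is a model of $T$ since $L$ is algebraically closed. The inclusion $g_1:K_1\hookrightarrow L$ and the composition $g_2:K_2\xrightarrow{\cong}K_2'\hookrightarrow L$ are $\LL$-embeddings, and by construction $g_1\circ f_1 = g_2\circ f_2$ on $F$, so the diagram commutes.

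The only genuine content is getting the intersection right: the direct-product-style construction behind Lemma \ref{lm_uniqueextendoproduct} crucially requires $K_1\cap K_2 = F$ to fit together consistently, and this is exactly what Full Existence provides once one passes to an algebraically independent copy. Everything else---divisibility of $L^\times$, the extension of $\theta_1\odot_F\theta_2'$ to $L^\times$---is then automatic from the previous lemmas.
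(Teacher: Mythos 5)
Your proof is correct and follows essentially the same route as the paper: use Full Existence (Remark \ref{rk_isomorphiccopy}) to replace one factor by an algebraically independent copy over $F$, observe that the intersection collapses to $F$, and invoke Lemma \ref{lm_2aminclusion} inside the algebraic closure of the compositum. The only cosmetic difference is that the paper moves $K_1$ rather than $K_2$, and you spell out the step that $\indi^{\alg}$ over an algebraically closed $F$ forces $K_1\cap K_2'=F$, which the paper leaves implicit.
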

\begin{proof}
There exists a copy $K_1'$ of $K_1$ and a field isomorphism $\sigma:K_1\rightarrow K_1'$ over $F$ such that $K_1'\indi\alg _F K_2$. Now we consider that $(K_1',\theta_1'),(K_2,\theta_2)$ and $(F,\theta_0)$ are subfields of $(K_1'K_2)^\alg$ such that $K_1'\cap K_2 = F$. We conclude by Lemma \ref{lm_2aminclusion}.
\end{proof}

\subsection{Higher amalgamation}
 We proceed to define higher amalgamation, as it was defined in \cite{HK21}, which is not the classical definition, as we may find in e.g. \cite{CH99}. See also \cite[Appendix A]{dEKN21} for a discussion on that matter and related results. We denote by $\cP(n)$ (respectively $\cP^-(n)$) the set of subsets (resp. proper subsets) of $\set{1,\ldots,n}$. 

We state the definition of $n$-amalgamation for the theory $T$ but it makes sense for any theory for which a ternary relation is defined on every model.

\begin{definition}\mbox{$~$}
\begin{itemize}
    \item Let $n\ge 3$, and let $S$ be a subset of $\cP(n)$, closed by taking subsets. Let $(F_a,\theta_a)_{a\in S}$ be a collection of models of $T$ indexed by $S$, with embeddings $\iota_{ab}:(F_a,\theta_a)\rightarrow (F_b,\theta_b)$ whenever $a\seq b$ and $a\seq b\seq c$ implies $\iota_{ac} = \iota_{bc}\circ \iota_{ab}$. We say that $(F_a,\theta_a)_{a\in S}$ is an \textit{independent $S$-system} (over $F_\emptyset$) if for every $a\seq b$,  \[\iota_{ab}(F_a)\indi{\alg}_{\iota_{ab}((\iota_{ca}(F_c))_{c\subsetneq a}} (\iota_{db}(F_d))_{a\not\subseteq d\subseteq b}.\]
    We consider that $F_a\indi{\theta}_{(F_c)_{c\subsetneq a}} (F_d)_{a\not\subseteq d\subseteq b}$, as subsets of $F_b$, where we consider every embedding $F_a\to F_b$ as an inclusion.
    \item We say that $T$ has \emph{$n$-amalgamation} ($n\ge 3$) if any independent $\mathcal{P}^-(n)$-system can be completed to an independent $\mathcal{P}(n)$-system.
\end{itemize}
\end{definition}

It is a classical fact that ACF has $n$-amalgamation with respect to algebraic independence, for each $n\geq 3$, see \cite[Proposition A.3]{dEKN21} for a complete proof. Everything one needs to know about an independent $\cP(n)$-system is contained in the following fact from Shelah.

\begin{fact}[{\cite[Fact XII.2.5]{shelah1990classification}}]\label{fact shelah}
    Let $F=(F_s)_{s\subseteq n}$ be an independent $\mathcal{P}(n)$-system of algebraically closed fields, where every $F_s$ is considered as a subset of $F_n$, and let $t\subseteq \set{1,\ldots,n}$.
    For $i<m$ let $s(i)\in \mathcal{P}(n)$ and let $\vec{a}_i\in F_{s(i)}$.
    Assume that for some formula $\phi(\vec{x}_0,\dots,\vec{x}_{m-1})$ we have $F_n\vDash \phi(\vec{a}_0,\dots,\vec{a}_{m-1})$.
    Then there are $\vec{a}_i'\in F_{s(i)\cap t}$ such that $F_n\vDash \phi(\vec{a}_0',\dots,\vec{a}_{m-1}')$, and if $s(i)\subseteq t$, then $\vec{a}_i'=\vec{a}_i$.
\end{fact}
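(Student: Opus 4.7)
The plan is to induct on $|n \setminus t|$, the base case $t = n$ being trivial with $\vec{a}_i' = \vec{a}_i$. For the inductive step, pick $k \in n \setminus t$ and apply the inductive hypothesis with $t' = t \cup \set{k}$ in place of $t$: this yields a first replacement $\vec{a}_i^{(1)} \in F_{s(i) \cap t'}$ preserving $\phi$ with $\vec{a}_i^{(1)} = \vec{a}_i$ whenever $s(i) \subseteq t'$. It remains to prove a \emph{one-step} lemma: assuming each $\vec{a}_i$ lies in $F_{s(i)}$ with $s(i) \subseteq t \cup \set{k}$, we can replace those with $k \in s(i)$ by tuples in $F_{s(i) \setminus \set{k}}$ while preserving $\phi$. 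Partitioning indices as $I = \set{i : k \notin s(i)}$ (left fixed) and $J = \set{i : k \in s(i)}$, we must produce $\vec{a}_i' \in F_{s(i) \setminus \set{k}} \subseteq F_t$ for each $i \in J$.

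The core algebraic input of the one-step lemma is an independence statement extracted from the axioms of an independent $\cP(n)$-system by repeated use of transitivity of algebraic independence: writing $L$ for the algebraic closure of the compositum of the fields $F_{s(i) \setminus \set{k}}$, $i \in J$, the algebraic closure of the compositum of the fields $F_{s(i)}$, $i \in J$, is algebraically independent from $F_t$ over $L \subseteq F_t$. Applying quantifier elimination in ACF, the formula $\phi(\vec{a}_I, \vec{y}_J)$ defines a constructible set in affine space, and the independence above, combined with the fact that $(\vec{a}_i)_{i \in J}$ realizes $\phi$, implies that this constructible set is Zariski-dense in the algebraic locus $V$ of $(\vec{a}_i)_{i \in J}$ over the algebraically closed subfield $\acl(\vec{a}_I \cup L) \subseteq F_t$.

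The main obstacle is then producing a rational point of this constructible set whose components $\vec{a}_i'$ lie in the \emph{prescribed} subfields $F_{s(i) \setminus \set{k}}$ rather than merely in the ambient $\acl(\vec{a}_I \cup L)$. A non-empty Zariski open subset of a variety over an algebraically closed field $K$ always has $K$-rational points (by Nullstellensatz together with infiniteness of $K$), but the challenge here is to further decompose the joint rational point into components drawn from the prescribed subfields. I would handle this by a secondary induction on $|J|$: at each stage, fix one component $\vec{a}_j'$ sufficiently generically in $F_{s(j) \setminus \set{k}}$, which preserves both the joint independence structure and the Zariski-density of the constructible locus of the remaining tuples, yielding a strictly smaller instance of the same problem.
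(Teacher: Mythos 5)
The outer induction on $\lvert n\setminus t\rvert$ and the reduction to the ``one-step'' case $t' = t\cup\set{k}$ is sound, and so is the first part of the one-step argument (the independence claim $\acl(\bigcup_{j\in J}F_{s(j)})\indi{\alg}_{L}F_t$ with $L=\acl(\bigcup_{j\in J}F_{s(j)\setminus\set{k}})$ does follow from the system axioms by monotonicity and transitivity). The gap is in the secondary induction on $\lvert J\rvert$. After you fix $\vec a_{j_1}'$ ``sufficiently generically'' in $F_{s(j_1)\setminus\set{k}}$, the resulting sub-problem need not have any solution, so it is not ``a strictly smaller instance of the same problem'': the original witnesses $(\vec a_j)_{j\neq j_1}$ satisfied $\phi(\vec a_I,\vec a_{j_1},\cdot)$, not $\phi(\vec a_I,\vec a_{j_1}',\cdot)$, and your argument nowhere produces replacement witnesses in the $F_{s(j)}$ realizing the modified formula.

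Here is a concrete failure. Take $n=\set{1,2,3}$, $t=\set{1,2}$, $k=3$, $J=\set{0,1}$ with $s(0)=\set{1,3}$, $s(1)=\set{2,3}$, and $\phi(y_0,y_1)\equiv(y_0=y_1)$, with $a_0=a_1$ some common value in $F_{\set{1,3}}\cap F_{\set{2,3}}=F_{\set{3}}$. The Fact holds: any element of $F_\emptyset=F_{\set1}\cap F_{\set2}$ works for $(\vec a_0',\vec a_1')$. But your first step fixes $\vec a_0'$ \emph{generically} in $F_{\set1}$, hence (assuming $F_{\set1}\supsetneq F_\emptyset$, the typical case) $\vec a_0'\notin F_\emptyset$; the remaining constraint $\vec a_1'=\vec a_0'$ with $\vec a_1'\in F_{\set2}$ then has no solution, since $F_{\set1}\cap F_{\set2}=F_\emptyset$. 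The set of admissible first coordinates $\vec a_0'$ is $F_\emptyset$, a proper (non-Zariski-dense, in fact ``tiny'') subset of $F_{\set1}$, so ``generic in $F_{s(j_1)\setminus\set k}$'' points in exactly the wrong direction, and choosing $\vec a_0'$ inside the correct projection is circular (knowing the projection is nonempty is essentially the statement). The correct argument must handle all of $\vec a_J$ simultaneously; a one-coordinate-at-a-time descent loses the joint constraint linking the $j\in J$.
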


The following is a generalised version of \cite[Lemma 5.16]{dE21A}, which was rooted in \cite{CH99}. For any $i\leq n$, we use the notations $\hat i = \set{1,\ldots,n}\setminus \set{i}$ and $\widehat{i,j} = \set{1,\ldots,n}\setminus \set{i,j}$.

\begin{lemma}\label{lm_intersection_systemACF}
Let $(F_a)_{a\in \cP(n)}$ be an independent system of algebraically closed fields. Let $a,b_1,\ldots, b_m\in \cP(n)$. Then 
\[F_{a} \cap \vect{F_{b_1 },\ldots,F_{b_m} }= \vect{F_{a\cap b_1},\ldots,F_{a\cap b_m}}.\]
In particular, we have
\[F_{\hat n} \cap \vect{F_{\hat 1 },\ldots,F_{\widehat{n-1}} }= \vect{F_{\widehat{n,1}},\ldots,F_{\widehat{n,n-1}}}.\]
\end{lemma}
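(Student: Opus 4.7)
My plan is to establish the two inclusions separately, the $\supseteq$ direction being immediate and the $\subseteq$ direction a direct application of Shelah's fact (Fact \ref{fact shelah}).

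For $\supseteq$: for each $i$, the containment $a \cap b_i \subseteq a$ gives $F_{a \cap b_i} \subseteq F_a$, so any product of elements drawn from the various $F_{a \cap b_i}$ lies in the field $F_a$; simultaneously, $a \cap b_i \subseteq b_i$ gives $F_{a \cap b_i} \subseteq F_{b_i}$, so such a product lies in $\vect{F_{b_1}, \ldots, F_{b_m}}$.

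For $\subseteq$: pick $c \in F_a \cap \vect{F_{b_1}, \ldots, F_{b_m}}$ and write $c = f_1 \cdots f_m$ with $f_i \in F_{b_i}$. I would apply Fact \ref{fact shelah} with $t = a$, taking $m+1$ tuples: $s(0) = a$ with $\vec{a}_0 = c$, and $s(i) = b_i$ with $\vec{a}_i = f_i$ for $1 \leq i \leq m$. The relevant quantifier-free formula is $\phi(x_0, x_1, \ldots, x_m) : x_0 = x_1 \cdot x_2 \cdots x_m$, which holds in $F_n$ by choice of the $f_i$. Shelah's fact then provides $\vec{a}_i' \in F_{b_i \cap a}$ satisfying $\phi$, with the crucial guarantee that since $s(0) = a \subseteq t$ the value $\vec{a}_0' = c$ is preserved. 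Hence $c = f_1' \cdots f_m'$ with $f_i' \in F_{a \cap b_i}$, as required. The ``in particular'' clause is then the instance $a = \hat{n}$, $b_i = \hat{i}$, using $\hat{n} \cap \hat{i} = \widehat{n,i}$.

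I do not foresee any serious obstacle; the argument is routine once one settles on the right formula. The main subtlety to watch out for is to promote the target element $c$ itself to be one of the indexed components (at index $0$ with $s(0) \subseteq t$), so that Shelah's preservation clause forces an honest equality $\vec{a}_0' = c$, rather than giving merely some other element enjoying the same multiplicative expression in the $F_{a \cap b_i}$.
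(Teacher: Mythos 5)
Your proof is correct and takes the same route as the paper's, which simply states that the lemma ``follows from Fact \ref{fact shelah}''; you have filled in precisely the details that justify that citation. The observation you flag as the ``main subtlety'' — namely, placing $c$ itself among the indexed tuples at a position $s(0)=a\subseteq t$ so that the preservation clause in Shelah's fact forces $\vec{a}_0'=c$ rather than merely producing some other element with the same multiplicative decomposition — is exactly the point that makes the argument go through.
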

\begin{proof}
 Follows from Fact \ref{fact shelah}.
\end{proof}

\begin{theorem}\label{thm_n-AM}
The theory $T$ has $n$-amalgamation, for all $n\geq 2$.
\end{theorem}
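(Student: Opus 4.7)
The plan is to reduce to two ingredients: $n$-amalgamation of $\ACF$ with respect to algebraic independence, and the iterated $2$-amalgamation of multiplicative endomorphisms from Lemma \ref{lm_uniqueextendoproduct}. The base case $n=2$ is precisely Lemma \ref{lm_2am_embeddings}, so fix $n\ge 3$ and an independent $\cP^-(n)$-system $(F_a,\theta_a)_{a\in\cP^-(n)}$ of models of $T$. First I would apply $n$-amalgamation for $\ACF$ to complete the underlying $\cP^-(n)$-system of algebraically closed fields to an independent $\cP(n)$-system, obtaining an algebraically closed field $F_n$ containing all the $F_{\hat i}$. Since $F_n$ is algebraically closed, $F_n^\times$ is divisible, which will absorb the final extension step via Fact \ref{fact_fuchs}.

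Next, it suffices to build a multiplicative endomorphism on $A_n:=\vect{F_{\hat 1},\ldots,F_{\hat n}}$ extending each $\theta_{\hat i}$; divisibility of $F_n^\times$ then lifts it to an endomorphism $\theta$ of $F_n^\times$, and setting $\theta(0)=0$ yields the desired $(F_n,\theta)\models T$ completing the system. I construct $\theta^{(k)}$ on $A_k:=\vect{F_{\hat 1},\ldots,F_{\hat k}}$ by induction on $1\le k\le n$, starting with $\theta^{(1)}=\theta_{\hat 1}$. At step $k$, set $A=A_{k-1}$, $B=F_{\hat k}$, and $D=A\cap B$. By Lemma \ref{lm_intersection_systemACF} applied within the independent $\ACF$ system,
\[
D \;=\; F_{\hat k}\cap\vect{F_{\hat 1},\ldots,F_{\widehat{k-1}}}\;=\;\vect{F_{\widehat{k,1}},\ldots,F_{\widehat{k,k-1}}},
\]
which is a product of divisible groups, hence divisible. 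Both $\theta^{(k-1)}$ and $\theta_{\hat k}$ restrict to $\theta_{\widehat{k,i}}$ on each $F_{\widehat{k,i}}$ by compatibility of the original system, so they agree on the generators of $D$ and therefore on $D$. Lemma \ref{lm_uniqueextendoproduct} then produces the unique extension $\theta^{(k)}=\theta^{(k-1)}\odot_D\theta_{\hat k}$ on $A\odot_D B = A_k$.

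The substantive point throughout the induction is the intersection formula $D=\vect{F_{\widehat{k,1}},\ldots,F_{\widehat{k,k-1}}}$, which is exactly where the $\ACF$-independence of the system is consumed; once Lemma \ref{lm_intersection_systemACF} is applied, divisibility of $D$ and agreement of the partial homomorphisms on $D$ are formal consequences of the fact that $D$ is generated by subfields on which the agreement is built into the system. No additional compatibility needs to be verified because $n$-amalgamation for $T$ is stated purely with respect to the $\ACF$-independence relation, so preserving that relation — already ensured by the first phase — suffices to conclude that $(F_a,\theta_a)_{a\in\cP(n)}$ is an independent $\cP(n)$-system of models of $T$.
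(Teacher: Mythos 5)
Your proof is correct and uses exactly the same two ingredients as the paper — ACF $n$-amalgamation to complete the field system, Lemma \ref{lm_intersection_systemACF} to control the intersections $A_{k-1}\cap F_{\hat k}$, and Lemma \ref{lm_uniqueextendoproduct} to glue — but you organize the induction differently. The paper runs an outer induction on $n$: it isolates $S=\set{a\in\cP^-(n): n\in a}$, observes that $(F_a,\theta_a)_{a\in S}$ is order-isomorphic to a $\cP^-(n-1)$-system over $(F_n,\theta_n)$, invokes the full $(n-1)$-amalgamation statement to produce $\theta_A$ on $A=\vect{F_{\hat 1},\ldots,F_{\widehat{n-1}}}$ all at once, and only then performs a single application of Lemma \ref{lm_uniqueextendoproduct} with $B=F_{\hat n}$. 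You instead run an inner iteration on $k=1,\ldots,n$, building the homomorphism on the ascending chain $A_1\subseteq\cdots\subseteq A_n$ by $n-1$ direct applications of Lemma \ref{lm_uniqueextendoproduct}. Unwinding the paper's recursion gives essentially your iteration, so these are variants of the same argument; your version is a bit more self-contained, since it avoids having to identify the restricted collection $(F_a,\theta_a)_{a\in S}$ as a bona fide $\cP^-(n-1)$-system over a new base, and it makes explicit that only the $2$-ary gluing lemma is ever used. What the paper's phrasing buys is that it makes the statement literally inductive in $n$ and so dovetails with the base case $n=2$; what your phrasing buys is that the intersection identity $A_{k-1}\cap F_{\hat k}=\vect{F_{\widehat{k,1}},\ldots,F_{\widehat{k,k-1}}}$, the divisibility of that intersection, and the agreement of the two restrictions on it are all visible at each step, which is where the $\ACF$-independence is actually consumed.
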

\begin{proof}
Let $(F_a,\theta_a)_{a\in \cP^-(n)}$ be an independent $\cP^-(n)$-system. As $(F_a)_{a\in \cP^-(n)}$ is an independent $\cP^-(n)$-system for ACF, there exists an algebraically closed field $F=F_{\set{1,\ldots,n}}$ such that $(F_a)_{a\in \cP(n)}$ is an independent $\cP(n)$-system for ACF. We consider embeddings as inclusions. It is enough to define a multiplicative endomorphism $\theta$ of $F$ extending simultaneously $(F_{\hat i},\theta_{\hat i})$, for $i=1,\ldots,n$. Using divisibility of $F^\times$, it is enough to extend $(F_{\hat i},\theta_{\hat i})$ to $\vect{F_{\hat 1},\ldots,F_{\hat n}}$ for all $i$. We prove it by induction on $n\geq 2$, so we assume that every independent $\cP^-(n-1)$ system can be completed to a $\cP(n-1)$-system. 
Consider $S = \set{a\in \cP^-(n), n\in a}$. The maximal elements of $S$ are $\hat 1,\ldots, \widehat{n-1}$. As $(S,\seq)$ is order-isomorphic to $\cP^{-}(n-1)$, we consider $(F_a,\theta_a)_{a\in S}$ as an independent $\cP^-(n-1)$-system over $(F_n,\theta_n)$, so in particular, there is a homomorphism $\theta_A$ of $A:= \vect{F_{\hat 1},\ldots, F_{\widehat{n-1}}}$ extending $\theta_{\hat i}$ for $i = 1,\ldots, n-1$. As $(A,\theta_A)$ extends $(F_{\widehat{n,i}}, \theta_{\widehat{n,i}})\seq (F_{\hat i},\theta_{\hat i})$, the group $C := \vect{F_{\widehat{n,1}},\ldots,F_{\widehat{n,n-1}}}\seq A$, is stable under $\theta_A$, let $\theta_C = \theta_A\upharpoonright C$. Note that $C$ is divisible. Let $(B ,\theta_B) = (F_{\hat n},\theta_{\hat n})$. As $\theta_B$ also extends each $\theta_{\widehat{n,i}}$ for each $i = 1,\ldots, n-1$, we check that $\theta_A\upharpoonright C = \theta_B\upharpoonright C = \theta_C$. By Lemma \ref{lm_intersection_systemACF}, we have $A\cap B = C$, hence by Lemma \ref{lm_uniqueextendoproduct}, $\theta_A$ and $\theta_B$ extends uniquely to $A\odot_C B$. Note that we loose uniqueness when extending $\theta_A\odot_C\theta_B$ to $F^\times$.
\end{proof}

\section{Existentially closed multiplicative endomorphism}\label{sec:ecmodels}
\subsection{Extensions of multiplicative homomorphisms}

Recall the following characterization of a divisible abelian group $D$:
\begin{center}
    for any abelian groups $H\seq G$ and homomorphism $h:H\rightarrow D$ there exists a group homomorphism $h':G\rightarrow D$ such that $h'\upharpoonright H = h$.
\end{center}


In this section, we will try to refine the property above. More precisely, in a big model of $\Tm$, assume that $a = (a_1,\ldots,a_r)$ are multiplicatively independent over $C = \vect{C}^\dv$, and let $b = (b_1,\ldots,b_t)$ be a tuple of elements from $\vect{Ca}^\dv$. If $\theta:C\rightarrow C'$ is a group homomorphism that can be extended to $\vect{Ca}^\dv$, what are the possible values of $\theta(ab)$? First, $\theta(a)$ might take any value. Each $b_i$ is of finite order say $n_i>0$ over $\vect{Ca}$ and the multiplicative type of $b$ over $\vect{Ca}$ (i.e. the type of $b$ over $Ca$ in $\LLm$) is determined by the minimal equations of elements of the form $b_1^{k_1}\ldots b_n^{k_n}$ over $\vect{Ca}$, where $k_1,\ldots,k_n$ can be choosen such that $k_i\leq n_i$. In turn this gives a finite number of equations over $C$ satisfied by the tuple $(a,b)$ which, once a multiplicatively independent tuple $a$ has been chosen, determine the multiplicative type of $b$ over $Ca$. Then any homomorphism extending $C$ on $\vect{Ca}^\dv$ sends $(a,b)$ to a tuple $(a',b')$ satisfying again those equations. Conversely, satisfying those equations is a sufficient condition for the existence of such an extension of $\theta$.

We make this more precise now. For a start, we can actually reduce those equations to an `irreducible' form, which we describe now.

\begin{definition}[Complete system of minimal equations]\label{def:completesystemofminimalequations}
Let $t,r\in \N$ and $n_1,\ldots,n_t\in \N\setminus \set{0}$, $x = (x_1,\dots,x_r)$, $y = (y_1,\dots,y_t)$ be variables. Let \[\CC = \set{(k_1,\dots,k_t)\mid 0\leq k_i\leq n_i \ \gcd(k_1,\ldots,k_t) = 1}.\] 
Let $C\subseteq K$. For each $(k_1,\dots,k_t)\in \CC$, let $N\in \N,l_1,\dots,l_r\in \Z$ (depending on $(k_1,\dots,k_t)$) be such that $\gcd(N,l_1,\dots,l_r)=1$ and $c = c_{(k_1,\dots,k_t)}\in C$. Assume further that $N = n_i$ for $(k_1,\dots,k_t) = (0,\dots,1,\dots,0)$ ($1$ at the $i$-th position).

 A \emph{complete system of minimal equations\footnote{associated to $t,r\in \N$ and $n_1,\ldots,n_t\in \N$, $x_1,\dots,x_r$, $y_1,\dots,y_t$, and $N,l_1,\dots,l_r, c$ depending on $(k_1,\dots,k_t)\in \CC$.} over $C$} is a (finite) set consisting of, for each $(k_1,\ldots,k_t)\in \CC$, one equation of the following form:
 \[(y_1^{k_1}\ldots y_t^{k_t})^N = cx_1^{l_1}\ldots x_r^{l_r}\]
 In particular it contains equations of the form $y_i^{n_i} = cx_1^{l_1}\ldots x_r^{l_r}$. The system is trivial if $n_i= 1$ for all $i\leq t$. A trivial system consist in a list of equations of the form $y_i = cx_1^{l_1}\ldots x_r^{l_r}$ for $i = 1,\ldots,t$. We identify a complete system of minimal equations with the formula over $C$ denoted $\tau(x;y)$ consisting of the conjunction of all its equations. We emphasize the fact that it depends on the separation of variables $x;y$.
\end{definition}

\begin{remark}
    If $\tau $ is a complete system of minimal equations over $C$ and $\tau(a;b)$ for some $a$ multiplicative independent over $C$, then one can think of $\tau(a,y)$ as ``isolating" the positive type of $b$ over $Ca$ in $\LLm$ (see Proposition \ref{prop_ext_mult} (2)). The number of equations needed to define a complete system of minimal equations can be reduced from $\abs{\CC}$ to $t = \abs{y}$ by considering the successive minimal equations of $b_{i+1}$ over $\vect{Cab_1,\ldots, b_i}$.
\end{remark}

 Given any complete system of minimal equations $\tau$ over a field $K$. Then rewrite each equation $(y_1^{k_1}\ldots y_t^{k_t})^N = cx_1^{l_1}\ldots x_r^{l_r}$ as $x_1^{-l_1}\ldots x_r^{-l_r}(y_1^{k_1}\ldots y_t^{k_t})^N = c$, so the realisations of $\tau$ in $K$ form what Zilber calls a \emph{shifted torus} \cite{Zil05}, which really is just a coset of an algebraic subgroup of $\G_m^n(K) = (K^{\times})^n$, for $n = r+t$.

\begin{definition}
An \textit{m-variety} (over $C$) is the set of realisation in $K$ of a complete system of minimal equations over $C$, up to permutation of variables. We often identify the m-variety with the set $\tau(z)$ of associated equations such that there exists a permutation of the variables $z$ into a partition $(x;y)$ where $\tau(x;y)$ is a complete system of minimal equations.
\end{definition}


\begin{definition}
An \textit{m-generic} of an m-variety $\tau$ over $C$ is a realisation $(a;b)$ of the associated complete system of minimal equations $\tau(x;y)$ such that $a$ is multiplicatively independent over $C$.
\end{definition}

Let $\tau(x;y)$ be a complete system of minimal equations and assume that $f : C\rightarrow C'$ is a map. Then we denote by $\tau^f(x;y)$ the conjunction of equations of the form $(y_1^{k_1}\ldots y_t^{k_t})^N = f(c)x_1^{l_1}\ldots x_r^{l_r}$, for $(y_1^{k_1}\ldots y_t^{k_t})^N = cx_1^{l_1}\ldots x_r^{l_r}$ an instance in $\tau(x;y)$. $\tau^f(x;y)$ is again a complete system of minimal equations over $C'$. If $\tau$ is the m-variety associated to $\tau(x;y)$, denote by $\tau^f$ the m-variety associated to $\tau^f(x;y)$. 

Our interest in m-variety and m-generics lies in the following proposition, whose proof is uncomplicated but lengthy. In order to avoid losing the momentum of this paper, it is given in Appendix \ref{appx_proofcms}.

\begin{proposition}\label{prop_ext_mult}
Let $C = \vect{C}^\dv \seq K^\times$ and $a\in (K^\times)^n$. 
\begin{enumerate}
    \item (Multiplicative locus) There exists an m-variety $\tau$ over $C$ such that $a$ is an m-generic of $\tau$;
    \item (Multiplicative specialization) For any m-variety $\tau$ over $C$, for any multiplicative homomorphism $\theta:C\rightarrow C'$, if $a$ is an m-generic of $\tau$ and $a'$ realises $\tau^\theta$, then there is a multiplicative homomorphism $\theta':\vect{Ca}\rightarrow \vect{Ca'}$ extending $\theta$ and such that $\theta(a) = a'$.
\end{enumerate}
\end{proposition}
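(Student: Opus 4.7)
The plan is to read off the m-variety directly from the multiplicative structure of $\vect{Ca}$ over $C$. After permuting coordinates, pick a maximal multiplicatively independent subtuple $a_x = (a_1, \dots, a_r)$ of $a$ over $C$, and let $a_y = b = (b_1, \dots, b_t)$ be the remaining entries. By maximality of $a_x$, for every $k \in \CC$ the monomial $b^k := b_1^{k_1} \cdots b_t^{k_t}$ has finite order in $\vect{Ca}/\vect{Ca_x}$. Let $N = N_k$ be the smallest positive integer with $(b^k)^N \in \vect{Ca_x}$ and write $(b^k)^N = c \cdot a_x^\ell$ for suitable $c \in C$ and $\ell \in \Z^r$; the $N$'s attached to the unit vectors play the role of the $n_i$ from the definition. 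The key point is that $\gcd(N, \ell) = 1$: since $C = \vect{C}^\dv$ is its own algebraic closure in $\Tm$, it contains $\mu_\infty(K)$, so any common prime factor $d$ of $N$ and $\ell$ would let one extract a $d$-th root of $c \cdot a_x^\ell$ inside $\vect{Ca_x}$, contradicting the minimality of $N$. Collecting these equations yields a complete system of minimal equations $\tau(x; y)$ of which $(a_x; b)$ is an m-generic by construction.

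\textbf{Part (2).} Given $\theta: C \to C'$ and m-generics $(a; b) \models \tau$, $(a'; b') \models \tau^\theta$, define $\theta'$ on $\vect{Ca}$ on representatives by
\[
    \theta'(c \cdot a^l b^k) := \theta(c) \cdot (a')^l (b')^k
\]
for $c \in C$, $l \in \Z^r$, $k \in \Z^t$. The homomorphism property is immediate once well-definedness is established, and well-definedness reduces to the following: every multiplicative relation $c \cdot a^l b^k = 1$ holding in $\vect{Ca}$ must still hold after specialisation, namely $\theta(c) \cdot (a')^l (b')^k = 1$. The $\Z$-lattice of such relations is generated by the defining equations of $\tau$ -- this is precisely the meaning of the qualifier \emph{complete} in the definition -- and since $(a'; b') \models \tau^\theta$ each generating relation is preserved, whence so is the whole lattice.

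\textbf{Main obstacle.} The delicate step is to verify that the system constructed in Part (1) is genuinely complete, i.e., that the equations indexed by $\CC$ generate the full $\Z$-lattice of multiplicative relations between $(a_x; b)$ and $C$. This reduces to elementary lattice bookkeeping in $\Z^{r+t}$ but requires a careful use of the divisibility of $C$ -- in particular of $C \supseteq \mu_\infty(K)$ -- to absorb root-of-unity factors when extracting roots, matching the normalisation $\gcd(N, \ell) = 1$. This combinatorial routine is elementary but lengthy, which is why the authors defer it to Appendix \ref{appx_proofcms}.
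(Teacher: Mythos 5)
Your overall strategy matches the paper's: Part~(1) is done by selecting a maximal multiplicatively independent subtuple $a_x$ and reading off minimal-order equations (the paper's Lemma~\ref{lm_ordersingleelement_mult} plus Lemma~\ref{lm_ext_mult}(1)), and Part~(2) proceeds by defining $\theta'$ on representatives and checking well-definedness. The $\gcd$ normalisation in Part~(1) is correct and uses divisibility of $C$ in the same way as the paper.

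The gap is in Part~(2), and it is in exactly the place you flag as the main obstacle. You assert that ``the $\Z$-lattice of such relations is generated by the defining equations of $\tau$ --- this is precisely the meaning of the qualifier \emph{complete} in the definition.'' That is not what the definition says: Definition~\ref{def:completesystemofminimalequations} only asks that one minimal-order equation be recorded for each $(k_1,\dots,k_t)\in\CC$; it does not assert that every multiplicative relation among $C,a,b$ is a consequence of those finitely many equations. That assertion is the actual mathematical content of Lemma~\ref{lm_ext_mult}(2), and it is not pure ``lattice bookkeeping'': given an arbitrary relation $b^m=ca^u$, one must first Euclidean-divide each $m_i$ by $n_i$ and absorb $b_i^{n_iq_i}\in\vect{Ca}$ using $\theta_1$ on $\vect{Ca}$, then take $d=\gcd$ of the resulting exponent vector and use divisibility of $C$ (to extract $d$-th roots of the constant and to kill the resulting $\mu_d$-factor) so as to reduce to an equation that is literally an instance of $\tau$. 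Without that reduction, ``each generating relation is preserved'' does not show that the arbitrary relation is preserved, because the arbitrary relation need not be a $\Z$-linear combination of the $\tau$-instances over $C$ with integer coefficients --- one must extract roots, which is where $C=\vect{C}^\dv$ enters again.

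Related to this, your Part~(1) omits the \emph{uniqueness} of the complete system attached to $(a;b)$, which the paper proves in Lemma~\ref{lm_ext_mult}(1) and which is needed in the verification above: after the normalisation one must identify the resulting equation with the instance already recorded in $\tau$, and that step uses uniqueness (via the determinacy of $N$, $l$ and $c$ given the multiplicative independence of $a_x$ over $C$). As written, your Part~(1) constructs \emph{a} system but does not establish that it is the one appearing in the hypothesis of Part~(2), so the two halves of the argument do not yet link up.
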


\subsection{Geometric characterisation of existentially closed models}

Recall that an \emph{existentially closed model of $T$} is a model $(K,\theta)$ of $T$ such that for all models $(L,\theta')$ of $T$ which extends $(K,\theta)$, for all quantifier-free $\LL$-formulas $\varphi=\varphi(x_1,\ldots,x_n)$ with parameters in $K$, if $\varphi$ has a realisation in $(L,\theta')$ then $\varphi$ has a realisation in $(K,\theta)$. 

\begin{example}
Let $(K,\theta)$ be an existentially closed model of $T$, then $\theta$ is surjective. Let $a\in K$, it is enough to prove that there exists $b\in K$ such that $\theta(b) = a$. Let $L$ be any algebraically closed field extending $K$ non-trivially. Let $t\in L\setminus K$, then $t$ is of infinite order over the group $K^\times$, which implies that there is a homomorphism $h:t^\Z\rightarrow a^\Z\seq K^\times$. We may define a homomorphism $\theta_1: K^\times \odot t^\Z\rightarrow K^\times$ by setting $\theta_1(k t^n) = \theta(k)h(t^n) = \theta(k)a^n$. This homomorphism extends to an homomorphism $\theta': L^\times\rightarrow L^\times$ (or even $L^\times\rightarrow K^\times$) as $L^\times$ (or $K^\times)$ is divisible. The model $(L,\theta')$ of $T$ extends $(K,\theta)$, hence, as $(K,\theta)$ is existentially closed, $K\models \exists x \theta(x) = a$. 
\end{example}

By an \emph{affine variety} (over $K$) we mean an irreducible Zariski-closed subset of $K^n$ for some $n\in \N$.

\begin{definition}
Let $\tau$ be an m-variety over $K$, and let $V$ be an affine variety over $C$ such that $V\seq \tau\times K^n$. We say that \textit{$V$ projects m-generically onto $\tau$} if for all (equivalently there exists) generics $(a,b)$ of $V$, $a$ is an m-generic of $\tau$. We will also say that the projection $V\rightarrow \tau$ is \textit{generic}.
\end{definition}

\begin{theorem}\label{thm_genhom_mult}
  $(K,\theta)$ is an existentially closed model of $T$ 
  if and only if for all m-varieties $\tau\seq K^n$ over $K$ and for all affine varieties $V\seq \tau\times \tau^\theta$ which projects m-generically onto $\tau$, there exists a tuple $a\in K^n$ such that $(a,\theta(a))\in V$.
\end{theorem}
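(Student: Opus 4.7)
The proof splits into two directions. For $(\Rightarrow)$, the strategy is to witness the existential statement ``$\exists\bar x\ (\bar x,\theta(\bar x))\in V$'' in some extension $(L,\theta')\models T$ of $(K,\theta)$ and then pull the witness back by existential closedness. Concretely, pick a generic point $(\bar c,\bar d)$ of $V$ over $K$ in some algebraically closed field $L\supseteq K$. The m-generic projection hypothesis yields that $\bar c$ is an m-generic of $\tau$, while the inclusion $V\subseteq\tau\times\tau^\theta$ gives $\bar d\in\tau^\theta$. Proposition~\ref{prop_ext_mult}(2), applied with $C = K^\times$ (divisible since $K$ is algebraically closed), produces a multiplicative homomorphism $\theta_0'\colon \vect{K^\times\bar c}\to L^\times$ extending $\theta$ and sending $\bar c$ to $\bar d$. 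Divisibility of $L^\times$ lets one further extend $\theta_0'$ to an endomorphism $\theta'$ of $L^\times$, and $(L,\theta')\models T$ extends $(K,\theta)$ and realises the quantifier-free formula ``$(\bar x,\theta(\bar x))\in V$'' at $\bar c$; existential closedness then transfers the realisation to $K$.

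For $(\Leftarrow)$, suppose the geometric condition holds. Take an extension $(L,\theta')\supseteq(K,\theta)$ of models of $T$ and a quantifier-free $\LL$-formula $\varphi(\bar x)$ over $K$ realised by $\bar c\in L^n$. Standard syntactic reductions allow one to assume $\varphi$ has a very particular shape. First, introduce auxiliary variables $u_i,v_i$ for every occurrence of $\theta(t)$ in $\varphi$, replacing the subterm by $u_i$ and adding the equations $v_i = t$ and $u_i = \theta(v_i)$; the last of these is a linear relation between the tuple and its $\theta$-image, so the atomic subformulas of $\varphi$ become polynomial (in)equations in an augmented tuple $\bar X$ and in $\theta(\bar X)$. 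Next, apply the Rabinowitsch trick: each inequation $Q(\bar X,\theta(\bar X))\neq 0$ is replaced by the equation $Q\cdot z = 1$ in a fresh variable $z$ with realisation $z_0:=1/Q(\bar C,\theta'(\bar C))$. After these reductions one has an enlarged tuple $\bar X' = (\bar X,z)$, a realisation $\bar C'\in L^{N+1}$, and a conjunction $\psi(\bar X')$ of polynomial equations in $\bar X'$ and $\theta(\bar X')$ whose realisations in $K$ project to realisations of $\varphi$.

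Now apply Proposition~\ref{prop_ext_mult}(1) to obtain an m-variety $\tau$ over $K$ for which $\bar C'$ is an m-generic, and let $W$ be the $K$-Zariski locus of $(\bar C',\theta'(\bar C'))$ in $L^{2(N+1)}$; as $K$ is algebraically closed, $W$ is an irreducible affine variety over $K$. Applying $\theta'$ to the multiplicative equations of $\tau$ shows $\theta'(\bar C')$ realises $\tau^\theta$, so $W\subseteq \tau\times\tau^\theta$, and by construction $W$ projects m-generically onto $\tau$. The geometric hypothesis yields $\bar A'\in K^{N+1}$ with $(\bar A',\theta(\bar A'))\in W$; since every equation of $\psi$ lies in the defining ideal of $W$, projecting $\bar A'$ to its first $n$ coordinates gives a realisation of $\varphi$ in $K$. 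The principal subtlety is the Rabinowitsch step: the geometric hypothesis concerns only \emph{closed} affine varieties, so the open condition $Q\neq 0$ must be encoded as a closed one compatible with the ``tuple and its $\theta$-image'' format. The remedy is to include both $z$ \emph{and} $\theta(z)$ in the enlarged ambient setup so that the new m-variety $\tau$ and locus $W$ still satisfy $W\subseteq\tau\times\tau^\theta$; the equation $Q\cdot z=1$, once satisfied by $(\bar A',\theta(\bar A'))$, then forces $Q(\bar A,\theta(\bar A))\neq 0$ for free.
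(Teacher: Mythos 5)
Your argument follows the paper's proof closely in both directions: the forward direction constructs $(L,\theta')$ from a generic point of $V$ via Proposition \ref{prop_ext_mult}(2) and divisibility of $L^\times$; the converse linearises iterated $\theta$, takes the m-variety and the $K$-Zariski locus of the realisation in the extension, and invokes the geometric condition. The one organizational difference---applying the Rabinowitsch trick at the polynomial level after linearisation, rather than at the $\LL$-term level before it as the paper does (via $x\neq y \leftrightarrow \exists z\, z(x-y)=1$)---is harmless, and your observation that the Rabinowitsch variable $z$ together with $\theta(z)$ must be carried along in the enlarged tuple is the right way to make that order of operations work.

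There is, however, one step missing in the converse direction. Proposition \ref{prop_ext_mult}(1), which produces the m-variety $\tau$ with $\bar C'$ as an m-generic, requires $\bar C'$ to have all coordinates non-zero: m-varieties and complete systems of minimal equations only make sense on the multiplicative group. The Rabinowitsch coordinate is automatically non-zero (since $Qz=1$), but nothing in your reductions forces the remaining coordinates of $\bar C'$ to be non-zero, and in that case the application of Proposition \ref{prop_ext_mult}(1) fails. The paper handles this by noting one may further assume no coordinate of $\bar C'$ lies in $K$: if $\bar C'_i\in K$ then $\theta'(\bar C'_i)=\theta(\bar C'_i)\in K$ as well, so the pair $(\bar C'_i,\theta'(\bar C'_i))$ can be substituted by parameters and that variable eliminated; iterating, all remaining coordinates lie outside $K$, hence are non-zero. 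Adding this substitution step makes your proof complete.
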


\begin{proof}
We start with a claim.
\begin{claim}\label{claim_extension_sat_tau}
    Let $(K,\theta)\models T$, $\tau\seq K^n$ an m-variety, and $V\seq \tau\times \tau^\theta$ projecting generically onto $\tau$. Then there exists an extension $(L,\theta')$ of $(K,\theta)$ and $a\in L^{n}$ such that $(a,\theta'(a))\in V$.
\end{claim}
 \begin{proof}[Proof of Claim \ref{claim_extension_sat_tau}]
Let $L$ be an elementary extension of $K$ containing a generic $(a,a')$ of $V$ over $K$. We extend $\theta : K\rightarrow K$ to $\theta^*: L \rightarrow L$ in two steps. First, as $V$ projects m-generically onto $\tau$, $a$ is an m-generic of $\tau$ over $K$ and $L\models \tau^\theta(a')$. By Proposition \ref{prop_ext_mult} (2), $\theta$ extends to $\theta' : \vect{Ka}\rightarrow \vect{Ka'}$ with $\theta(a) = a'$. Then, as $L^\times$ is divisible, we extend $\theta'$ to a endomorphism $\theta^* : L\rightarrow L$. 
\end{proof}

If $(K,\theta)$ is existentially closed, $\tau\seq K^n$, $V\seq \tau\times \tau^\theta$ projects m-generically onto $\tau$. By Claim \ref{claim_extension_sat_tau} and existential closedness of $(K,\theta)$, there exists $a\in K^{n}$ such that $(a,\theta(a))\in V$.

  Conversely, let $(K,\theta)$ be a model of $T$ that satisfies the right hand side condition. In order to show that $(K,\theta)$ is existentially closed, one has to show that every finite system of equations and inequations of $\LL$-terms with parameters in $K$ that has a solution in an extension of $(K,\theta)$ has a solution in $K$. As $T\models \forall xy ( x\neq y \leftrightarrow \exists z z(x -y) = 1)$ it is sufficient to consider only systems of equations of the form $t(x) = 0$, for $t(x)$ an $\LL$-term.

\begin{claim}[Linearisation]\label{claim_linearisation_mult}
    For every model $(K,\theta)$ of $T$, for every $\LL$-term $t(x)$, there is a cunjunction of $\LLf$-equations $\varphi(z,z')$ such that $t(x) = 0$ has a solution in $K$ if and only if $\varphi(z,\theta(z))$ has a solution in $K$, for some tuple $z$.
\end{claim}

 \begin{proof}[Proof of Claim \ref{claim_linearisation_mult}]
    Assume that there is an occurence of $\theta(t'(x))$ in $t(x)$, for some $\LL$-terms $t'$ and that $\theta$ does not appear in $t'$. Then $t(x) = 0$ has a solution in $K$ if and only if the formula (with free variable $z$)
    $$\exists y \exists x (z = t'(x) \wedge y = \theta(z)\wedge \tilde{t}(x,y) = 0)$$
    has a solution in $K$ ; where $y$ is a new variable, $z$ is a single variable and $\tilde t(x,y) = t[\theta(t'(x))/y](x)$ is the term obtained by replacing the mentioned occurence of $\theta(t'(x))$ in $t(x)$ by $y$. Now there is one less occurence of $\theta$ in $\tilde t(x,y)$ than in $t(x)$, and there are no occurences of $\theta$ in $t'$. By reiterating the same operation inside $\tilde t(x,y)$ one get that there is an $\LL$-formula $\psi(z)$ in the language $\LL$ with $z = (z_1,\dots,z_r)$ such that :
    \begin{itemize}
      \item $t(x) = 0$ has a solution in $K$ if and only if $\psi(z)$ has a solution in $K$;
      \item every occurence of $\theta$ in $\psi(z)$ is of the form $\dots \wedge v = \theta(z_i)\wedge\dots$, for some bound variable $v$ (in particular there is no iteration of $\theta$ in $\psi(z)$).
    \end{itemize}
    Now replace each occurence of $\theta(z_i)$ in $\psi(z)$ by a new single free variable $z'_i$ and call $\psi(z,z')$ the resulting formula, which is in the language $\LLr$. Then $t(x) = 0$ has a solution in $K$ if and only if $\psi(z,\theta(z))$ has a solution in $K$. Finally, we may also assume that there are no more existential quantifiers in $\psi(z,z')$ by making bound variables free and adding them to $z,z'$. 
\end{proof}

 Let $(t_i(x))_i$ be $\LL$-terms with parameters in $K$ and assume that $\psi(x)$ is the formula $\bigwedge_i t_i(x) = 0$. Assume that $\psi(x)$ has a solution in an extension of $(K,\theta)$. Using Claim~\ref{claim_linearisation_mult}, there exists an $\LLr$-formula $\varphi(z,z')$ consisting of equations, also with parameters in $K$ such that $\varphi(z,\theta(z))$ has a solution in that extension, say $a = (a_1,\ldots,a_n)$, for $n=\abs{z}$. We may assume that $a\cap K = \emptyset$, in particular $a_i\neq 0$ for all $i<n$.
 
By Proposition \ref{prop_ext_mult} (1), let $\tau(z)$ be an m-variety over $K$ such that $a$ is an m-generic of $\tau$. We also have $\theta(a)\in \tau^\theta$. Let $V\subset K^{n}\times K^n$ be the locus of $(a,\theta(a))$ over $K$. As $\varphi(z,z')$ is a conjunction of equations, $\varphi(K)$ is a Zariski-closed set so we have $V(K)\seq \varphi(K)$. For the same reason, $V\seq \tau\times \tau^\theta$. By Proposition \ref{prop_ext_mult} (1), $a$ is an m-generic of $\tau$ over $K$, as $(a,\theta(a))$ is a generic of $V$ we have that $V$ projects m-generically onto $\tau$. By hypotheses there exists $(b,\theta(b))\in V(K)$, hence $(b,\theta(b))\models \varphi(z,z')$.
\end{proof}

\section{Axiomatisation and model-completeness}\label{section_axiomandmodelcompleteness}

\subsection{The geometric characterisation is first-order}\label{subsection_geometricchar} From Theorem \ref{thm_genhom_mult}, it is clear that  in order to axiomatise the class of existentially closed models of $T$, one needs to express ``$V\seq \tau\times \tau^\theta$ projects m-generically onto $\tau$" in a first-order way.

Fix an algebraically closed $K$ and an algebraically closed extension $\K$ of infinite transcendence degree over $K$.

The following notion is due to Zilber \cite{Zil05}. It also appears later in \cite{BHG14}, and more recently in \cite{Tra17}.
\begin{definition}
  Let $r,m\in \N$. An affine variety $V\seq K^{r+m}$ is \emph{multiplicatively $r$-free} if for all (equivalently there exists) generics $(a_1,\ldots,a_{r+m})$ of $V$ over $K$, $a_1,\ldots,a_r$ is multiplicatively independent over $K$.
\end{definition}

\begin{lemma}
Let $\tau\seq K^n$ be an m-variety over $K$. Let $n = r+t$ and $\tau(x;y)$ the complete system of minimal equations associated to $\tau$, with $\abs{x} = r$ and $\abs{y} = t$. Let $V$ be an affine variety $V\seq \tau\times K^m\seq K^r\times K^t\times K^m$. The following are equivalent:
\begin{enumerate}
    \item $V$ projects m-generically onto $\tau$;
    \item $V$ is multiplicatively $r$-free.
\end{enumerate}
\end{lemma}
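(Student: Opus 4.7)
The proof will be essentially an unraveling of definitions, so the plan is to line up both notions explicitly and observe they reduce to the same statement about the first $r$ coordinates of a generic point of $V$.

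First I would fix notation: write a generic of $V$ over $K$ as $(a,b,c)$ with $a\in K^r$, $b\in K^t$, $c\in K^m$, so that $(a,b)$ lies in $\tau\subseteq K^{r+t}$ (in particular, $\tau(a;b)$ holds) and $(a,b,c)$ is a generic of $V$. Then:
\begin{itemize}
\item By the definition of \emph{m-generic of $\tau$} associated to the partition $\tau(x;y)$, the tuple $(a,b)$ is an m-generic of $\tau$ if and only if $\tau(a;b)$ holds \emph{and} $a$ is multiplicatively independent over $K$. The first clause is automatic because $V\subseteq \tau\times K^m$.
\item By the definition of \emph{$V$ projects m-generically onto $\tau$}, this property holds for all (equivalently, for one) generic $(a,b,c)$ of $V$, which reduces to: $a$ is multiplicatively independent over $K$.
\item By the definition of \emph{multiplicatively $r$-free}, applied to $V\subseteq K^{r+(t+m)}$ with distinguished first $r$ coordinates, $V$ is multiplicatively $r$-free if and only if, for any (equivalently, some) generic $(a,b,c)$ of $V$ over $K$, the first $r$ coordinates $a$ are multiplicatively independent over $K$.
\end{itemize}

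Thus both (1) and (2) translate into the single condition: for a (any) generic $(a,b,c)$ of $V$ over $K$, $a=(a_1,\dots,a_r)$ is multiplicatively independent over $K$. The two implications are then immediate: assuming (1), apply the definition to a chosen generic to obtain multiplicative independence of $a$, which is (2); conversely, assuming (2), the same generic witnesses that $(a,b)$ satisfies $\tau(a;b)$ with $a$ multiplicatively independent, so $(a,b)$ is an m-generic of $\tau$, giving (1).

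There is no real obstacle here; the only thing worth being careful about is the fact that ``m-generic of $\tau$'' depends on the chosen partition of variables $(x;y)$ in the complete system of minimal equations $\tau(x;y)$, but the lemma fixes this partition, and the multiplicative $r$-freeness of $V$ is defined with respect to the matching separation of the first $r$ coordinates of $V$, so the two notions line up on the nose.
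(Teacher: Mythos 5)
Your proof is correct and matches the paper's approach: the paper's proof is simply ``This is by definition,'' and your argument is the explicit unraveling of those definitions, showing both conditions reduce to multiplicative independence over $K$ of the first $r$ coordinates of a generic of $V$.
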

\begin{proof}
This is by definition.
\end{proof}

With this in mind, we can re-state the main theorem of the previous section:
\begin{theorem}\label{thm_genhom_mult_version_axioms}
  $(K,\theta)$ is an existentially closed model of $T$ 
  if and only if for all complete systems of minimal equations $\tau(x;y)$ over $K$ with $\abs{x} = r$ and $\abs{y} = t$ and for all multiplicatively $r$-free affine varieties $V\subset \tau(K)\times \tau^\theta(K)\seq K^r\times K^t\times K^r\times K^t$ there exist tuples $a\in K^r,b\in K^t$ such that $(ab,\theta(ab))\in V$.
\end{theorem}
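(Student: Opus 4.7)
The plan is to observe that this theorem is just a reformulation of Theorem \ref{thm_genhom_mult}, where the geometric condition ``$V$ projects m-generically onto $\tau$'' is replaced by the equivalent condition ``$V$ is multiplicatively $r$-free''. Since the previous Lemma already records this equivalence, the proof reduces to transporting notation and unpacking the ambient product decomposition $K^r \times K^t \times K^r \times K^t$.

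More concretely, I would start from an m-variety $\tau \subseteq K^n$ with complete system of minimal equations $\tau(x;y)$ where $|x|=r$ and $|y|=t$. Unpacking the definitions, a generic point of $V \subseteq \tau \times \tau^\theta$ has the form $((a,b),(a',b'))$, and projecting m-generically onto $\tau$ means that for such a generic $(a,b)$ is an m-generic of $\tau$, i.e. that $a$ is multiplicatively independent over $K$. Under the ambient embedding $V \subseteq K^r \times K^t \times K^r \times K^t$, the tuple $a$ occupies precisely the first $r$ coordinates, so this is literally the defining condition for $V$ to be multiplicatively $r$-free in the sense of the definition from \cite{Zil05}. The previous Lemma formalises this equivalence, with proof ``by definition''.

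Granted this dictionary, the plan is to invoke Theorem \ref{thm_genhom_mult} directly. For the forward direction, assume $(K,\theta)$ is existentially closed, fix a complete system of minimal equations $\tau(x;y)$ over $K$ and a multiplicatively $r$-free variety $V \subseteq \tau(K) \times \tau^\theta(K)$; by the Lemma, $V$ projects m-generically onto $\tau$, so Theorem \ref{thm_genhom_mult} produces $(a,b) \in K^r \times K^t$ with $(ab,\theta(ab)) \in V$. The converse direction is identical in reverse: every m-variety arises as the locus of a complete system of minimal equations, and any $V \subseteq \tau \times \tau^\theta$ projecting m-generically is, by the Lemma, multiplicatively $r$-free with respect to the appropriate partition of variables, so the assumption supplies the required $K$-rational point.

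The main obstacle has in fact already been handled in the earlier sections: it lies in Proposition \ref{prop_ext_mult} (multiplicative locus and specialisation) which powers Theorem \ref{thm_genhom_mult}. The only subtlety in the present restatement is bookkeeping around the partition of variables into a multiplicatively independent part $x$ and a torsion part $y$, and confirming that ``multiplicatively $r$-free'' captures exactly the genericity of the projection onto $\tau$; both are essentially bookkeeping rather than new mathematical content. The value of this reformulation is that, unlike ``projects m-generically'', multiplicative $r$-freeness will be amenable to uniform first-order expression in the subsequent subsections, which is the point of the reformulation for the axiomatisation.
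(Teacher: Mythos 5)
Your proposal matches the paper exactly: the paper presents Theorem \ref{thm_genhom_mult_version_axioms} as an immediate restatement of Theorem \ref{thm_genhom_mult}, with the only new content being the preceding Lemma equating ``$V$ projects m-generically onto $\tau$'' with ``$V$ is multiplicatively $r$-free'' (whose proof is ``by definition''). Your bookkeeping around the partition of variables and the transport through the Lemma is precisely what the paper leaves implicit.
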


We extend the notion of being multiplicatively $r$-free to all definable sets, in the most natural way. 
\begin{definition}
A definable set $X\seq K^{r+t}$ is \emph{multiplicatively $r$-free} if one of the irreducible component of its Zariski-closure is multiplicatively $r$-free. Equivalently, the $r$ first coordinates of some generic of $X$ over $K$ are multiplicatively independent over $K$. 
\end{definition}

\begin{lemma}\label{lm_char_r-free}
 Let $X\seq K^{r+t}$ be a definable set. The following are equivalent:
\begin{enumerate}
    \item $X$ is multiplicatively $r$-free;
    \item there is $a\in X(\K)$ such that $a_1,\dots,a_r$ are multiplicatively independent over $K$;
    \item for all finite sets $S$ of multiplicative equations $x_1^{k_1}\ldots x_r^{k_r} = c$ with $c\in K$, there exists $a\in X(K)$ such that $(a_1,\dots,a_r)$ does not satisfy any equations of $S$;
    \item $Y = \Proj(X)\seq K^r$ is multiplicatively $r$-free.
\end{enumerate}
\end{lemma}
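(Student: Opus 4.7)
The plan is to establish $(2)\Leftrightarrow (3)$ by model-completeness of ACF together with saturation of $\K$, $(1)\Leftrightarrow (2)$ via algebro-geometric generics and specializations, and $(2)\Leftrightarrow (4)$ by a direct projection argument.

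First I would prove $(2)\Leftrightarrow(3)$. The implication $(2)\Rightarrow(3)$ is immediate from model-completeness of ACF: since $K\preceq \K$, the existential formula over $K$ expressing membership in $X$ together with the failure of the finitely many multiplicative equations in $S$ has a witness in $\K$ by (2), hence already in $K$. Conversely, for $(3)\Rightarrow(2)$, consider the partial type $p(x)$ over $K$ consisting of $x\in X$ together with $x_1^{k_1}\cdots x_r^{k_r}\neq c$ for every $(k_1,\ldots,k_r)\in \Z^r\setminus\set{0}$ and every $c\in K$. By (3), $p$ is finitely satisfiable in $K$, hence consistent, and is realised in $\K$ (enlarging $\K$ inside the monster of ACF if needed).

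Next, for $(1)\Leftrightarrow(2)$, decompose the Zariski closure as $\overline{X}=V_1\cup\cdots\cup V_m$ into irreducible components. For $(1)\Rightarrow(2)$, if some $V_{i_0}$ is multiplicatively $r$-free, pick a generic point $b$ of $V_{i_0}$ over $K$ inside $\K$. Since $X$ is constructible, $X\cap V_{i_0}$ is a constructible dense subset of $V_{i_0}$, hence contains a nonempty Zariski-open subset, so $b$ (which avoids every proper closed subset of $V_{i_0}$ defined over $K$) lies in $X$, and by definition $b_1,\ldots,b_r$ are multiplicatively independent over $K$. For $(2)\Rightarrow(1)$, take $a\in X(\K)$ with $a_1,\ldots,a_r$ multiplicatively independent over $K$; then $a$ lies in some component $V_{i_0}$ and is therefore a specialisation over $K$ of the generic point $b$ of $V_{i_0}$. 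Any multiplicative relation $b_1^{k_1}\cdots b_r^{k_r}=c\in K$ holding on $b$ would specialise to the same relation on $a$, contradicting the hypothesis on $a$. Thus $b_1,\ldots,b_r$ are multiplicatively independent, and $V_{i_0}$ is multiplicatively $r$-free.

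Finally, for $(2)\Leftrightarrow(4)$, the definable projection induces a surjection $X(\K)\to Y(\K)$. If $a\in X(\K)$ has $a_1,\ldots,a_r$ multiplicatively independent over $K$, then $(a_1,\ldots,a_r)\in Y(\K)$ witnesses (2) for $Y$, so $Y$ is multiplicatively $r$-free by the already-proved $(2)\Rightarrow(1)$. Conversely, applying $(1)\Rightarrow(2)$ to $Y$ yields $a'\in Y(\K)$ with multiplicatively independent coordinates, and by definition of $Y$ as $\exists y\,X(x,y)$ (interpreted in the saturated $\K$) there is a lift $a''\in\K^t$ with $(a',a'')\in X(\K)$, witnessing (2) for $X$. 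The only step that demands genuine care is the claim that the generic point of an irreducible component $V_{i_0}$ of $\overline{X}$ actually lies in $X$, which is where constructibility of definable sets in ACF enters; everything else is a straightforward combination of model-completeness, saturation, and specialisation of multiplicative relations.
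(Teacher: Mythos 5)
Your proof is correct, and since the paper itself only remarks that the lemma ``is an easy checking, from the definitions, and compactness,'' you have essentially spelled out the exact argument the author had in mind: $(2)\Leftrightarrow(3)$ is compactness plus model-completeness of ACF, $(1)\Leftrightarrow(2)$ is genericity/specialization, and $(2)\Leftrightarrow(4)$ is the projection argument. The only point worth making explicit (you assert it without proof) is why $X\cap V_{i_0}$ is dense in the component $V_{i_0}$ of $\overline{X}$: if it were not, $\overline{X}$ would be covered by $\overline{X\cap V_{i_0}}\subsetneq V_{i_0}$ together with the other components, and irreducibility of $V_{i_0}$ would force it inside one of these proper closed sets, contradicting that it is a component; also, after realizing the type in $(3)\Rightarrow(2)$ in a possibly larger model, one should note that passing through $(2)\Rightarrow(1)\Rightarrow(2)$ brings the witness back down into the originally fixed $\K$.
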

\begin{proof}
This is an easy checking, from the definitions, and compactness.
\end{proof}

Let $V\subseteq K^r\times K^m$ an algebraic variety. The condition `$V\subseteq K^r\times K^m$ is multiplicatively $r$-free' is a first-order condition, this result first appears, to our knowledge, in \cite[Theorem 3.2]{Zil05}, where it is used to axiomatize the pseudo-exponentiation. $V$ is multiplicatively $r$-free corresponds to \textit{$pr_xV$ is free of multiplicative dependencies}, in Zilber's terms. It appears then in various places, such as \cite{KZ14}, \cite{BHG14} and more recently in \cite{Tra17}. For $m=0$ is it also called `free' or `multiplicatively large'. We recall the main ingredients of the proof that it is a first-order condition, based on the presentation in \cite[Section 3]{Tra17}. 

\begin{fact}\label{fact_ingredients}
Let $V\subseteq K^n$ be an affine variety.
\begin{enumerate}
    \item $V$ is multiplicatively $n$-free if and only if for all $c\in K$, $(k_1,\ldots,k_n)\in \Z^n\setminus \set{(0,...,0)}$, $V$ is not included in the zero set of the equation $x_1^{k_1}\ldots x_n^{k_n} = c$. 
    \item Every definable subgroup of $(K^\times)^n$ is defined by a finite set of equations of the form $x_1^{k_1}\ldots x_n^{k_n} = 1$ for some $(k_1,\ldots,k_n)\in \Z^n$.  
    \item (Zilber's Indecomposable) If $(1,\dots,1)\in V$ then $\Pi_{2n}(V\cap (K^{\times})^n)$ is the smallest (for inclusion) definable subgroup of $(K^\times)^n$ containing $V\cap (K^\times)^n$, where $\Pi_n(X) = \set{x_1\ldots x_n\mid x_i\in X}$.
    \item If $(1,\ldots , 1)\in V$, then $V\cap (K^\times)^n$ is multiplicatively $n$-free if and only if $\Pi_{2n}(V\cap (K^\times)^n) = (K^\times)^n$. 
\end{enumerate}
\end{fact}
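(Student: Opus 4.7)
The plan is to handle the four items in order, with the main technical content concentrated in (3) via Zilber's indecomposability in the torus. For (1), I would simply unpack definitions: if a generic tuple $(a_1,\ldots,a_n)$ of $V$ over $K$ satisfies $a_1^{k_1}\cdots a_n^{k_n} = c$ with $(k_1,\ldots,k_n) \neq 0$ and $c \in K$, then clearing denominators yields a polynomial equation over $K$ vanishing at a generic point of $V$, hence vanishing on all of $V \cap (K^\times)^n$; conversely, any such containment contradicts multiplicative independence of the generic. For (2), this is the classical structure theorem for algebraic subgroups of the torus $\G_m^n$: given a definable (hence, after Zariski closure, algebraic) subgroup $H \leq (K^\times)^n$, the lattice
\[
\Lambda = \{(k_1,\ldots,k_n) \in \Z^n \mid x_1^{k_1}\cdots x_n^{k_n} = 1 \text{ for all } x \in H\}
\]
is a finitely generated subgroup of $\Z^n$ by Noetherianity, and one checks that $H$ coincides with the common zero set of the corresponding finitely many multiplicative equations, using that any character of $(K^\times)^n/H$ lifts to an element of $\Lambda$.

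For (3), which is the heart of the fact, I would use an effective form of Zilber's indecomposability adapted to algebraic tori. Since $V$ is irreducible and contains $1$, the set $X := V \cap (K^\times)^n$ is a non-empty Zariski-open subset of $V$, hence irreducible, and contains $1$. The ascending chain of irreducible constructible sets $\Pi_k(X)$ has non-decreasing dimension bounded by $n = \dim (K^\times)^n$, so both the dimensions and the Zariski closures stabilize at some $k_0 \leq n$; write $H = \overline{\Pi_{k_0}(X)}$. A standard continuity argument shows $H$ is closed under multiplication: the set $S = \{y \in (K^\times)^n : H y \subseteq H\} = \bigcap_{x \in H} x^{-1} H$ is closed, contains $\Pi_{k_0}(X)$ (since $\Pi_{k_0}(X) \cdot \Pi_{k_0}(X) \subseteq \Pi_{2k_0}(X) \subseteq H$), hence contains $H$. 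A Noetherian descending-chain argument on $H \supseteq hH \supseteq h^2 H \supseteq \ldots$ shows $H = hH$ for every $h \in H$, and since $1 \in H$ this yields $h^{-1} \in H$; thus $H$ is a closed subgroup. Finally, the dense constructible set $\Pi_{k_0}(X) \subseteq H$ contains a non-empty open $U \subseteq H$, and for any $y \in H$ the open sets $U$ and $y U^{-1}$ in the irreducible $H$ intersect, giving $y \in U \cdot U \subseteq \Pi_{2k_0}(X) \subseteq \Pi_{2n}(X)$. Hence $\Pi_{2n}(X) = H = \vect{X}$.

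For (4), the previous items combine cleanly. Since $1 \in V$, any multiplicative equation $x_1^{k_1}\cdots x_n^{k_n} = c$ holding on $V \cap (K^\times)^n$ must have $c = 1$ (evaluate at the identity), so by (1), multiplicative $n$-freeness of $V \cap (K^\times)^n$ is equivalent to $V \cap (K^\times)^n$ not being contained in any proper algebraic subgroup of $(K^\times)^n$; by (2) and (3), the smallest such subgroup is $\Pi_{2n}(V \cap (K^\times)^n)$, yielding the claimed equivalence. The main obstacle is clearly (3): the explicit bound of $2n$ products and the passage from a dense constructible subset of a subgroup to the subgroup itself (rather than just its closure) rest on the irreducibility, Noetherian, and translation-openness arguments above, which together form the technical core of the proof.
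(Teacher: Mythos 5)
Your proposal is correct and follows the same standard route the paper takes, the difference being that the paper simply cites the literature while you reconstruct the arguments in full: (1) is indeed the compactness/irreducibility argument (with the point that the constant $c = a_1^{k_1}\cdots a_n^{k_n}$ lands in $K^\times$ precisely because it witnesses multiplicative dependence over $K$); (2) is the classical structure theorem for algebraic subgroups of $\G_m^n$ which the paper attributes to Marker and Bombieri--Gubler; (3) is Zilber's indecomposability, for which your stabilisation-of-dimensions, closed-stabiliser, and open-translate argument is exactly the standard proof the paper defers to Marker's book; and (4) is the stated combination. One small bookkeeping point worth making explicit in (3): the bound $k_0 \le n$ for the stabilisation index uses that if $\dim X \ge 1$ then each strict increase adds at least one to the dimension starting from $\ge 1$, while if $\dim X = 0$ the irreducible set $X$ containing $(1,\dots,1)$ is the trivial group and the claim is immediate; this is what makes $\Pi_{2n}$ (rather than $\Pi_{2n+2}$) the right exponent.
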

\begin{proof}
\textit{(1)} Is easy by compactness and irreducibility of $V$. \textit{(2)} This follows from \cite[Lemma 7.4.9]{Mar02} and \cite[Corollary 3.2.15]{BG06}. \textit{(3)} See \cite[Theorem 7.3.2]{Mar02}. \textit{(4)} Follows from \textit{(1)}, \textit{(2)} and \textit{(3)}.
\end{proof}

\begin{remark}\label{rk_ingredients}
If $V\seq K^{r}$ is an affine variety and $O\seq K^{r}$ is a Zariski open set, then $V$ is multiplicatively $r$-free if and only if $V\cap O$ is multiplicatively $r$-free. In particular, $V$ is multiplicatively $r$-free if and only if $V\cap (K^\times)^r$ (which is always nonempty if $V$ is multiplicatively $r$-free) is multiplicatively $r$-free. Further, for any $c\in (V\cap K^{\times})^r$, $V$ is multiplicativly $r$-free if and only if $c^{-1}(V\cap K^{\times})^r$ is multiplicatively $r$-free.
\end{remark}

We need some classical definability results in algebraically closed fields \cite{VDD78,Joh16}, this is \cite[Fact 3.18]{Tra17}.

\begin{fact}\label{fact_uniformdefzar}
Let $\psi(x,y)$ be an $\LLr$-formula. Then there exist $\LLr$-formulas $\rho(x,z)$, $\delta_d(y),\mu_r(y),\iota(y)$ (depending on $\varphi$) such that for all $b\in K^{\abs{y}}$
\begin{enumerate}
    \item $K\models \delta_d(b)$ if and only if $\dim(\varphi(K,b))=d$;
    \item $K\models \mu_r(b)$ if and only if the Morley degree $\DM(\varphi(K,b))$ equals $r$;
    \item $K\models \iota(b)$ if and only if the Zariski closure of $\varphi(K,b))$ is an affine variety;
    \item $V$ is an irreducible component of the Zariski closure of $\psi(K,b)$ if and only if there exists $c\in K^{\abs{z}}$ such that $\rho(K,c) = V$.
\end{enumerate}
\end{fact}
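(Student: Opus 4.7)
The plan is to use that ACF is $\omega$-stable, that Morley rank coincides with algebraic (Krull) dimension on constructible sets, and that --- crucially --- for each fixed formula $\psi(x,y)$ there is a uniform bound on the degree of polynomials needed to describe the Zariski closure $\overline{\psi(K,b)}$, independent of $b$. Both facts are classical: the first is standard stability theory for ACF; the second follows from quantifier elimination and a routine inspection of how the polynomials defining a boolean combination of polynomial (in)equalities specialise as the parameter $b$ varies. This degree bound reduces every question about $\psi(K,b)$ to a finitary polynomial question in the coefficients of those defining polynomials, hence to a first-order condition in $b$.

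Granted this setup, items \textit{(1)} and \textit{(2)} follow from the definability of Morley rank and Morley degree in families in $\omega$-stable theories of finite Morley rank. Concretely, $\dim \psi(K,b) \leq d$ expresses that the projection of $\overline{\psi(K,b)}$ to no choice of $d{+}1$ coordinates is dominant, which is a polynomial condition on the bounded-degree coefficients; negating and iterating yields $\delta_d(y)$. For Morley degree, $\DM \psi(K,b) = r$ counts the number of top-dimensional irreducible components of $\overline{\psi(K,b)}$, again a uniformly definable condition thanks to the degree bound, giving $\mu_r(y)$.

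For \textit{(4)}, the irreducible components of $\overline{\psi(K,b)}$ are varieties of bounded degree in $K^{\abs{x}}$, so they are parameterized by the points of a fixed Chow-type space, itself a constructible set of $\LLr$-formulas. Taking $z$ to range over this parameter space yields the required $\rho(x,z)$: for each $c$, the fibre $\rho(K,c)$ is the variety encoded by $c$, and every irreducible component of $\overline{\psi(K,b)}$ arises this way for some $c$. Item \textit{(3)} then says that $\overline{\psi(K,b)}$ is itself irreducible, which one rewrites as the existence of a single $c$ in the parameter space with $\rho(K,c) = \overline{\psi(K,b)}$ --- a condition one checks is first-order in $b$ by combining the parameterization with \textit{(1)} and \textit{(2)}.

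The main obstacle is the Chow-style parameterization underlying \textit{(4)}: the uniform definability of the irreducible decomposition in a family of constructible sets. This is classical but not immediate; it is the technical core of \cite{VDD78} and is worked out model-theoretically in \cite{Joh16}. Once this is granted, items \textit{(1)}, \textit{(2)}, \textit{(3)} follow by boolean manipulations on the formulas so obtained.
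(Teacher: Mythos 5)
The paper does not prove this fact; it is stated as a citation of \cite[Fact 3.18]{Tra17}, which in turn rests on \cite{VDD78} and \cite{Joh16}. Your sketch fills in the standard argument behind those references (uniform degree bounds via quantifier elimination, the dominance-of-projections test for dimension, a Chow-style parameterization of irreducible components) and ultimately defers to the same sources for the technical core, so there is no meaningful divergence from the paper's route.
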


We obtain:

\begin{fact}\label{fact_multi_free_r_firstorder}
Let $\varphi(x,x',y)$ be any $\LLr$-formula, and $r = \abs{x}, t = \abs{x'}$. Then there exists a formula $\delta_\varphi^x(y)$ such that $K\models \delta_\varphi^x(b)$ if and only if the set $X\seq K^{r+t}$ defined by $\varphi(x,x',b)$ is multiplicatively $r$-free.
\end{fact}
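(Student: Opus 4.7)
My plan is to chain together, in a uniform way, the characterisations of multiplicative $r$-freeness already assembled in the paper --- Lemma~\ref{lm_char_r-free}, Remark~\ref{rk_ingredients}, and Fact~\ref{fact_ingredients} --- using the uniform definability of Zariski-theoretic invariants from Fact~\ref{fact_uniformdefzar} to carry all of this out inside a single $\LLr$-formula. First, by Lemma~\ref{lm_char_r-free}, $X := \varphi(K, b) \seq K^{r+t}$ is multiplicatively $r$-free if and only if its projection onto the first $r$ coordinates is; as this projection is uniformly defined by $\psi(x, y) := \exists x'\, \varphi(x, x', y)$, producing $\delta^x_\varphi(y)$ reduces to producing, for any $\LLr$-formula $\psi(x, y)$ with $|x| = r$, a formula $\delta_\psi(y)$ holding exactly when $\psi(K, b) \seq K^r$ is multiplicatively $r$-free.

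Next, I would apply Fact~\ref{fact_uniformdefzar}(4) to $\psi(x, y)$ to obtain a formula $\rho(x, z)$ such that every irreducible component of the Zariski closure of $\psi(K, b)$ is of the form $\rho(K, c)$ for some $c$. Using items (1)--(3) of the same fact I can then write a first-order formula $\mathrm{Comp}(z, y)$ expressing ``$\rho(K, z)$ is an irreducible component of the Zariski closure of $\psi(K, y)$'': irreducibility is Morley degree one, inclusion in the Zariski closure is a strict dimension drop of $\psi(K, y) \wedge \neg \rho(x, z)$ relative to $\psi(K, y)$, and maximality is a similar dimension comparison. Since multiplicative $r$-freeness of $\psi(K, b)$ is by definition the existence of $c$ with $\mathrm{Comp}(c, b)$ such that $\rho(K, c)$ is multiplicatively $r$-free, it now suffices to produce a uniform formula in $z$ expressing ``$\rho(K, z)$ is multiplicatively $r$-free''.

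For this last step, set $V := \rho(K, z)$, assumed irreducible. By Remark~\ref{rk_ingredients}, $V$ is multiplicatively $r$-free iff $V \cap (K^\times)^r \neq \emptyset$ and, for some (equivalently, any) $a \in V \cap (K^\times)^r$, the translate $a^{-1}(V \cap (K^\times)^r)$ --- which now contains $(1, \dots, 1)$ --- is multiplicatively $r$-free. By Fact~\ref{fact_ingredients}(4), this is equivalent to
\[
\Pi_{2r}\bigl(a^{-1}(V \cap (K^\times)^r)\bigr) = (K^\times)^r.
\]
Call the left-hand set $W_{z, a}$. It is definable uniformly in $z, a$ as a $2r$-fold existential projection of the $\rho$-family intersected with the torus, and is a definable subgroup of $(K^\times)^r$ by Fact~\ref{fact_ingredients}(3); so it equals the full torus iff $\dim W_{z, a} = r$, a first-order condition by Fact~\ref{fact_uniformdefzar}(1).

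Assembling these pieces yields $\delta^x_\varphi(y)$: the existence of $z$ with $\mathrm{Comp}(z, y)$ and of $a \in \rho(K, z) \cap (K^\times)^r$ with $\dim W_{z, a} = r$. The construction is essentially bookkeeping once the right characterisation of multiplicative $r$-freeness is chosen; the one genuinely nontrivial point is the step quantifying over irreducible components of the Zariski closure, but that is precisely what Fact~\ref{fact_uniformdefzar} is designed to provide, by giving a uniformly parametrised family of candidate components whose actual being-a-component is first-order in the parameters.
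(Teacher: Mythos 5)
Your proposal is correct and follows essentially the same route as the paper's proof: reduce to the projection $\psi(x,y) = \exists x'\,\varphi(x,x',y)$ via Lemma~\ref{lm_char_r-free}(4), quantify over irreducible components of the Zariski closure via the formula $\rho(x,z)$ from Fact~\ref{fact_uniformdefzar}, translate by a torus point $a$ to bring $(1,\dots,1)$ into the component, and then apply Fact~\ref{fact_ingredients}(4). You are slightly more explicit than the paper in two places: you spell out that ``$\rho(K,z)$ is an irreducible component of the Zariski closure of $\psi(K,y)$'' must itself be expressed as a formula $\mathrm{Comp}(z,y)$ using the definability of dimension and Morley degree (the paper implicitly folds this into the choice of $\rho$), and you test $\Pi_{2r}(a^{-1}(V\cap(K^\times)^r)) = (K^\times)^r$ by checking that the definable subgroup has dimension $r$ rather than writing out the set equality directly; both are valid, the latter relying on the observation that a proper definable subgroup of $(K^\times)^r$ (cut out by multiplicative equations, Fact~\ref{fact_ingredients}(2)) has dimension $< r$. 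These are clarifications of the same argument, not a genuinely different one.
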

\begin{proof}
Let $\psi(x,y) = \exists x' \varphi(x,x',y)$. By Lemma \ref{lm_char_r-free} (4), it is enough to show that ``$\psi(x,y)$ defines a multiplicatively $r$-free set" is definable in $y$. Let $b\in K^y$, by Fact \ref{fact_ingredients} and Remark \ref{rk_ingredients}, if $V$ is an irreducible component of the Zariski closure of $\psi(K,b)$, then $V$ is multiplicatively large if and only if $\Pi_{2r}(a^{-1} (V\cap (K^{\times})^{r}) = (K^{\times})^{r}$, for some $a\in V\cap (K^{\times})^{r}$. Let $\rho(x,z)$ be as in Fact~\ref{fact_uniformdefzar}. Let $\delta_\varphi^x$ be a formula expressing the following:
\[``\exists c [\exists a\in \rho(K,c)\cap (K^{\times})^{r}]  \wedge [\Pi_{2r}(a^{-1} (\rho(K,c)\cap (K^{\times})^{r}) = (K^{\times})^{r}]"\]
\end{proof}
 
An easy compactness argument yields the following analogue of \cite[Corollary 3.20]{Tra17}.

\begin{corollary}\label{cor_boundwith11}
Let $\varphi(x,x',y)$ be any $\LLr$-formula, and $r = \abs{x}, t = \abs{x'}$. Assume that for all $b\in K^y$, $V_b = \varphi(K,b)\seq K^{r+t}$ defines an affine variety such that $(1,\ldots,1)\in W_b $ where $W_b$ is the Zariski closure of $\Proj(V_b)\seq K^r$. Then there exists a finite set $F\seq \Z^{r}$ such that for all $b\in K^{y}$, either $W_b$ is included in the zero set of $x_1^{k_1}\ldots x_r^{k_r} = 1$ for some $(k_1,\ldots,k_r)\in F$
or $V_b$ is multiplicatively $r$-free.
\end{corollary}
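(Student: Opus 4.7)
The plan is to combine the algebraic characterisation of multiplicative $r$-freeness from Fact~\ref{fact_ingredients}~(1) with the uniform definability result of Fact~\ref{fact_multi_free_r_firstorder}, via a standard compactness argument in $\ACF$. The role of the hypothesis $(1,\ldots,1)\in W_b$ is to normalise the family of potential obstructions so that they are parameter-free, which is exactly what allows a fixed finite $F\subseteq \Z^{r}$ to witness the dichotomy across all $b$.

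First I would reformulate the failure of multiplicative $r$-freeness. By Lemma~\ref{lm_char_r-free}~(4), $V_b$ is multiplicatively $r$-free if and only if $W_b$ is. Applied to $W_b \subseteq K^{r}$, Fact~\ref{fact_ingredients}~(1) states that $W_b$ fails to be multiplicatively $r$-free exactly when $W_b$ is contained in the zero set of some equation $x_1^{k_1}\cdots x_r^{k_r}=c$ with $(k_1,\ldots,k_r)\in \Z^{r}\setminus\{0\}$ and $c\in K$. Evaluating this equation at the point $(1,\ldots,1)\in W_b$ forces $c=1$. Hence for every $b$, $V_b$ is not multiplicatively $r$-free if and only if there exists $(k_1,\ldots,k_r)\in\Z^{r}\setminus\{0\}$ with $W_b \subseteq \{x_1^{k_1}\cdots x_r^{k_r}=1\}$.

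Next I would appeal to compactness. By Fact~\ref{fact_multi_free_r_firstorder}, ``$V_b$ is not multiplicatively $r$-free'' is expressible by a single $\LLr$-formula $\neg\delta_\varphi^x(y)$. For each $k=(k_1,\ldots,k_r)\in\Z^{r}\setminus\{0\}$, the condition ``$W_b \subseteq \{x_1^{k_1}\cdots x_r^{k_r}=1\}$'' is also $\LLr$-definable uniformly in $b$: Fact~\ref{fact_uniformdefzar}~(3)--(4) produces a uniform description of the irreducible components of the Zariski closure of $\Proj(V_b)$, and containment of an affine variety in a fixed hypersurface is then expressed by the vanishing of the defining polynomial on a generic of each component. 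Writing $\psi_k(y)$ for this formula, the previous paragraph yields, for every $b$ satisfying the standing hypothesis, the equivalence $\neg\delta_\varphi^x(b)\ \leftrightarrow\ \bigvee_{k\in\Z^{r}\setminus\{0\}}\psi_k(b)$. Passing to a sufficiently saturated elementary extension and applying compactness extracts a finite subset $F\subseteq\Z^{r}\setminus\{0\}$ such that already $\neg\delta_\varphi^x(y)\leftrightarrow\bigvee_{k\in F}\psi_k(y)$; descending by elementarity, the same $F$ works in $K$, which is the required dichotomy.

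I do not anticipate a serious obstacle: the algebraic input is Fact~\ref{fact_ingredients}~(1), and the only genuine definability content is Fact~\ref{fact_multi_free_r_firstorder}, both of which are granted. The only point requiring a little care is verifying that ``$W_b$ is contained in a given hypersurface $\{x_1^{k_1}\cdots x_r^{k_r}=1\}$'' is uniformly $\LLr$-definable in $b$; this reduces, via Fact~\ref{fact_uniformdefzar}, to a routine check on the definable family of components of the Zariski closure of $\Proj(V_b)$.
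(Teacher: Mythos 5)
Your proof is correct and follows essentially the same path as the paper's: Fact~\ref{fact_ingredients}(1) to reduce failure of $r$-freeness to containment in a hypersurface, the hypothesis $(1,\ldots,1)\in W_b$ to pin down the constant to $c=1$, and Fact~\ref{fact_multi_free_r_firstorder} plus compactness to extract a finite set $F$. The paper states this as a one-line derivation from those two facts (prefaced by ``an easy compactness argument''); your write-up simply fills in the compactness step explicitly, including the observation that $W_b\subseteq\{x_1^{k_1}\cdots x_r^{k_r}=1\}$ is uniformly definable in $b$, which is indeed the only point that needs checking.
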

\begin{proof}
This follows from Fact \ref{fact_ingredients} (1) and Fact \ref{fact_multi_free_r_firstorder}, as the Zariski closure of the projection of a variety is again a variety. As $(1,\ldots,1)\in W_b$, Fact \ref{fact_ingredients} (1) applies with $c$ is equal to $1$.
\end{proof}




\subsection{Reduction to affine curves}\label{subsec:reductiontoaffinecurves}

We now show that the geometric characterisation given in Theorem \ref{thm_genhom_mult} can be reduced to the case where $V\seq K^{2r+2t}$ is a multiplicatively $r$-free \emph{curve}, i.e. of dimension $1$. This section is basically a rewriting of \cite[Section 4.1]{Tra17}, where we adapt the argument to include the case where $V\seq K^{r+t}$ is not only multiplicatively $r+t$-free, but $r$-free.

 For any $b\in K^n\setminus \set{(0,\ldots,0)}$ we denote by $H_b$ the affine hyperplane defined by the equation \[b_1x_1+\ldots+b_nx_n = 1.\] 
For any $c\in K^n\setminus \set{(0,\ldots,0)}$, let $S_c$ be the set of $b\in K^n\setminus \set{(0,\ldots,0)}$ such that $c\in H_b$. Note that $S_c = H_c$ for all $c$. The following is \cite[Lemma 4.3]{Tra17}, and uses Bertini's theorem.

\begin{fact}\label{fact_tran_bertini}
Let $V\seq K^n$ be an affine variety of dimension $m+1$. Then there is $c\in V$ such that the set $Y_c$ of tuples $(b_1,\ldots,b_m)\in (K^n\setminus \set{(0,\ldots,0)})^m$ such that the closed set
\[V\cap H_{b_1}\cap \ldots \cap H_{b_m}\]
is of Morley degree $1$, of dimension $1$ and $c$ belongs to its maximal component, is Zariski dense in $S_c^m$. If in addition $X\seq V$ is definable with $\dim X < \dim V$, then the set of $(b_1,\ldots,b_m)\in Y_c$ such that $X\cap H_{b_1}\cap \ldots \cap H_{b_m}$ is of dimension $0$ is also Zariski dense in $S_c^m$.
\end{fact}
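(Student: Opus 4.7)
The plan is to prove this by induction on $m\geq 0$, following the strategy of \cite[Lemma 4.3]{Tra17}. The base case $m=0$ is immediate: $V$ is already an irreducible affine curve, so has dimension $1$ and Morley degree $1$, and any $c\in V$ lies on its unique top-dimensional component; the empty tuple trivially forms a ``Zariski dense'' subset of the one-point space $S_c^0$.

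For the inductive step, the key tool is Bertini's theorem applied to the linear system of hyperplanes through a fixed point. I would first pick a smooth point $c\in V$, which exists because $V$ is irreducible over the algebraically closed field $K$ and its smooth locus is a nonempty Zariski open subset. Hyperplanes through $c$ are parametrized by $S_c=\set{b\in K^n\mid b\cdot c=1}$, which is an affine hyperplane isomorphic to $K^{n-1}$, and the only base point of this linear system on $V$ is $c$ itself. Bertini's theorem then supplies a nonempty Zariski open $U_1\seq S_c$ such that for every $b_1\in U_1$, the intersection $V\cap H_{b_1}$ is an irreducible affine variety of dimension $m$ with $c$ as a smooth point. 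By the inductive hypothesis applied to $V\cap H_{b_1}$ at the smooth point $c$, there is a Zariski dense subset $Y^{b_1}\seq S_c^{m-1}$ of tuples $(b_2,\dots,b_m)$ making the further intersection a curve of Morley degree $1$ containing $c$ on its top component. Because the conditions defining $Y_c$ (irreducibility of the top component, its dimension, and containment of $c$) are first-order definable uniformly in $(b_1,\dots,b_m)$, the set $Y_c$ is constructible in $S_c^m$ and contains a dense open subset of each fiber of the projection $S_c^m\to S_c$ above $U_1$; a standard constructibility argument then yields that $Y_c$ is Zariski dense in $S_c^m$.

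For the additional claim about $X\seq V$ with $\dim X<\dim V=m+1$: an iterated application of the classical dimension-drop lemma (a generic hyperplane section of an irreducible variety $W$ not contained in the hyperplane has dimension $\dim W-1$) gives $\dim(X\cap H_{b_1}\cap\ldots\cap H_{b_m})\leq \dim X-m\leq 0$ for $(b_1,\ldots,b_m)$ in a nonempty Zariski open subset of $S_c^m$. The intersection of this open set with the dense $Y_c$ inside the irreducible variety $S_c^m$ remains Zariski dense. The main obstacle is the careful formulation of Bertini's theorem in the ``through a fixed point'' form with uniform control in the induction parameter; in characteristic $0$ the standard formulation suffices, but some care is required in positive characteristic to avoid inseparability pitfalls, which I would handle by invoking \cite{Tra17} directly.
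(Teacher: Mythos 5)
The paper does not prove this statement: it is labelled a \emph{Fact} and cited verbatim as \cite[Lemma 4.3]{Tra17}, with the surrounding text only remarking that the proof ``uses Bertini's theorem.'' Your reconstruction — induction on $m$, generic hyperplane sections through a fixed smooth point, Bertini irreducibility — is consistent with that remark and is, in all likelihood, the actual strategy in \cite{Tra17}, so the route is the right one.

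Two points nonetheless deserve attention. First, in the inductive step you apply the inductive hypothesis ``at the smooth point $c$,'' but the statement as formulated only asserts the existence of \emph{some} $c$; you are implicitly using the stronger claim that $Y_c$ is dense in $S_c^m$ for \emph{every} smooth $c\in V$. This strengthening is exactly what propagates through the induction (since a generic hyperplane through a smooth point $c$ of $V$ meets $V$ in a subvariety again smooth at $c$), and it should be stated explicitly as the statement you are inducting on; the base case $m=0$ remains trivial. Second, your treatment of the positive-characteristic Bertini obstacle — ``handle by invoking \cite{Tra17} directly'' — is circular: if the key lemma is discharged by citing \cite{Tra17}, the whole argument collapses to the citation the paper already gives. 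The circularity is avoidable, since Bertini irreducibility for hyperplane pencils (equivalently, for the linear projection from $c$, after removing the base point) is characteristic-free in the form due to Jouanolou; but as written, the sketch leans on the very reference it set out to reprove, and you should replace that appeal with the characteristic-free Bertini statement.

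Your handling of the second assertion (the dimension drop for $X$) is fine: the generic $m$-fold hyperplane section through $c$ drops the dimension of each irreducible component of $\ol{X}$ by one per step until it is $0$-dimensional, and intersecting the resulting dense open subset of $S_c^m$ with the dense constructible $Y_c$ inside the irreducible $S_c^m$ preserves density.
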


\begin{lemma}
Let $n = r+t$ and $V\seq (K^\times)^{r}\times K^t$ be a multiplicatively $r$-free affine variety of dimension $\RM(V) = m+1$. Then there exists $(b_1,\ldots,b_m)\in (K^n\setminus \set{(0,\ldots,0)})^m$ such that \[W = V\cap H_{b_1}\cap \ldots \cap H_{b_m}\]
is such that $\dim(W) = \DM(W) = 1$ and such that its (unique) irreducible component of dimension $1$ is a multiplicatively $r$-free affine curve.
\end{lemma}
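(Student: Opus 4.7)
The plan is to adapt the argument of \cite[Section 4.1]{Tra17} (which handles the multiplicatively $n$-free case, $t=0$) to the present multiplicatively $r$-free setting, by combining Bertini's theorem (Fact~\ref{fact_tran_bertini}) with the finiteness provided by Corollary~\ref{cor_boundwith11}. The obstruction is that the unique $1$-dimensional component $V_{0,b}$ of the generic slice $V_b := V\cap H_{b_1}\cap\ldots\cap H_{b_m}$ might lie in some multiplicative hypersurface $\{x_1^{k_1}\dots x_r^{k_r} = d\}$; after a multiplicative translation bringing a chosen $K$-point of $V$ to $(1,\dots,1)$, the corollary restricts the relevant $k$'s to a finite set $F$, and the second clause of Fact~\ref{fact_tran_bertini} then eliminates each such degeneracy simultaneously.

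First, I would reduce to the case $c := (1,\dots,1) \in V$. By dropping any of the last $t$ coordinates on which $V$ projects identically to $0$ (which does not affect multiplicative $r$-freeness and only shrinks the ambient dimension $n$), we arrange $V \cap (K^\times)^n \neq \emptyset$, pick $a$ in this intersection, and replace $V$ by $a^{-1}V$ coordinatewise. Dimension and multiplicative $r$-freeness are preserved by the coordinatewise extension of Remark~\ref{rk_ingredients}, while the correspondence between hyperplanes $H_b$ for $V$ and for $a^{-1}V$ is an invertible linear map on $b$ preserving $K^n \setminus \{0\}$, so the conclusion for $a^{-1}V$ transfers back. Then Fact~\ref{fact_tran_bertini} produces a Zariski dense subset $Y_c \seq S_c^m$ such that for $b \in Y_c$, $V_b$ has dimension and Morley degree $1$ with $c$ in its unique $1$-dimensional component $V_{0,b}$. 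For every $b \in S_c^m$ we have $c \in V_b$, hence $(1,\dots,1) \in W_b := \overline{\Proj(V_b)}$.

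Applying Corollary~\ref{cor_boundwith11} to the parametric family $(V_b)_{b \in S_c^m}$ (or, more precisely, to the associated definable family of $1$-dimensional irreducible components, which is uniformly available by Fact~\ref{fact_uniformdefzar}(4)) yields a finite set $F \seq \Z^r \setminus \{0\}$ such that for each $b \in S_c^m$, either $V_b$ is multiplicatively $r$-free, or $W_b \seq \{x_1^{k_1}\dots x_r^{k_r} = 1\}$ for some $k \in F$. For each $k \in F$, the variety $X_k := V \cap \{x^k = 1\}$ has $\dim X_k < \dim V = m+1$ by multiplicative $r$-freeness of $V$ together with Fact~\ref{fact_ingredients}(1); then the second clause of Fact~\ref{fact_tran_bertini} with $X = X_k$ delivers a Zariski dense, constructible subset $Z_k \seq Y_c$ on which $X_k \cap H_{b_1}\cap\ldots\cap H_{b_m}$ has dimension $0$. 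Since $F$ is finite and $S_c^m$ is irreducible, $\bigcap_{k \in F} Z_k$ remains Zariski dense and in particular nonempty. Picking any $b$ in that intersection, if $V_b$ failed to be multiplicatively $r$-free then $V_b \seq X_k \cap H_{b_1}\cap\ldots\cap H_{b_m}$ for some $k \in F$, a set of dimension $0$, contradicting $\dim V_b = 1$. So $V_{0,b}$ is the desired multiplicatively $r$-free curve (after translating back through $a$). The main technical nuance is a clean application of Corollary~\ref{cor_boundwith11} when $V_b$ is only generically an irreducible variety, which is handled precisely by passing to the definable family of its $1$-dimensional irreducible components.
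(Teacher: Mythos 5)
Your proof is correct and follows essentially the same route as the paper's: translate so that $(1,\dots,1)\in \Proj(V)$, apply Bertini (Fact~\ref{fact_tran_bertini}) to obtain the dense family $Y_c$ of good slices, invoke Corollary~\ref{cor_boundwith11} on the definable family of $1$-dimensional components (via Fact~\ref{fact_uniformdefzar}) to get a finite exceptional set $F$, and then use the second clause of Fact~\ref{fact_tran_bertini} to eliminate the exceptional multiplicative hypersurfaces. The only cosmetic difference is that you apply the second Bertini clause once per $k\in F$ and intersect the resulting dense constructible sets, whereas the paper gathers all of them into a single $X = (U\times K^t)\cap V$ with $U = \bigcup_{k\in F}\{x_1^{k_1}\cdots x_r^{k_r}=1\}$ and applies the clause once.
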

\begin{proof}
Let $c\in V$ be as in Fact \ref{fact_tran_bertini}. For all $b = (b_1,\ldots,b_m)\in Y_c$, the Zariski closed set $W_b = V\cap H_{b_1}\cap \ldots \cap H_{b_m}$ is an affine curve containing $c$. Let $c' = (c_1^{-1},\ldots,c_r^{-1}, c_{r+1},\ldots,c_{r+t})$. By considering $V' = c'V$ (multiplication coordinatewise), and changing $c$ to $(1,\ldots,1,c_{r+1}^2,\ldots,c_{r+t}^2)$, we may assume that $(1,\ldots,1)$ belongs to $\Proj(W_b)\seq K^r$, for any $b\in Y_c$. By Fact \ref{fact_uniformdefzar}, the set $Y_c$ is definable over $c$. Also by Fact \ref{fact_uniformdefzar}, there exists an $\LLr$-formula $\varphi(x,x',z)$ such that for all $b\in K^{nm}$, $\varphi(K,b)\seq K^r\times K^t$ is empty if $b\notin Y_c$ and equals the (unique) maximal component of $W_b$ for $b\in Y_c$. 
Let $F\seq \Z^r$ be a finite set of size $N$ as in Corollary \ref{cor_boundwith11}, and let $U\seq K^r$ be the set of realisations of the formula $\bigvee_{k=(k_1,\ldots,k_r)\in F} x_1^{k_1}\ldots x_r^{k_r} = 1$ and $X = (U\times K^t)\cap V$. Then for all $b\in Y_c$, $W_b\seq X$ if and only if $W_b$ is not multiplicatively $r$-free. Note that $\dim X < \dim V$ because $V$ is multiplicatively $r$-free, so by the second part of Fact \ref{fact_tran_bertini}, the set of $b\in Y_b$ such that $X\cap H_{b_1}\cap \ldots \cap H_{b_m}$ is of dimension $0$ is Zariski dense in $Y_b$. So for $b$ in this set, $W_b $ is not included in $X\cap H_{b_1}\cap \ldots \cap H_{b_m}$, so $W_b$ is not included in $X$, so $W_b$ is multiplicatively $r$-free.
\end{proof}

\begin{theorem}\label{thm_reductiontoaffinecurves}
  $(K,\theta)$ is an existentially closed model of $T$ if and only if for all m-varieties $\tau\seq K^n$ over $K$ and for all affine curves $C\seq \tau\times \tau^\theta$ which projects m-generically onto $\tau$, there exists a tuple $a$ from $K^n$ such that $(a,\theta(a))\in C$.
\end{theorem}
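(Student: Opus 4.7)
The theorem refines the geometric criterion of Theorem \ref{thm_genhom_mult_version_axioms} by restricting the test class from all multiplicatively $r$-free affine varieties to affine curves. My plan is to prove the two directions separately: one is a trivial specialization of the existing criterion, while the other relies on a reduction to dimension one via the Bertini-type cuts provided by the preceding lemma.

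The forward direction is immediate: if $(K,\theta)$ is existentially closed, then Theorem \ref{thm_genhom_mult_version_axioms} applied to $V = C$, an affine curve in $\tau\times\tau^\theta$ projecting m-generically onto $\tau$, directly yields $a\in K^n$ with $(a,\theta(a))\in C$.

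For the converse, I assume the curve condition and show that the general criterion of Theorem \ref{thm_genhom_mult_version_axioms} holds. Fix a complete system of minimal equations $\tau(x;y)$ with $\abs{x} = r$ and $\abs{y} = t$, and a multiplicatively $r$-free affine variety $V\subseteq \tau\times \tau^\theta$. Since $\tau$ is a shifted torus, $V\subseteq (K^\times)^{2n}$, and I may view $V$ inside $(K^\times)^r\times K^{2n-r}$ with the first $r$ coordinates corresponding to the $x$-variables of $\tau$. If $\dim V = 1$, then $V$ itself is an affine curve projecting m-generically onto $\tau$ and the hypothesis provides the required $a$. Otherwise, with $m = \dim V - 1 \geq 1$, apply the preceding lemma to obtain $b_1,\ldots, b_m\in K^{2n}\setminus\set{0}$ such that $V\cap H_{b_1}\cap\cdots\cap H_{b_m}$ has a unique irreducible component $C$ of dimension $1$ which is multiplicatively $r$-free. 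Then $C\subseteq V\subseteq \tau\times\tau^\theta$ is an affine curve projecting m-generically onto $\tau$, and the curve hypothesis yields $a\in K^n$ with $(a,\theta(a))\in C\subseteq V$.

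The main obstacle is essentially notational bookkeeping: one must match the coordinate conventions of the preceding lemma --- stated for a variety in $(K^\times)^r\times K^t$ with the first $r$ coordinates as the ``free'' part --- with our setup where $V$ lives in $K^{2n} = K^r\times K^t\times K^r\times K^t$ and the free coordinates are those corresponding to the $x$-variables of $\tau$. No new geometric content is needed beyond the lemma; in particular, intersecting with the hyperplanes $H_{b_i}$ automatically preserves containment in $\tau\times \tau^\theta$, so the resulting curve $C$ inherits the ambient m-torus structure for free.
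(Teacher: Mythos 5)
Your proof is correct and follows the paper's intended approach: the theorem is stated without an explicit proof precisely because it follows immediately from the preceding Bertini-type lemma together with Theorem \ref{thm_genhom_mult_version_axioms}, by the reduction you describe. Your bookkeeping remarks about coordinates are also on point, and the case $\dim V = 0$ (which you leave implicit) only arises when $r = 0$, in which case the $\gcd$ condition in the definition of a complete system of minimal equations forces all $n_i = 1$, so $\tau$ and $\tau^\theta$ are singletons $\{c\}$ and $\{\theta(c)\}$ and the conclusion holds trivially.
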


\subsection{ACFH, completions and types}

\begin{definition}
Let ACFH be the $\LL$-theory expanding $T$ and expressing the geometric axioms (Theorem \ref{thm_genhom_mult}): for all formulas $\tau(x,y,z)$ and $\varphi(x,y,x',y',z')$ with $\abs{x} = \abs{x'}$ and $\abs{y} = \abs{y'}$ such that for any $c$, $d$, $\tau(x;y,c)$ is a complete system of minimal equations and $\varphi(x,y,x',y',d)$ defines a multiplicatively $\abs{x}$-free variety, if \[\varphi(x,y,x',y',d)\rightarrow \tau(x,y,c)\wedge \tau(x',y',\theta(c)),\] then $\varphi(x,y,\theta(x),\theta(y),c)$ is consistent.
\end{definition}

\begin{corollary}\label{cor_modelcomplete}
$(K,\theta)\models \ACFH$ if and only if $(K,\theta)$ is an existentially closed model of $T$. In particular, ACFH is model-complete.
\end{corollary}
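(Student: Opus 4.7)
The plan is to argue that the axiom scheme defining $\ACFH$ is genuinely first-order and literally encodes the characterisation of e.c. models from Theorem \ref{thm_genhom_mult_version_axioms}, and then to deduce model-completeness via Robinson's test. First I would verify that the axioms given in the definition of $\ACFH$ are indeed a first-order scheme in $\LL$. The syntactic shape of a complete system of minimal equations (Definition \ref{def:completesystemofminimalequations}) is fixed by the discrete data $(r,t,(n_i),N,(l_j),(k_j))$ together with the parameters $c$, so the quantification ``for all $c$ such that $\tau(x;y,c)$ is a complete system of minimal equations'' can be rewritten as a quantification over the finitely many tuples $c$ appearing in the template, with the (trivially first-order) side conditions on the integer exponents encoded into the choice of template. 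The second side condition --- that $\varphi(x,y,x',y',d)$ defines a multiplicatively $|x|$-free set --- is first-order in $d$ by Fact \ref{fact_multi_free_r_firstorder}.

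Next I would verify the equivalence. If $(K,\theta)\models \ACFH$, then unravelling the definition and combining it with Theorem \ref{thm_genhom_mult_version_axioms} shows that every multiplicatively $r$-free affine variety $V\subseteq \tau\times\tau^\theta$ has a point of the form $(a,\theta(a))$, so $(K,\theta)$ is an existentially closed model of $T$. Conversely, if $(K,\theta)$ is e.c. in $T$, then the same theorem directly supplies all the required instances of the axioms of $\ACFH$. The only slight friction here is bookkeeping between the two ways of labelling tuples (the separated $(x;y)$ of complete minimal equations versus the joint variable in Theorem \ref{thm_genhom_mult}), and passing between varieties and their defining formulas using the definability of dimension, Morley degree and irreducibility from Fact \ref{fact_uniformdefzar}; these are routine.

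Finally, for model-completeness I would invoke Robinson's test: let $(K,\theta)\subseteq (L,\theta')$ be an extension of models of $\ACFH$. Since $\ACFH$ extends $T$, $(L,\theta')$ is an extension of $(K,\theta)$ within models of $T$; and since $(K,\theta)\models \ACFH$ is existentially closed in $T$ by the equivalence just established, every quantifier-free $\LL$-formula with parameters in $K$ that is satisfiable in $(L,\theta')$ is already satisfiable in $(K,\theta)$. Hence every inclusion of models of $\ACFH$ is an elementary embedding, and $\ACFH$ is model-complete.

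The main point requiring care is not the logical manoeuvre --- it is the verification that the hypotheses of the axiom scheme are uniformly first-order, which is why the whole of Subsection \ref{subsection_geometricchar}, culminating in Fact \ref{fact_multi_free_r_firstorder}, is needed. Once that definability is in hand, both directions of the equivalence and the application of Robinson's test are essentially formal.
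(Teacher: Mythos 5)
Your proposal is correct and follows the same route the paper intends: the definition of $\ACFH$ is a direct first-order transcription of the geometric criterion in Theorem \ref{thm_genhom_mult_version_axioms} (made first-order via Fact \ref{fact_multi_free_r_firstorder} together with the syntactic description of complete systems of minimal equations), so the equivalence between models of $\ACFH$ and e.c. models of $T$ is immediate, and model-completeness then follows by Robinson's test since an extension of models of $\ACFH$ is in particular an extension of models of $T$. The paper leaves the corollary without an explicit proof precisely because, once the definability work of Subsection \ref{subsection_geometricchar} is in place, it is exactly the bookkeeping you describe.
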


\begin{corollary}\label{cor_modelcompanion}
The theory ACFH is the model-companion of $T$, $T_1$ and $T_0$.
\end{corollary}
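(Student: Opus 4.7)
The plan is to verify, for each of $T_0$, $T_1$, and $T$, the two defining properties of a model-companion: $\ACFH$ is model-complete and $\ACFH$ is mutually model-consistent with the base theory. Model-completeness has already been established in Corollary \ref{cor_modelcomplete}. In one direction, every model of $\ACFH$ is a model of $T$ by construction, hence a model of $T_1$ (since algebraically closed fields have divisible multiplicative group) and of $T_0$ (trivially). So the only nontrivial point is to embed every model of $T_0$ (and a fortiori of $T_1$ and $T$) into a model of $\ACFH$.

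Since every model of $T$ is a model of $T_1$ and every model of $T_1$ is a model of $T_0$, it suffices to handle the weakest theory $T_0$ and factor the embedding through $T$. Let $(K,\theta) \models T_0$ and set $L = K^\alg$. The multiplicative group $L^\times$ is divisible, hence injective in the category of abelian groups (the fact from the Preliminaries, item (3)), so the homomorphism $\theta : K^\times \to K^\times \hookrightarrow L^\times$ extends to an endomorphism $\tilde\theta$ of $L^\times$. Setting $\tilde\theta(0) = 0$ produces a multiplicative endomorphism of $L$, so that $(L,\tilde\theta) \models T$ extends $(K,\theta)$.

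It remains to embed an arbitrary $(K,\theta) \models T$ into a model of $\ACFH$. For this I would apply the standard construction of an existentially closed model of an inductive class. The class of models of $T$ is inductive: the union of an $\LL$-chain of models of $T$ is an algebraically closed field on which the $\theta_\alpha$'s agree to a well-defined multiplicative endomorphism. Iteratively realizing, at each stage, every quantifier-free $\LL$-formula over the current model which is consistent with its diagram together with $T$, across an ordinal of cofinality exceeding $|K| + |\LL|$ and taking unions at limits, yields an existentially closed model of $T$ extending $(K,\theta)$. By Corollary \ref{cor_modelcomplete} this is a model of $\ACFH$. The technical content having been discharged already in the geometric characterisation of e.c.\ models (Theorem \ref{thm_genhom_mult}) and its first-order axiomatisation (Section \ref{subsection_geometricchar}), there is no genuine obstacle remaining in this corollary, only the invocation of the standard correspondence between model-companions and first-order axiomatisable classes of e.c.\ models in inductive theories.
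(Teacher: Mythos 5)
Your proof is correct and follows essentially the same route as the paper: every model of $\ACFH$ is a model of $T$ (and hence of $T_1$ and $T_0$); every model of $T_0$ embeds into a model of $T$ by passing to the algebraic closure and extending $\theta$ via divisibility of the multiplicative group; and every model of $T$ embeds into an existentially closed one by a standard chain argument for inductive theories, which by Corollary \ref{cor_modelcomplete} is a model of $\ACFH$. The paper states the final step by referencing Claim \ref{claim_extension_sat_tau} together with ``a classical chain argument,'' while you invoke the general construction of e.c.\ models of an inductive class directly; this is the same mechanism, just phrased without appeal to the specific claim.
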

\begin{proof}
Every model of ACFH is a model of $T$, every model of $T$ is a model of $T_1$ and every model of $T_1$ is a model of $T_0$. Observe first that every model of $T_0$ extends to a model of $T$ and hence of $T_1$. Indeed, let $(F,\theta)$ be a model of $T_0$ and $K$ be the algebraic closure of $F$. As $K^\times $ is divisible, the endomorphism $\theta: F^\times \rightarrow F^\times\seq K^\times$ extends to an endomorphim $\theta': K^\times \rightarrow K^\times$. It remains to show that every model $(F,\theta)$ of $T$ has an extension which is a model of ACFH, which follows from Claim \ref{claim_extension_sat_tau} (in the proof of Theorem \ref{thm_genhom_mult}) and a classical chain argument.
\end{proof}

Let $\PP$ be the set of prime numbers.
\begin{corollary}[Uniformity]
For each prime number $p$ let $K_p$ be a model of ACFH of characteristic $p$. Then for any non-principal ultrafilter $\U$ on $\PP$, the ultraproduct $\Pi_\U K_p$ is a model of ACFH of characteristic $0$.
\end{corollary}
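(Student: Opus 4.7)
The plan is a direct application of \L o\'s's theorem to the ultraproduct $L = \prod_\U K_p$. The essential point captured by the name ``Uniformity'' is that \ACFH{} is axiomatised in the fixed language $\LL$ by a set of $\LL$-sentences that do not reference the characteristic of the ambient field; in particular, the geometric axioms (as stated in the definition preceding Corollary \ref{cor_modelcomplete}) use the uniform first-order definability of ``multiplicatively $r$-free'' given by Fact \ref{fact_multi_free_r_firstorder}, and the requirement that $\tau(x;y,c)$ is a complete system of minimal equations is a syntactic condition on $\tau$ that makes sense in any characteristic.

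First I would verify that $L$ is an algebraically closed field of characteristic $0$: the axioms of \ACF{} are first-order, so transfer by \L o\'s, and for each prime $q$ the sentence $\underbrace{1+\cdots+1}_{q\text{ times}} = 0$ holds in $K_p$ only for $p=q$, which is a single prime, so by non-principality of $\U$ this sentence fails in $L$. Next, since $\theta$ is interpreted coordinatewise in the ultraproduct and the defining properties of a multiplicative map ($\theta(xy) = \theta(x)\theta(y)$, $\theta(1)=1$, and $\theta(x)=0 \leftrightarrow x=0$) are first-order in $\LL$, we obtain $L \models T$.

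For the geometric axioms, each one is a single $\LL$-sentence of the form ``for all parameters $c,d$, if $\tau(x;y,c)$ is a complete system of minimal equations and $\varphi(x,y,x',y',d)$ defines a multiplicatively $\abs{x}$-free variety contained in $\tau \times \tau^\theta$, then $\exists x\, \exists y\, \varphi(x,y,\theta(x),\theta(y),c)$''. Since $K_p \models \ACFH$ for every prime $p$, each such $\LL$-sentence holds in every $K_p$, hence in $L$ by \L o\'s. Thus $L \models \ACFH$.

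The only genuine obstacle is the uniformity check, which ultimately reduces to the observation that the defining formula $\delta_\varphi^x$ from Fact \ref{fact_multi_free_r_firstorder} is a pure $\LLr$-formula, independent of the characteristic, as are the syntactic encodings of complete systems of minimal equations via the parametrised formulas $\tau$. Once this is granted, the whole argument is a one-line invocation of \L o\'s.
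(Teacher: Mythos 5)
Your proof is correct and matches the paper's approach: the paper's own proof is a one-line remark that the formula expressing ``$\varphi(x,x',y)$ defines a multiplicatively $r$-free affine variety'' does not depend on the characteristic, which is precisely the uniformity check you isolate at the end before invoking \L o\'s. You have simply spelled out the routine details (transfer of the ACF and $T$ axioms, computation of the characteristic via non-principality) that the paper leaves tacit.
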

\begin{proof}
The formula expressing the condition ``$\varphi(x,x',y)$ defines a multiplicatively $r$-free affine variety" does not depend on the characteristic.
\end{proof}

Corollary \ref{cor_modelcompanion} and Lemma \ref{lm_2am_embeddings} yield the following (see e.g. \cite[8.4, Exercise 9]{Hodges}):

\begin{theorem}\label{thm_complet_diag}
Let $(F,\theta)\models T$ and $(K_1,\theta_1), (K_2,\theta_2)$ be two models of ACFH extending $(F,\theta)$, then 
\[(K_1,\theta_1)\equiv_{F} (K_2,\theta_2).\]
In other words for any $(F,\theta)\models T$, the theory $\ACFH\cup \Diag(F)$ is complete.
\end{theorem}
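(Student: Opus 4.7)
The plan is to use the standard argument that a model companion of a theory with the amalgamation property is ``complete over diagrams.'' Concretely, I would deduce Theorem \ref{thm_complet_diag} by combining three facts established earlier in the paper: (a) ACFH is the model companion of $T$ (Corollary \ref{cor_modelcompanion}), so in particular every model of $T$ embeds into a model of ACFH; (b) ACFH is model complete (Corollary \ref{cor_modelcomplete}), so every $\LL$-embedding between models of ACFH is elementary; (c) $T$ has the amalgamation property, and in fact the stronger form proved in Lemma \ref{lm_2am_embeddings}.

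The argument proceeds as follows. Let $(F,\theta) \models T$ and suppose $(K_1,\theta_1), (K_2,\theta_2) \models \ACFH$ both extend $(F,\theta)$ (so both are models of $T$ containing $F$). By Lemma \ref{lm_2am_embeddings}, there exist a model $(L,\theta_L) \models T$ and $\LL$-embeddings $g_i : (K_i,\theta_i) \hookrightarrow (L,\theta_L)$ over $F$ for $i=1,2$. By Corollary \ref{cor_modelcompanion} every model of $T$ embeds into a model of ACFH, so we may extend $(L,\theta_L)$ to some $(M,\theta_M) \models \ACFH$. Composing with this extension yields $\LL$-embeddings $(K_i,\theta_i) \hookrightarrow (M,\theta_M)$ over $F$, between models of ACFH. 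By model-completeness of ACFH (Corollary \ref{cor_modelcomplete}), each of these embeddings is elementary. Therefore, for any $\LL$-formula $\varphi(\bar x)$ and any tuple $\bar a$ from $F$,
\[
(K_1,\theta_1) \models \varphi(\bar a) \iff (M,\theta_M) \models \varphi(\bar a) \iff (K_2,\theta_2) \models \varphi(\bar a),
\]
which is exactly the assertion $(K_1,\theta_1) \equiv_F (K_2,\theta_2)$.

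The reformulation in terms of completeness of $\ACFH \cup \Diag(F)$ is then immediate: any two models of this theory are of the form just considered (namely, models of ACFH containing $F$ as an $\LL$-substructure), so they satisfy the same $\LL(F)$-sentences. The only step that required any real work was the amalgamation Lemma \ref{lm_2am_embeddings}, which was proved earlier using Lemmas \ref{lm_uniqueextendoproduct} and \ref{lm_2aminclusion} together with the ``isomorphic copy'' trick in Remark \ref{rk_isomorphiccopy}; the current theorem is just the model-theoretic packaging of that content, and I do not expect any additional obstacle here.
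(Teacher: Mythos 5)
Your proof is correct and is precisely the argument the paper is alluding to: the paper proves the theorem by citing Corollary \ref{cor_modelcompanion}, Lemma \ref{lm_2am_embeddings}, and the standard exercise in Hodges on model completions, which is exactly the amalgamate-then-extend-then-use-model-completeness chain you spell out. No differences worth noting.
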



\begin{remark}
The theory ACFH is the \textit{model-completion} of $T$ \cite[8.4, Exercise 9]{Hodges}: $T'$ is the \textit{model-completion of $T$} if $T'$ is the model-companion of $T$ and $T$ has the amalgamation property. This is equivalent to $T'$ is the model-companion of $T$ and for all $M\models T$, the theory $T\cup \Diag(M)$ is complete. 
Note that if the model-companion of $T$ only depends on $(T)_\forall$, the model-completion does depend on $T$. ACFH is the model-companion of $T_0$, but it is not its model-completion: $\Id: \Q^\times \rightarrow \Q^\times$ has many extensions to $\Q^\alg$, each defining a different completion of $\ACFH \cup \Diag(\Q,\Id)$, so it is not complete. 
\end{remark}

\begin{remark}\label{remark_Fpbarclosed}
Let $(K,\theta)$ be a model of $T$ of characteristic $p>0$. As every element in $\oF p\setminus \set{0}$ is a root of $1$, the field $\oF p\seq K$ is closed under $\theta$, so $(\oF p,\theta\upharpoonright_{\oF p})\models T$. 
\end{remark}

\begin{corollary}[Completions of ACFH, positive characteristic]\label{cor:completionspositivecharacteristic}
Let $(K_1,\theta_1)$ and $(K_2,\theta_2)$ be two models of $\ACFH$ of characteristic $p>0$. Then
\[(K_1,\theta_1)\equiv (K_2,\theta_2)\iff  (\oF p,\theta_1\upharpoonright _{\oF p})\cong (\oF p,\theta_2\upharpoonright _{\oF p}).\]
\end{corollary}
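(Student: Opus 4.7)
The plan is to use the completeness result Theorem \ref{thm_complet_diag} for the $(\Leftarrow)$ direction, and a direct first-order analysis of the action of $\theta$ on roots of unity for $(\Rightarrow)$.

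For $(\Leftarrow)$: Let $\sigma\colon (\oF p,\theta_1\upharpoonright _{\oF p}) \xrightarrow{\sim} (\oF p,\theta_2\upharpoonright _{\oF p})$ be an $\LL$-isomorphism and set $(F,\theta) := (\oF p,\theta_1\upharpoonright _{\oF p})$. By Remark \ref{remark_Fpbarclosed}, $(F, \theta)$ is a model of $T$ and an $\LL$-substructure of $(K_1,\theta_1)$. Via the composition of $\sigma$ with the canonical inclusion $\oF p \hookrightarrow K_2$, the structure $(F,\theta)$ also embeds as an $\LL$-substructure of $(K_2,\theta_2)$. Hence both $(K_1,\theta_1)$ and $(K_2,\theta_2)$ are models of $\ACFH\cup \Diag(F)$, which is complete by Theorem \ref{thm_complet_diag}; in particular, they are elementarily equivalent in $\LL$.

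For $(\Rightarrow)$: For every $n\in\N$ coprime to $p$, the cyclic group $\mu_n(K_i) = \mu_n(\oF p)$ has order $n$ and is stable under $\theta_i$. I first observe that the induced endomorphism $\theta_i\upharpoonright _{\mu_n}$ is multiplication by a unique $k_n(i)\in\Z/n\Z$, canonical in the sense that $\theta_i(\zeta)=\zeta^{k_n(i)}$ for \emph{every} primitive $n$-th root of unity $\zeta$: indeed, if $\theta_i(\zeta)=\zeta^k$ and $\zeta'=\zeta^a$ is another primitive $n$-th root, then $\theta_i(\zeta')=\theta_i(\zeta)^a=\zeta^{ak}=(\zeta')^k$. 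For each $m\in\set{0,\ldots,n-1}$, the $\LL$-sentence
\[
\sigma_{n,m}\;:\;\exists x\left(x^n = 1 \wedge \bigwedge_{d\mid n,\ 0<d<n} x^d \neq 1 \wedge \theta(x) = x^m\right)
\]
then holds in $(K_i,\theta_i)$ if and only if $k_n(i)=m$. Elementary equivalence therefore gives $k_n(1)=k_n(2)$ for every $n$ coprime to $p$. As $\oF p^\times = \bigcup_{\gcd(n,p)=1} \mu_n(\oF p)$, this forces $\theta_1\upharpoonright _{\oF p} = \theta_2\upharpoonright _{\oF p}$ (both vanishing at $0$), so the two structures are literally equal, hence a fortiori isomorphic.

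I do not anticipate any serious obstacle; the crux is the first-order encoding of $k_n$, which works cleanly precisely because $k_n$ is independent of the choice of primitive root, so no coherent labelling of roots of unity across $K_1$ and $K_2$ is needed.
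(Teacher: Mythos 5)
Your $(\Leftarrow)$ direction is exactly the paper's: combine Remark \ref{remark_Fpbarclosed} with the completeness of $\ACFH\cup\Diag(\oF p,\theta)$ from Theorem \ref{thm_complet_diag}. Your $(\Rightarrow)$ direction is correct but takes a genuinely different route. The paper's proof is the abstract one: elementarily equivalent structures have isomorphic elementary extensions, and since $\oF p$ is the algebraic closure of the prime field it is preserved by any such isomorphism, which then restricts to the desired $\LL$-isomorphism. You instead encode the action of $\theta$ on roots of unity by first-order sentences $\sigma_{n,m}$ and read off from elementary equivalence that the exponents $k_n(1)$ and $k_n(2)$ agree for every $n$ coprime to $p$. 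This is a concrete and more informative proof: it makes explicit that the completion is entirely determined by the sequence $(k_n)_{\gcd(n,p)=1}\in\prod_n \Z/n\Z$, whereas the paper's one-line argument does not surface this invariant.

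One small point of rigour in your last step: the copies of $\oF p$ inside $K_1$ and $K_2$ are not literally the same set, so "$\theta_1\upharpoonright_{\oF p} = \theta_2\upharpoonright_{\oF p}$" is a slight abuse. What you actually show is that, once $k_n(1)=k_n(2)=:k_n$ for all $n$, \emph{any} field isomorphism $\iota:\oF p^{(1)}\to\oF p^{(2)}$ (which exists as both are algebraic closures of $\F_p$) intertwines $\theta_1$ and $\theta_2$, since for $\zeta\in\mu_n(\oF p^{(1)})$ one has $\iota(\theta_1(\zeta))=\iota(\zeta^{k_n})=\iota(\zeta)^{k_n}=\theta_2(\iota(\zeta))$. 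With that rephrasing the argument is watertight.
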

\begin{proof}
From left to right, there exists elementary extensions of $(K_1,\theta_1)$ and $(K_2,\theta_2)$ which are isomorphic. The right to left direction follows from Theorem \ref{thm_complet_diag} and Remark \ref{remark_Fpbarclosed}.
\end{proof}
In particular, each positive prime $p$ and $n\in \N$ defines a unique completion of ACFH given by $(\oF p, x\mapsto x^n)$, but there are many more endomorphisms of $\oF p ^\times$ that define a completion of ACFH.

\begin{definition}
Let $(F,\theta)\models T$ and $A\seq F$. We define the following operators $\cl_\theta^n(A)$ and $\cl_\theta(A)$ inductively as follows:
\begin{itemize}
    \item $\cl_\theta^0(A) = \ol{A}$;
    \item $\cl^{n+1}_\theta(A) = \ol{\cl_\theta^n(A)\cup \theta(\cl_\theta^n(A))}$;
    \item $\cl_\theta(A) = \bigcup_{n\in \N} \cl_\theta^n(A)$
\end{itemize}
We call $\cl_\theta(A)$ the \textit{$\theta$-closure of $A$}. 
\end{definition}
It is easy to check that for any $A\seq (F,\theta)\models T$, $\cl_\theta(A)$ is algebraically closed as a field, and is closed under $\theta$, hence $(\cl_\theta(A),\theta\upharpoonright_{\cl_\theta(A)})\models T$. We denote $\tp_\theta(A)$ the $\LL$-type of $A$, and $\acl_\theta(A)$ the $\LL$-algebraic closure of $A$. 

\begin{proposition}\label{prop:typesACFH}
Let $(K,\theta)\models \ACFH$ and $A,B\subseteq K$. Then $\tp_\theta(A) = \tp_\theta(B)$ if and only if there is an $\LL$-isomorphism 
\[\sigma : (\cl_\theta(A), \theta\upharpoonright_{\cl_\theta(A)}) \cong (\cl_\theta(B), \theta\upharpoonright_{\cl_\theta(B)})\]
such that $\sigma(A) = B$.
Furthermore, $\acl_\theta(A) = \cl_\theta(A)$.
\end{proposition}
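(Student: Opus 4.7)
The forward direction is a standard homogeneity argument. Working in a $\kappa$-saturated elementary extension $(K^*,\theta^*)\succeq (K,\theta)$ with $\kappa>|A|+|B|$, the assumption $\tp_\theta(A)=\tp_\theta(B)$ provides an $\LL$-automorphism $\sigma$ of $(K^*,\theta^*)$ sending $A$ to $B$. Since $\cl_\theta$ is built inductively from field algebraic closure and the $\LL$-definable operation $\theta$, its value on a subset of $K$ is absolute between $K$ and $K^*$ (as $K$ is algebraically closed and closed under $\theta$), and it is preserved by $\LL$-automorphisms; hence $\sigma$ restricts to an $\LL$-isomorphism $(\cl_\theta(A),\theta\upharpoonright_{\cl_\theta(A)})\to (\cl_\theta(B),\theta\upharpoonright_{\cl_\theta(B)})$ mapping $A$ onto $B$.

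For the converse, set $F_A:=\cl_\theta(A)$ and $F_B:=\cl_\theta(B)$; these are models of $T$. The given isomorphism $\sigma$ identifies the $\LL$-diagrams $\Diag(F_A,\theta\upharpoonright_{F_A})$ and $\Diag(F_B,\theta\upharpoonright_{F_B})$, sending constants for $A$ to constants for $B$. By Theorem \ref{thm_complet_diag} both theories $\ACFH\cup\Diag(F_A,\theta\upharpoonright_{F_A})$ and $\ACFH\cup\Diag(F_B,\theta\upharpoonright_{F_B})$ are complete, so, after relabelling via $\sigma$, they coincide. Therefore for every $\LL$-formula $\varphi(\bar x)$ and $\bar a\in A$ we have $(K,\theta)\models\varphi(\bar a)$ if and only if $(K,\theta)\models\varphi(\sigma(\bar a))$, which yields $\tp_\theta(A)=\tp_\theta(B)$.

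For the equality $\acl_\theta(A)=\cl_\theta(A)$, the inclusion $\cl_\theta(A)\subseteq\acl_\theta(A)$ is immediate from the inductive definition: field algebraic closure is $\LLf$-algebraic and $\theta$ is $\LL$-definable, so at each step one stays within $\acl_\theta(A)$. For the reverse inclusion, let $c\in K\setminus F$ where $F=\cl_\theta(A)$; we show $c\notin\acl_\theta(A)$ by producing arbitrarily many realisations of $\tp_\theta(c/A)$. As $F$ is algebraically closed as a field, $c$ is transcendental over $F$. By induction on $n\geq 1$, iteratively applying Remark \ref{rk_isomorphiccopy} and Lemma \ref{lm_2aminclusion}, we build a model $(L_n,\theta_n)\models T$ extending $(F,\theta\upharpoonright_F)$ containing pairwise distinct elements $c=c_1,c_2,\dots,c_n$ together with, for each $i$, an $\LL$-isomorphism over $(F,\theta\upharpoonright_F)$ from $(\cl_\theta(Fc),\theta\upharpoonright)$ to $(\cl_\theta(Fc_i),\theta\upharpoonright)$ sending $c$ to $c_i$. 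At the inductive step, the new copy is chosen algebraically independent from the current amalgam over $F$, which guarantees that $c_{n+1}$ is distinct from $c_1,\dots,c_n$. Embedding $(L_n,\theta_n)$ in a model of $\ACFH$ (Corollary \ref{cor_modelcompanion}) and applying the converse direction established in the previous paragraph, all $c_i$ realise $\tp_\theta(c/A)$. Hence $\tp_\theta(c/A)$ has arbitrarily many realisations and $c\notin\acl_\theta(A)$. The only delicate point is maintaining distinctness of the conjugates along the iteration, which is precisely what the algebraic-independence clause of Remark \ref{rk_isomorphiccopy} provides.
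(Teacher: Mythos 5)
Your proof is correct in substance, but it diverges from the paper's argument at two points worth noting.

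For the converse of the type-equality statement, you argue directly from the completeness of $\ACFH\cup\Diag(F_A)$ and $\ACFH\cup\Diag(F_B)$, identified via $\sigma$, to read off $(K,\theta)\models\varphi(\bar a)\Leftrightarrow(K,\theta)\models\varphi(\sigma(\bar a))$. The paper instead extends $\sigma$ to a field automorphism of $K$, conjugates $\theta$ to $\theta'=\sigma^{-1}\theta\sigma$, uses Theorem \ref{thm_complet_diag} to get $(K,\theta)\equiv_{\cl_\theta(A)}(K,\theta')$, and then produces an $\LL$-automorphism by composing with a further isomorphism $\sigma'$. Your route reaches the same conclusion without having to manufacture the automorphism (hence with no implicit appeal to saturation or homogeneity for that step); both approaches rest on Theorem \ref{thm_complet_diag}, but yours is the more economical deduction.

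For the part $\acl_\theta(A)=\cl_\theta(A)$, the strategies differ in a way that leaves a small gap in yours. The paper takes an isomorphic copy $(B',\theta')$ of $\cl_\theta(Ab)$ algebraically free from $K$ over $F$, and amalgamates it with $(K,\theta)$ itself via Lemma \ref{lm_2aminclusion}; the resulting $(L,\theta_L)\models\ACFH$ then extends $(K,\theta)$ and, by model-completeness of ACFH (Corollary \ref{cor_modelcomplete}), is an elementary extension, so $\tp^L_\theta(b/A)=\tp^K_\theta(b/A)$ for free. You instead amalgamate copies of $\cl_\theta(Fc)$ with one another, without $K$ in the picture, and embed the resulting $(L_n,\theta_n)$ into some $(M,\theta_M)\models\ACFH$. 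You then assert that the $c_i$ realise $\tp_\theta(c/A)$, but this needs a justification for why $\tp^M_\theta(c/A)=\tp^K_\theta(c/A)$: $M$ and $K$ are not a priori related by an embedding. The fix is a second appeal to Theorem \ref{thm_complet_diag} — both $M$ and $K$ are models of the complete theory $\ACFH\cup\Diag(\cl_\theta(Fc),\theta\upharpoonright)$, so they agree on $\LL(\cl_\theta(Fc))$-sentences, hence on $\tp(c/A)$. This is not hard, but it is a step you skipped, and the paper's device of keeping $K$ inside the amalgam sidesteps it entirely. Everything else, including your explicit attention to the distinctness of the conjugates along the iteration (which the paper handles with a brief "by reiterating"), is fine.
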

\begin{proof}
We may assume that $(K,\theta)$ is sufficiently saturated. The left to right implication is standard, as $\cl_\theta(A)$ is closed under $\theta$. Conversely, assume that $\sigma : \cl_\theta(A)\rightarrow  \cl_\theta(B)$ is such that $\sigma(A) = B$. As $\sigma$ is a field isomorphism, it extends to a field automorphism of $K$. Let $\theta' = \sigma^{-1}\circ\theta\circ \sigma$. Then $\theta'\upharpoonright_{\cl_\theta(A)} = \theta\upharpoonright_{\cl_\theta(A)}$. By Theorem \ref{thm_complet_diag}, $(K,\theta)\equiv_{\cl_\theta(A)} (K,\theta')$. This implies that there is a field automorphism $\sigma'$ of $K$ over $\cl_\theta(A)$ which is an $\LL$-isomorphism $\sigma':(K,\theta)\cong (K,\theta')$, so $\theta = \sigma'^{-1}\circ \theta'\circ\sigma'$. Now $\sigma\circ\sigma'$ is an $\LL$-automorphism of $K$ extending $\sigma\upharpoonright_{\cl_\theta(A)}$, so $\tp_\theta(A) = \tp_\theta(B)$.

It is clear that $\cl_\theta(A)\seq \acl_\theta(A)$. For the other direction, assume that $A = \cl_\theta(A)$ and $b\notin A$. Let $B = \cl_\theta(bA)$. By Remark \ref{rk_isomorphiccopy}, there exists $B'$ and an endomorphism $\theta'$ of $B'$ such that we have $\sigma:(B',\theta')\cong_A (B,\theta)$ with $B'\indi{\alg}_A\ K$. Let $b'\in B'$ be the preimage of $b$ by the $\LL$-isomorphism $\sigma$ over $A$. We have $B' = \cl_{\theta'}(Ab')$, and $b\neq b'$. By Lemma \ref{lm_2aminclusion}, there exists a model $(L,\theta_L)$ of ACFH extending both $(B',\theta')$ and $(K,\theta)$. Applying the previous result (in $(L,\theta_L)$), we conclude that that $\tp_\theta(b/A) = \tp_\theta(b'/A)$, and $b'\neq b$. By reiterating we may construct unboundedly many realisations of $\tp_\theta(b/A)$, hence $b\notin \acl_\theta(A)$.
\end{proof}

\begin{corollary}[Completions of ACFH, characteristic $0$]
Let $(K_1,\theta_1)$ and $(K_2,\theta_2)$ be two models of ACFH of characteristic $0$. Then
\[(K_1,\theta_1)\equiv (K_2,\theta_2)\iff  (\cl_\theta(\Q),\theta_1\upharpoonright _{\cl_\theta(\Q)})\cong (\cl_\theta(\Q),\theta_2\upharpoonright _{\cl_\theta(\Q)}).\]
\end{corollary}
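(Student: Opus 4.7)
The plan is to mimic the positive characteristic case (Corollary~\ref{cor:completionspositivecharacteristic}), where $\oF{p}$ was automatically $\theta$-closed because every nonzero element of $\oF{p}$ is a root of unity. In characteristic $0$ no such automatic closure of $\overline{\Q}$ is available, so the natural replacement for $(\oF{p},\theta\upharpoonright)$ is the smallest algebraically closed $\LL$-substructure containing $\Q$, namely $(\cl_\theta(\Q),\theta\upharpoonright)$. The key conceptual ingredient is the identification of this set with $\acl_\theta(\emptyset)$, which Proposition~\ref{prop:typesACFH} essentially provides.

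For the right-to-left direction, suppose $\sigma$ is an $\LL$-isomorphism between $(\cl_\theta(\Q),\theta_1\upharpoonright)$ and $(\cl_\theta(\Q),\theta_2\upharpoonright)$. Taking $F=(\cl_\theta(\Q),\theta_1\upharpoonright)$, the substructure $F$ is a model of $T$ (being an algebraically closed field stable under $\theta_1$), and it $\LL$-embeds into $(K_1,\theta_1)$ by inclusion and into $(K_2,\theta_2)$ via $\sigma$. Theorem~\ref{thm_complet_diag} then gives $(K_1,\theta_1)\equiv_F (K_2,\theta_2)$, so in particular $(K_1,\theta_1)\equiv(K_2,\theta_2)$.

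For the left-to-right direction, I pass to isomorphic sufficiently saturated elementary extensions $(K_1',\theta_1')\cong (K_2',\theta_2')$, which exist by the Keisler--Shelah theorem applied to $(K_1,\theta_1)\equiv(K_2,\theta_2)$. Any such $\LL$-isomorphism restricts to an $\LL$-isomorphism between the respective $\acl_\theta(\emptyset)$. Now in characteristic $0$, the field-theoretic algebraic closure of $\emptyset$ equals $\overline{\Q}$, so $\cl_\theta(\emptyset)=\cl_\theta(\Q)$, and by Proposition~\ref{prop:typesACFH} this coincides with $\acl_\theta(\emptyset)$. Since $\acl_\theta(\emptyset)$ does not grow on passing to elementary extensions, the isomorphism descends to the desired $(\cl_\theta(\Q),\theta_1\upharpoonright)\cong (\cl_\theta(\Q),\theta_2\upharpoonright)$.

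Neither direction looks to present a real obstacle: the heavy lifting has been done in Theorem~\ref{thm_complet_diag} and Proposition~\ref{prop:typesACFH}. The only mild subtlety is verifying the characterization $\acl_\theta(\emptyset)=\cl_\theta(\Q)$ in characteristic $0$; this is immediate from the definition of $\cl_\theta$ once one observes that the zeroth step $\cl_\theta^0(\emptyset)$ is the field-algebraic closure of $\emptyset$, which in characteristic $0$ is $\overline{\Q}$.
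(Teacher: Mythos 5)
Your proof is correct and follows essentially the same route as the paper's: the right-to-left direction via Theorem~\ref{thm_complet_diag}, and the left-to-right via isomorphic elementary extensions, with $\cl_\theta(\Q)$ playing the role that $\oF{p}$ (via Remark~\ref{remark_Fpbarclosed}) played in the positive characteristic case. The detour through $\acl_\theta(\emptyset)=\cl_\theta(\Q)$ is a harmless extra step: it suffices to note directly that $\cl_\theta(\Q)$ is closed under $\theta$, is unchanged under elementary extension, and is preserved by any $\LL$-isomorphism.
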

\begin{proof}
The proof is as in Corollary \ref{cor:completionspositivecharacteristic} where the use of Remark \ref{remark_Fpbarclosed} is replaced by the fact that $\cl_\theta(\Q)$ is closed under the endomorphism.
\end{proof}

Again, any power function $x\mapsto x^n$ defines a unique completion on $\Q^\alg$. However, completions of ACFH are not necessarily given by a multiplicative endomorphism of $\Q^\alg$. For instance, there is a completion where $\theta(2)$ is a transcendental element over $\Q$. This is clearly a consistent type: if $t$ is transcendental over $\Q$, then $(2,t)$ is a multiplicatively independent tuple so there is an endomorphism of $\Q(t)^\alg$ sending $2$ to $t$. This endomorphism does not restrict to $\Q^\alg$. This is the main difference between the characteristic $0$ and the positive characteristic: the multiplicative degree of $\oF p$ is $0$ and the multiplicative degree of $\Q^\alg$ is $\aleph_0$ (the prime numbers form a multiplicatively independent set).


\section{Kim-independence and NSOP$_1$}
\subsection{Preliminaries}

Let $T$ be a complete theory with monster model $\MM$.

\begin{definition}
 Let $\ind$ be an invariant ternary relation on small subsets of $\MM$. We define the following axioms.
\begin{enumerate}[$(1)$]
\item (\setword{normality}{NOR}) If $A\ind_C B$ then $AC\ind_C B$.
\item (\setword{monotonicity}{MON}) If $A\ind_C BD$ then $A\ind_C B$.
\item (\setword{base monotonicity}{BMON}) If $A\ind_C BD$ then $A\ind_{CD} B$.
\item (\setword{finite character}{FIN}) If $a\ind_C B$ for all finite $a\seq A$, then $A\ind_C B$.
  \item (\setword{existence}{EX}) $A\ind_C C$ for any $A$ and $C$.
  \item (\setword{full existence}{FEX}) Given $A,B,C$ there exists $A'$ such that $A'\equiv_C A$ and $A\ind_C B$.
  \item (\setword{extension}{EXT}) If $A\ind_C B$ then for any $D$ there is $A'\equiv_{BC} A$ with $A'\ind_C BD$.
  \item (\setword{symmetry}{SYM}) If $A\ind_C B$ then $B\ind_C A$.
  \item (\setword{transitivity}{TRA}) Given $C\seq D\seq A$, if $A\ind_{D} B$ and $D\ind_C B$ then $A\ind_C B$.
  \item (\setword{local character}{LOC}) For every $A$ and $B$ there exists $C\subseteq B$ such that $\abs{C}\leq \abs{A}+\abs{T}$ and $A\ind_{C} B$.
   \item (\setword{chain local character}{LOCS}) Let $a$ be a finite tuple and $\kappa >\abs{T}$ a regular cardinal. For every continuous chain $(M_i)_{i<\kappa}$ of models with $\abs{M_i}<\kappa$ for all $i<\kappa$ and $M=\bigcup_{i<\kappa} M_i$, there is $j<\kappa$ such that $a\ind_{M_{j}} M$.
  \item (\setword{the independence theorem}{INDTHM} over models) Let $M$ be a small model, and assume $A\ind_M B$, $C_1\ind_M A$, $C_2\ind_M B$, and $C_1\equiv_M C_2$. Then there is a set $C$ such that $C\ind_M AB$, $C\equiv_{MA}C_1$, and $C\equiv_{MB}C_2$.
  \item (\setword{stationarity}{STAT}) Assume $C_1\ind_B A$, $C_2\ind_B A$, and $C_1\equiv_B C_2$. Then $C_1\equiv_{AB} C_2$.
\end{enumerate}
\end{definition}




A well-known result of Kim and Pillay \cite{KP97} gives a characterisation of simple theories and forking by the existence of an invariant ternary relation satisfying a certain set of axioms. Chernikov and Ramsey \cite{CR16} and Kaplan and Ramsey \cite{KR20} provide a similar characterisation of NSOP$_1$ theories and the so-called Kim-forking. In \cite{DK22} Dobrowolski and Kamsma extended those result to the positive setting and, in doing so, yield a refined version of the result of Chernikov, Ramsey and Kaplan, when translated back into the first-order setting. This is the version of the Kim-Pillay style characterization of NSOP$_1$ theories and Kim-independence that we state now, as in \cite[Fact 4.6]{CdEHJRK22}.

\begin{fact}[Chernikov-Ramsey; Kaplan-Ramsey; Dobrowolski-Kamsma]\label{fact:KPnsop1} 
A complete theory $T$ is $\NSOP 1$ if and only if there is an  invariant ternary relation $\ind$ on small subsets of $\MM$, which satisfies \ref{SYM} over models, \ref{EX} over models, \ref{FIN} over models, \ref{MON} over models, \ref{TRA} over models, \ref{EXT} over models,  \ref{INDTHM} over models,  and \ref{LOCS}.
Moreover, in this case $\ind$ is Kim-independence over models.
\end{fact}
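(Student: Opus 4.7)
The statement has two directions together with a uniqueness clause, and the plan is to handle them using the existing machinery of Kim-independence over models.

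For the forward direction, assuming $T$ is $\NSOP 1$, the natural candidate is to take $\ind$ to be Kim-independence over models, i.e.\ $A \ind_M B$ iff $\tp(A/MB)$ does not Kim-divide over $M$ (equivalently, dividing with respect to a coheir Morley sequence, or a forking Morley sequence). The plan is then to invoke the corresponding lemmas from the literature one axiom at a time: invariance is immediate from automorphism-invariance of Kim-dividing; \ref{FIN} and \ref{MON} are direct from the definition; \ref{EX} over models is the trivial fact $A \ind_M M$ since a constant sequence is a Morley sequence; \ref{SYM} and \ref{EXT} over models are the symmetry and extension theorems for Kim-independence in $\NSOP 1$ theories due to Kaplan--Ramsey; \ref{TRA} over models follows from transitivity of Kim-independence in the $\NSOP 1$ setting; \ref{INDTHM} over models is the independence theorem for Kim-independence in $\NSOP 1$ theories; finally \ref{LOCS} is the chain-local-character result, a more recent addition, which in the $\NSOP 1$ context is proved by a tree construction or via the equivalence with not Kim-dividing along forking Morley sequences.

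For the reverse direction, the plan is the standard ``Kim--Pillay-style'' contrapositive: suppose $T$ has $\SOP 1$ as witnessed by a formula $\varphi(x,y)$ and a tree of parameters, and derive a contradiction from the existence of $\ind$ satisfying the listed axioms. Concretely, I would use $\ref{LOCS}$ to find a model $M$ over which the relevant tuples are $\ind$-independent along the witnessing tree, then apply $\ref{SYM}$, $\ref{MON}$, $\ref{EXT}$ repeatedly to arrange a $2$-inconsistent pattern that can be amalgamated using $\ref{INDTHM}$ over $M$; this contradicts $2$-inconsistency in the $\SOP 1$ tree. This is the key step and the principal obstacle: one has to carefully combine extension and the independence theorem to produce the realization that is forbidden by $\SOP 1$. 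This is essentially the Chernikov--Ramsey argument, streamlined by Kaplan--Ramsey and then by Dobrowolski--Kamsma so that only the axioms ``over models'' are used (in particular no \ref{BMON}), which is precisely why the statement of the fact lists only those.

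For the moreover clause, the plan is to prove the uniqueness of such an $\ind$ on $\NSOP 1$ theories: given two relations $\ind^1$ and $\ind^2$ both satisfying the axioms (with $\ind^1$ being Kim-independence), show $A \ind^2_M B$ implies $A \ind^1_M B$. The standard argument uses $\ref{EXT}$ and $\ref{INDTHM}$ to produce, from a $\ind^2$-independent configuration over $M$, a coheir Morley sequence in $\tp(A/M)$ along which $\tp(A/MB)$ does not divide, yielding $A \ind^1_M B$. The converse inclusion follows symmetrically. I expect the main technical subtlety to be the careful handling of the local-character axiom \ref{LOCS} in the chain form (rather than the classical cardinality form): one must produce a continuous chain of models along which the relevant type stabilizes, which in the $\NSOP 1$ setting is done by iterating extension and using boundedness of the space of global $M$-invariant types of a fixed sort.
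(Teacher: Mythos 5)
This statement is presented in the paper as a \emph{Fact}, cited to Dobrowolski--Kamsma \cite{DK21} (via \cite{CdEHJRK22}), with no proof given; there is therefore no in-paper argument to compare against, and your sketch should be judged against the cited literature.

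At the level of a high-level outline your plan is broadly on target: the ``only if'' direction does indeed take $\ind$ to be Kim-independence and verifies the axioms one at a time from Kaplan--Ramsey's results, the ``if'' direction is by contrapositive against an $\mathrm{SOP}_1$ tree using the independence theorem, and the ``moreover'' clause is a Kim--Pillay-style uniqueness argument. You also correctly identify the key point of the Dobrowolski--Kamsma refinement, namely that no form of \ref{BMON} is needed and that everything is over models. However, two steps as you describe them would not go through as written. First, in the ``moreover'' clause you assert that after showing $\ind^2 \Rightarrow \ind^1$ (any relation satisfying the axioms is stronger than Kim-independence, via \ref{EXT} plus \ref{INDTHM} along a coheir sequence) ``the converse inclusion follows symmetrically.'' It does not: $\ind^1 \Rightarrow \ind^2$ (Kim-independence is stronger than any such relation) uses a different argument, resting on the local-character axiom \ref{LOCS} together with the witnessing-Morley-sequence characterization of Kim-independence, and has no symmetric counterpart to the \ref{INDTHM}-amalgamation argument. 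Second, in the reverse direction you invoke \ref{LOCS} to ``find a model $M$ over which the relevant tuples are $\ind$-independent along the witnessing tree''; in the actual proof \ref{LOCS} is not used for that purpose at that point -- the base model comes from the $\mathrm{SOP}_1$ witness itself and \ref{LOCS} enters elsewhere -- and the amalgamation step that derives a contradiction from $2$-inconsistency is a genuinely delicate diagram chase that your one-sentence gloss does not capture. Since you flagged these as the places where you expected difficulty, these are gaps in the sketch rather than errors of conception, but they are exactly the technical heart of the theorem.
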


\subsection{Any completion of ACFH is NSOP$_1$} We work in a monster model $(\K,\theta)$ of a completion of ACFH. 

\begin{definition}
For small sets $A,B,C$ in $\K$, we define
\[A\indi\theta _C B \iff \cl_\theta(AC)\indi{\alg}_{\cl_\theta(C)} \cl_\theta(BC).\]
\end{definition} 

We need to check that $\indi{\theta}$ satisfies all properties in Fact \ref{fact:KPnsop1}. The properties \ref{SYM}, \ref{EX}, \ref{FIN}, \ref{MON} and \ref{TRA} immediately follow from the fact that they are satisfied by $\indi{\alg}\ \ $. The property \ref{EXT} follows from \ref{TRA} and \ref{FEX}. It remains to check \ref{FEX} and \ref{LOCS}. 
\begin{proof}[Proof of \ref{FEX}]
Let $A,B,C$ be small subsets of $\K$. We may assume that $A,B,C$ are $\cl_\theta$-closed and that $C\seq A\cap B$. Let $D = \cl_\theta(AB)$. By Remark \ref{rk_isomorphiccopy}, there exists $(D',\theta')\models T$ and an $\LL$-isomorphism $\sigma:(D',\theta')\cong (D,\theta\upharpoonright D)$ over $C$ such that $D'\indi \alg _C D$. By saturation of $(\K,\theta)$, we may assume that $D'\seq \K$. By considering $A' = \sigma^{-1}(A)$, we have $A'\equiv_C A$ by Proposition \ref{prop:typesACFH} and $A'\indi \alg _C B$ by \ref{MON} of $\indi \alg\ \ $.
\end{proof} 

Instead of \ref{LOCS}, we prove that $\indi\theta$ satisfies the stronger property \ref{LOC}.

\begin{proof}[Proof of \ref{LOC}]Let $A$ and $B$ be given. We may assume that $B = \oT B$. By \ref{LOC} for ACF, there exists $C_0\seq B$ such that $\abs{C_0}\leq \abs{A}+\abs{T}$ and such that $A\indi{\alg}_{C_0} B$. Again by \ref{LOC} for $\indi{\alg}\ $, there exists $C_1\seq B$ such that $\abs{C_1}\leq \abs{\oT{AC_0}}+\abs{T} = \abs{A}+\abs{T}$ such that $\oT{AC_0} \indi{\alg}_{C_1} B$. By \ref{BMON} for $\indi\alg\ \  $, we may assume that $C_0\seq C_1\seq B$. Inductively, we find a chain $(C_n)_{n<\omega}$ such that for all $n$, $\oT{aC_{n}} \indi{\alg}_{C_{n+1}} B$, with $C_{n}\seq C_{n+1}\seq B$ and $\abs{C_n}\leq \abs{A}+\abs{T}$. Let $C = \bigcup_{n<\omega} C_n$. Then, by construction $\oT{AC_n }\indi\alg_C B$ for all $n$, so by \ref{FIN} for $\indi{\alg}\ \ $, we have $\oT{AC}\indi{\alg}_C B$, hence $A\indi{\theta}_C B$. 
\end{proof}

It is standard that \ref{INDTHM} follows from $3$-amalgamation
(Theorem \ref{thm_n-AM}), see \cite[Proposition A.4]{dEKN21} for instance. We conclude.

\begin{theorem}\label{thm_ACFH_NSOP1}
Any completion of ACFH is NSOP$_1$ and $\indi\theta$ coincides with Kim-independence over models.
\end{theorem}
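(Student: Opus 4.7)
The plan is to verify the Kim-Pillay style criterion for $\NSOP 1$ (Fact \ref{fact:KPnsop1}) applied to the candidate relation $\indi\theta$. Since $\indi\alg$ is known to satisfy \ref{SYM}, \ref{EX}, \ref{FIN}, \ref{MON}, and \ref{TRA} in ACF, and these properties transfer directly through the definition $A\indi\theta_C B\iff \cl_\theta(AC)\indi\alg_{\cl_\theta(C)}\cl_\theta(BC)$ (using that $\cl_\theta$ is monotone and idempotent), these axioms for $\indi\theta$ come essentially for free. The axiom \ref{EXT} is then a formal consequence of \ref{TRA} together with \ref{FEX}, so the real work reduces to establishing \ref{FEX}, \ref{LOCS}, and \ref{INDTHM} over models.

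For \ref{FEX}, the key idea is to use the closure operator $\cl_\theta$ to reduce to an ACF-level question, then repair the $\LL$-type. Given $A,B,C$ (which we may assume to be $\theta$-closed with $C\subseteq A\cap B$), I would form $D=\cl_\theta(AB)$ and invoke Remark \ref{rk_isomorphiccopy} to produce an $\LL$-isomorphic copy $(D',\theta')$ of $(D,\theta\upharpoonright D)$ over $(C,\theta\upharpoonright C)$ with $D'\indi\alg_C D$. By saturation of the monster, embed $D'\hookrightarrow \K$; then $A':=\sigma^{-1}(A)$ satisfies $A'\equiv_C A$ by Proposition \ref{prop:typesACFH} and $A'\indi\theta_C B$ by monotonicity of $\indi\alg$.

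For \ref{LOCS}, I would prove the stronger \ref{LOC} by an $\omega$-iteration: starting from a set $C_0\subseteq B$ of size $\leq |A|+|T|$ with $A\indi\alg_{C_0} B$, enlarge to $C_{n+1}\subseteq B$ so that $\cl_\theta(AC_n)\indi\alg_{C_{n+1}} B$, using base monotonicity of $\indi\alg$ to keep the chain increasing. The union $C=\bigcup_n C_n$ has the right size, and finite character of $\indi\alg$ yields $\cl_\theta(AC)\indi\alg_C B$, i.e.\ $A\indi\theta_C B$. For \ref{INDTHM} over models, I would appeal directly to $3$-amalgamation, already established in Theorem \ref{thm_n-AM}: it is a standard fact (see e.g.\ \cite[Proposition A.4]{dEKN21}) that a ternary relation satisfying the previous axioms together with $3$-amalgamation of independent systems satisfies the independence theorem over models.

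The main obstacle I anticipate is actually \ref{FEX}, because one has to be careful that the copy produced abstractly by Remark \ref{rk_isomorphiccopy} embeds into the monster in a way that preserves the $\LL$-type over $C$ — this is exactly where Proposition \ref{prop:typesACFH} (identifying types with $\theta$-closure isomorphism) is used. Once all axioms are verified, Fact \ref{fact:KPnsop1} directly yields that the completion is $\NSOP 1$ and that $\indi\theta$ coincides with Kim-independence over models.
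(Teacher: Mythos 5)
Your proposal is correct and follows the paper's proof essentially verbatim: verify the Kim--Pillay criterion (Fact \ref{fact:KPnsop1}) for $\indi\theta$, with the basic axioms inherited from $\indi\alg$, \ref{FEX} via Remark \ref{rk_isomorphiccopy}, saturation, and Proposition \ref{prop:typesACFH}, \ref{LOC} via the $\omega$-iteration with $\cl_\theta$, and \ref{INDTHM} from the $3$-amalgamation of Theorem \ref{thm_n-AM}. Nothing to correct.
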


\begin{remark}\label{rk_independencetheoremoveracl}
In ACFH, we actually get a strong version of \ref{INDTHM}, analogously to ACFA or ACFG. First, \ref{INDTHM} holds over every $\cl_\theta$-closed sets, not only over models. For ACFA in \cite[Generalised independence theorem, (1.9)]{CH99}, $E$ is an algebraically closed substructure. For ACFG, this is \cite[Example 5.2]{dE21A}. Further, the parameters $A$ and $B$ need not be independent, it is enough that they intersect in the base set. This is folklore for ACFA and is explicit in \cite[Example 5.2]{dE21A} for ACFG. 
\end{remark}

\subsection{Failure of \ref{BMON} and TP$_2$.}

We exemplify the failure of \ref{BMON} for $\indi \theta$. Let $(\K,\theta)$ be a monster model of ACFH. Let $E = \oT{E}$, $a,b,c$  in $\K$ such that $a,b,c$ are algebraically independent over $E$. Assume that $a,b,c$ are solutions of the equation $\theta(x+y) = z$ which further satisfy:
\begin{enumerate}
    \item $\theta[E(a)^\alg]\seq E$,
    \item $\theta[E(b,c)^\alg]\seq E$,
    \item $\theta(a+b)=c$,
    \item $\theta[E(a,b)^\alg] = \vect{E, c}$.
\end{enumerate}
Then $\oT{E(a)} = E(a)^\alg$, $\oT{E(b,c)} = E(b,c)^\alg$,  $\oT{E(b)} = E(b)^\alg$, hence $a\indi\theta_E bc$. However, $c\in \oT{E(a,b)}$, hence $c\in (\oT{E(a,b)}\cap \oT{E(b,c)})\setminus \oT{E(b)}$ so in particular $a\nindi\theta_{Eb} c$.

We explain how to formally get (1) above, (2), (3) and (4) are similar. Assume that $E$ is a subset of $(\K,\theta)$ such that $E = \oT E$, and let $\theta_E = \theta\upharpoonright E$. Let $x$ be algebraically independent over $\K$, in particular over $E$. As $E^\times$ is divisible, there exists $H$ such that $E\odot H = (Ex)^\alg$. Then define $\theta_H:H\rightarrow \set 1$ and $\theta' := \theta_E\odot\theta_H : (Ex)^\alg\rightarrow E$. In particular $((Ex)^\alg,\theta')$ and $(\K,\theta)$ are independent extensions of $(E,\theta_E)$ hence by $2$-amalgamation of $T$ and model completeness of ACFH, the type associated to the isomorphism type of $((Ex)^\alg,\theta')$ is consistent in $(\K,\theta)$, hence we may find $a\in \K$ such that $((Ex)^\alg,\theta')\cong^\LL_E ((Ea)^\alg,\theta)$.

TP$_2$ in ACFH can be witnessed by the presence of the kernels, which are generic subgroups, by the formula $xy+z\in G$, as in \cite[Theorem 4.2.]{dEKN21}. TP$_2$ in ACFH also follows from the presence of the generic non-linear binary map defined by $(x,y)\mapsto \theta(x+y)$, as in \cite[Proposition 3.14.]{KrR18}.

\subsection{Elimination of imaginaries under the existence axiom}
 In this subsection we prove that ACFH has elimination of imaginaries, under the condition that forking satisfies \ref{EX}.
We use the following standard definitions.
 \begin{enumerate}
 \item[$\text{ }$] $a\indi a _C b$ if and only if $acl(Ca)\cap acl(Cb) = acl(C)$
  \item[$\text{ }$] $a\indi{K}_C b$ if and only if $tp(a/Cb)$ does not Kim-fork over $C$
 \item[$\text{ }$] $a\indi{d}_C b$ if and only if $tp(a/Cb)$ does not divide over $C$ 
 \item[$\text{ }$] $a\indi{f}_C b$ if and only if $tp(a/Cb)$ does not fork over $C$
 \end{enumerate}

 We quickly introduce some notations from what could be called `axiomatic independence theory', which was developed by Adler in his thesis \cite{adlerthesis} (see also \cite{A09}), then further used in various papers in the recent study of NSOP$_1$ theories (e.g. \cite{CK19,dE21B,KR20,KrR18}), but is rooted in the study of forking in simple theories \cite{KP97}. See \cite{dE23axiomaticindep} for a recent treatement of this topic.

\begin{definition}
Let $\ind$ be any ternary relation, we define $\indi{m} $ and $\indi *$ as follows.
\begin{itemize}
    \item (Forcing base monotonicity) $A\indi{m}_C  B$ if $A\ind_{CD} BC$ for all $D\subseteq \acl(BC)$.
    \item (Forcing extension) $A\indi{*}_C B$ if $\forall \hat{B}\supseteq B$ there exists $A'\equiv_{BC}A$ such that $A'\ind_C \hat B.$
\end{itemize}
\end{definition}

\begin{example}
In ACF, the relation ${\indi a}^m$ is $\indi \alg$. The relation $\indi M\ $ in~\cite[Section 4]{A09} is the relation ${\indi a}^{m}$ in our context. 
\end{example}
The following is \cite[Lemma 3.1]{A09} and \cite[Lemma 3.2]{dE21B}.

\begin{fact}\label{fact_mon_ext}
  Let $\ind $ be an invariant ternary relation satisfying \ref{MON}, \ref{TRA}. 
  \begin{itemize}
      \item The relation $\indi{m} $ is invariant and satisfies \ref{MON}, \ref{TRA} and \ref{BMON}.
      \item The relation $\indi{*}$ is invariant and satisfies \ref{MON}, \ref{TRA} and \ref{EXT}. If $\ind$ satisfies \ref{BMON}, so does $\indi *$.
  \end{itemize}
\end{fact}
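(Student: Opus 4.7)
The plan is to check each claimed property directly from the defining conditions of $\indi{m}$ and $\indi{*}$, using only the hypothesized properties of $\ind$. In both cases, invariance is automatic, since $\ind$ is invariant and $\acl$ commutes with automorphisms of $\MM$.

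For $\indi{m}$, I would first dispatch \ref{MON}: if $A\indi{m}_C BE$ then for every $D\subseteq \acl(BC)\subseteq \acl(BCE)$ one has $A\ind_{CD} BCE$, and \ref{MON} for $\ind$ gives $A\ind_{CD} BC$. The \ref{BMON} property is essentially built into the definition: given $A\indi{m}_C BE$ and $D'\subseteq \acl(BCE)=\acl((CE)B)$, set $D=ED'\subseteq \acl(BCE)$; then $A\ind_{CD} BCE$ reads $A\ind_{(CE)D'} BCE$, which is exactly the condition $A\indi{m}_{CE} B$. The delicate point is \ref{TRA}: given $C\subseteq E\subseteq A$, $A\indi{m}_E B$ and $E\indi{m}_C B$, fix $D\subseteq \acl(BC)\subseteq \acl(BE)$; from the hypotheses we get $E\ind_{CD} BC$ and $A\ind_{ED} BE$, hence $A\ind_{ED} BC$ by \ref{MON}. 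Applying \ref{TRA} for $\ind$ with base $CD$ and middle level $ED$ (after absorbing $D$ on the left with monotonicity as in \cite[Lemma 3.1]{A09}) yields $A\ind_{CD} BC$.

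For $\indi{*}$, the core technique is to enlarge the test set. For \ref{MON}, if $A\indi{*}_C BE$ and $\hat B\supseteq B$ is given, apply the hypothesis to the test set $\hat B\cup E\supseteq BE$, obtain $A'\equiv_{BEC} A$ (hence $A'\equiv_{BC} A$) with $A'\ind_C \hat B\cup E$, and conclude by \ref{MON} for $\ind$. For \ref{TRA}, if $A\indi{*}_E B$ and $E\indi{*}_C B$ with $C\subseteq E\subseteq A$, run the same enlargement argument chaining the two hypotheses and invoke \ref{TRA} for $\ind$. The \ref{EXT} axiom is essentially built into the definition: given $A\indi{*}_C B$ and an arbitrary $D$, saturation of the monster lets us find $A'\equiv_{BC} A$ realizing the partial independence type imposed by $A'\ind_C \hat B$ for cofinally many $\hat B\supseteq BD$, and this $A'$ will satisfy $A'\indi{*}_C BD$. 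Finally, if $\ind$ satisfies \ref{BMON} and $A\indi{*}_C BE$, then for any $\hat B\supseteq B$ we apply the hypothesis to $\hat B\cup E$ to obtain $A'\equiv_{BEC} A$ with $A'\ind_C \hat B\cup E$; then \ref{BMON} for $\ind$ gives $A'\ind_{CE} \hat B$, proving $A\indi{*}_{CE} B$.

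The only subtle step is the \ref{TRA} verification for $\indi{m}$, which requires careful choice of the middle level in the chain of transitivity; however, this is a standard axiomatic manipulation and poses no conceptual obstacle. All remaining verifications amount to systematic applications of the "enlarge-and-restrict" pattern combined with the axioms already assumed for $\ind$.
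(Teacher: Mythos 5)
Your treatment of \ref{MON} and \ref{BMON} for $\indi{m}$, of \ref{MON} for $\indi{*}$, and of the preservation of \ref{BMON} by $\indi{*}$ is correct and is the expected direct verification. However, the two ``delicate'' steps you identify both contain genuine gaps.

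For \ref{TRA} of $\indi{m}$: you want to apply \ref{TRA} for $\ind$ with base $CD$ and middle level $ED$, but \ref{TRA} as stated requires the middle level to lie between the base and $A$, i.e.\ $CD\subseteq ED\subseteq A$. Since $D\subseteq\acl(BC)$, there is no reason for $D$ (hence $ED$) to be contained in $A$. Moreover, you only have $E\ind_{CD} BC$, not $ED\ind_{CD} BC$, and ``absorbing $D$ on the left with monotonicity'' is not something \ref{MON} provides --- \ref{MON} lets you \emph{shrink} the right-hand side, not enlarge the left-hand side. Passing from $E\ind_{CD} BC$ to $ED\ind_{CD} BC$ would require a normality-type assumption (adding part of the base to the left), which is not among the hypotheses on $\ind$. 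To make the argument go through one must either work with a normalised relation or use the formulation of transitivity from Adler's paper (which, unlike the statement in this paper, does not impose $D\subseteq A$); as written the step does not close.

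For \ref{EXT} of $\indi{*}$: your argument asks for a single $A'\equiv_{BC}A$ with $A'\ind_C\hat B$ for (co)finally many $\hat B\supseteq BD$. But the definition of $\indi{*}$ allows the independent copy to vary with the test set: $A'\indi{*}_C BD$ asks that for each $\hat B_1\supseteq BD$ there exist some $A''\equiv_{BDC}A'$ with $A''\ind_C\hat B_1$; it does not ask for one $A'$ simultaneously free from all $\hat B_1$. Indeed, for a nondegenerate $\ind$ such a single $A'$ would have to be independent from the whole monster, which in general does not exist, so the object you invoke saturation to produce is typically not there. The correct argument is a compactness argument in the type space $S(BDC)$: for each $\hat B_1\supseteq BD$ consider the set of types $q\supseteq\tp(A/BC)$ over $BDC$ having a realisation free from $\hat B_1$ over $C$; these sets are nonempty (by $A\indi{*}_C B$) and decreasing in $\hat B_1$ (by \ref{MON}), and one then has to argue they are closed and apply compactness of the Stone space to extract a single $q$ in the intersection. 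None of this appears in your sketch, and the claim that it is ``essentially built into the definition'' obscures the fact that idempotence of the $*$-operator (equivalently, \ref{EXT} for $\indi{*}$) is the nontrivial content of the lemma.

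The \ref{TRA} verification for $\indi{*}$ is only sketched (``run the same enlargement argument''), but the underlying idea --- use invariance to transport $A$ along an automorphism taking $E$ to the independent copy $E'$ of $E$ over $BC$, then apply $\indi{*}$ over $E'$ and \ref{TRA} for $\ind$ --- is correct and can be filled in.
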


  Let $\ind$, $\indi 0$ be two ternary relations, such that $\ind \rightarrow \indi 0$, by which we mean $A\ind_C B$ implies $A\indi 0_C B$, for all $A,B,C$. We also say that $\ind$ is \textit{stronger} than $\indi 0$. If $\ind$ satisfies \ref{BMON} then $\ind$ is stronger than ${\indi 0}^{m}$.


The following follows from \cite[Lemma 4.12]{dE21B} (see also \cite[Theorem 4.1.24]{dE23axiomaticindep} for a more general version). 

\begin{fact}\label{fact_fork}
  Let $\ind$ be a ternary invariant relation, which satisfies  \ref{MON} and \ref{INDTHM} over algebraically closed sets. If for all $a,b,C$ $\tp(a/bC)$ if finitely satisfiable in $C$ implies $a\ind_C b$, then ${\indi{m}\ } ^{*} \rightarrow \indi f$. 
\end{fact}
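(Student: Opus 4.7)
The plan is to prove that $\indi{m}^*_C$-independence implies non-forking, by constructing a global extension of $\tp(a/Cb)$ that contains no formula dividing over $C$. The key ingredients are the extension property \ref{EXT} for $\indi{m}^*$ (from Fact \ref{fact_mon_ext}), the independence theorem \ref{INDTHM} for $\ind$ over algebraically closed sets, and the fact that coheir (finite satisfiability in $C$) implies $\ind$-independence, which is the running hypothesis.

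First, I would iteratively apply \ref{EXT} for $\indi{m}^*$, combined with a compactness argument (passing to a larger monster if needed), to produce $a^*$ with $a^* \equiv_{Cb} a$ and $a^* \indi{m}^*_C D$ for every small $D$. Unpacking the definition of $\indi{m}$, this gives $a^* \ind_{\acl(C)} D$ for every such $D$. It then suffices to show that $p = \tp(a^*/\MM)$ contains no formula dividing over $C$, which is equivalent to $\tp(a/Cb)$ not forking over $C$.

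Suppose for contradiction that some $\varphi(x,d) \in p$ divides over $C$. Using the finite-satisfiability hypothesis together with an Erd\H os-Rado extraction, I would reduce to a witnessing $\acl(C)$-indiscernible Morley sequence $(d_i)_{i<\omega}$ in a global coheir of $\tp(d/\acl(C))$ with $d_0 = d$ and $\{\varphi(x, d_i)\}_i$ inconsistent; by the hypothesis, $d_{n+1} \ind_{\acl(C)} d_{\leq n}$ for each $n$. Taking $\acl(C)$-automorphic copies $a^*_i$ of $a^*$ sending $d_0$ to $d_i$ (so that $(a^*_i, d_i) \equiv_{\acl(C)} (a^*, d_0)$ and $a^*_i \ind_{\acl(C)} d_i$ by invariance), I would inductively amalgamate via \ref{INDTHM} over $\acl(C)$: at stage $n+1$, using $a_n \ind_{\acl(C)} d_{\leq n}$, $a^*_{n+1} \ind_{\acl(C)} d_{n+1}$, $d_{\leq n} \ind_{\acl(C)} d_{n+1}$, and $a_n \equiv_{\acl(C)} a^*_{n+1}$, produce $a_{n+1}$ with $(a_{n+1}, d_i) \equiv_{\acl(C)} (a^*, d_0)$ for all $i \leq n+1$. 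A final compactness argument then yields $a^{**}$ satisfying $\varphi(x, d_i)$ for every $i$, contradicting the assumed inconsistency of $\{\varphi(x, d_i)\}_i$.

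The main obstacle will be the Kim-lemma-style reduction in the third paragraph, namely extracting a coheir Morley sequence that still witnesses dividing, for which the finite-satisfiability hypothesis is essential and which is available thanks to the NSOP$_1$-like framework in which \ref{INDTHM} operates. A secondary subtlety is ensuring that type equalities at each stage of the induction lift from $\equiv_{C}$ to $\equiv_{\acl(C)}$, which is precisely why the base-monotonised relation $\indi{m}$ (designed to behave well under algebraic closures in the base) appears in the statement in place of $\ind$ itself.
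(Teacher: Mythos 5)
The paper does not prove this statement; it is cited from \cite[Lemma 4.12]{dE21B}, so there is no "paper's proof" to compare against. Judged on its own terms, however, your proposal has a genuine gap precisely at the step you flag as "the main obstacle": the third paragraph, where you claim to replace an arbitrary $C$-indiscernible dividing sequence by a Morley sequence in a global coheir of $\tp(d/\acl(C))$ that still witnesses dividing.

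There are two distinct problems. First, such a coheir need not exist: $\tp(d/\acl(C))$ has a global extension finitely satisfiable in $C$ (or even in $\acl(C)$) only when every formula in $\tp(d/\acl(C))$ is already realised in $C$ (resp.\ $\acl(C)$), and this can easily fail, e.g.\ when $d$ is interalgebraic with $C$ but not in $\dcl(C)$. Second, and more fundamentally, even when such a coheir $q$ exists, there is no reason why a Morley sequence in $q$ should witness the \emph{same} inconsistency: dividing of $\varphi(x,d)$ over $C$ is witnessed by \emph{some} $C$-indiscernible sequence, and the assertion that it can then be detected on coheir Morley sequences is exactly the content of a "Kim's lemma for forking." That lemma is false in arbitrary theories; its known proofs in the simple setting use local character, and in the $\NSOP{1}$ setting use a tree (SOP$_1$) argument, neither of which follows from the hypotheses MON, INDTHM over acl-closed sets, and ($\mathrm{coheir}\Rightarrow\ind$). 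Saying the reduction is "available thanks to the NSOP$_1$-like framework" assumes what the theorem is in part supposed to deliver.

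A related under-use occurs in your second paragraph: you record only $a^*\ind_{\acl(C)}D$ for every small $D$, whereas the definition of $\indi{m}$ gives the much stronger $a^*\ind_{\acl(CD_0)}D$ for every $D_0\subseteq D$. This extra base-monotone information is not optional: to repair paragraph three one is led to take an ultrafilter limit along a long dividing sequence, obtaining a Morley sequence that is coheir over the whole long sequence (not over $C$ or $\acl(C)$), and the independence theorem then has to be run over the correspondingly larger acl-closed base — which requires exactly the $\indi{m}$ information you dropped. Even with this, one must carefully control how the formula $\varphi$ transfers along the extraction (a Morley sequence in the ultrafilter limit realises the \emph{reversed} EM-type and does not in general start at $d$). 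So the proposal as written cannot be completed without a substantially different argument for the reduction step.
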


\begin{remark}
It is actually in the current folklore that $\indi f = {(\indi K\ )^m}^*$ over models in an \emph{arbitrary} theory. For a proof, ask Kaplan or Ramsey.
\end{remark}

\begin{remark}
In particular, in ACFH, $(\indi {\theta m}\ \ )^{*} = \indi f$. As for now, we do not know if $\indi f$ satisfies \ref{EX}. We will take it as an assumption in Proposition \ref{prop:imextension}.
\end{remark}

  The following classical fact follows from a group theoretic lemma due to P.M. Neumann~\cite{Neu76}. To our knowledge, it appears first in~\cite[Lemma 1.4]{EvaHru93}.
 \begin{fact}\label{neumann}
  Let $\MM$ be a highly saturated model, $X$ a $0$-definable set, $e\in \MM$, $E = \acl(e)\cap X$ and a tuple $a$ from $X$. Then there is a tuple $b$ from $X$ such that 
  $$a\equiv_{Ee} b \mbox{ and } \acl(Ea)\cap \acl(Eb)\cap X = E.$$
\end{fact}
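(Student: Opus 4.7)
The plan is a proof by contradiction, broadly along the lines of Neumann's original strategy. Suppose that for every tuple $b$ from $X$ with $b \equiv_{Ee} a$, the intersection $\acl(Ea) \cap \acl(Eb) \cap X$ strictly contains $E$. Since $E = \acl(e) \cap X \subseteq \acl(e)$, we have $\acl(Ee) = \acl(e)$, and hence $\acl(Ee) \cap X = E$; in particular, any witness $c \in (\acl(Ea) \cap \acl(Eb) \cap X) \setminus E$ lies outside $\acl(Ee)$, so $\tp(c/Ee)$ is non-algebraic.

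First I would use saturation of $\MM$ to extract an $Ee$-indiscernible sequence $(a_i)_{i < \omega}$ of realisations of $p := \tp(a/Ee)$. Since $a_0 \equiv_{Ee} a$, the $\mathrm{Aut}(\MM/Ee)$-invariant form of the standing hypothesis yields, for each $i \ge 1$, a witness $c_i \in (\acl(Ea_0) \cap \acl(Ea_i) \cap X) \setminus E$. A further Ramsey-type extraction, applied to the sequence $((a_i, c_i))_{i \ge 1}$ over the base $Ea_0$, ensures that this enlarged sequence is $Ea_0$-indiscernible while $(a_i)_i$ remains $Ee$-indiscernible with $a_i \equiv_{Ee} a$.

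The crux is a pigeonhole collapse of the $c_i$. Each $c_i$ lies in $\acl(Ea_0)$, and all the $c_i$ share the same type over $Ea_0$ by $Ea_0$-indiscernibility; but an algebraic type over $Ea_0$ has only finitely many realisations in $\MM$, and an indiscernible sequence confined to a finite set must be constant. So there is a single $c \in X \setminus E$ with $c_i = c$ for all $i \ge 1$, giving $c \in \bigcap_{i < \omega} \acl(Ea_i)$ while $(a_i)_i$ is still $Ee$-indiscernible.

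The conclusion then invokes the standard lemma that if $(a_i)_i$ is $B$-indiscernible and $c \in \bigcap_i \acl(Ba_i)$, then $c \in \acl(B)$: otherwise, infinitely many $Ee$-conjugates of $c$ would have to coexist inside the bounded set $\acl(Ea_0)$, contradicting algebraicity. Applied with $B = Ee$, this gives $c \in \acl(Ee) \cap X = \acl(e) \cap X = E$, contradicting $c \notin E$. The main obstacles I expect are the Ramsey synchronisation and the closing lemma on indiscernibles, both requiring careful handling of the interplay between indiscernibility and algebraic closure; once the collapse $c_i = c$ is established, the rest is routine.
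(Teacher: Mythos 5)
Your reduction up to the pigeonhole collapse is reasonable in outline (one small fix: the second extraction should be done over $Ea_0e$ rather than $Ea_0$, so that $e$ stays in the base and $Ee$-indiscernibility of the $a$-components is genuinely preserved). The fatal gap is the final ``closing lemma'': the claim that $(a_i)_i$ being $B$-indiscernible with $c \in \bigcap_i \acl(Ba_i)$ forces $c \in \acl(B)$ is \emph{false}. In $\ACF$ with $B = \emptyset$, take $t, s_0, s_1, \dots$ algebraically independent transcendentals, put $a_i = (t, s_i)$ and $c = t$: then $(a_i)_{i<\omega}$ is $\emptyset$-indiscernible (the type of any increasing $n$-tuple says only that the first coordinates coincide and the whole collection is algebraically independent), $c \in \acl(a_i)$ for every $i$, yet $t \notin \acl(\emptyset)$. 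This example even carries all the extra structure your extraction buys: choosing $(s_i)_{i\geq 1}$ to be a Morley sequence over $(t,s_0)$ makes $((a_i,c))_{i\geq 1}$ indiscernible over $a_0$ with constant second component, which is exactly the situation you reach. Your justification for the lemma is also incorrect: if $c \notin \acl(Ee)$, its $Ee$-conjugates $\sigma(c)$ for $\sigma\in\mathrm{Aut}(\MM/Ee)$ lie in $\sigma(\acl(Ea_0)) = \acl(E\sigma(a_0))$, which moves with $\sigma$, so these conjugates need not coexist inside $\acl(Ea_0)$ and no clash with algebraicity arises.

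By contrast, the proof the paper cites (Evans--Hrushovski \cite{EvaHru93}, Lemma 1.4) does not use indiscernibles at all. One lets $G = \mathrm{Aut}(\MM/Ee)$ act on $X$, notes that every element of $\acl(Ea)\cap X\setminus E$ lies in $X$ but outside $\acl(Ee)$ and hence has infinite $G$-orbit, applies P.\,M.\,Neumann's permutation-group lemma to move any \emph{finite} subset of $\acl(Ea)\cap X\setminus E$ off itself by some $g\in G$, and then uses compactness/saturation in the variable $b$ to promote these finite separations into the full conclusion $\acl(Ea)\cap\acl(Eb)\cap X = E$ with $b = ga$. Your pigeonhole collapse isolates a single obstructing $c$, but without a substitute for Neumann's counting it gives no way to rule that point out.
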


\begin{proposition}\label{prop:imextension}
Assume that $\indi f$ satisfies \ref{EX}. Let $a,b\in \K$, $e\in \dcl^\eq(a)$ and $E = \acl^\eq(e)\cap \K$. Then there exists $a'\equiv_{Ee} a$ such that $a'\indi{\theta}_E b$.
\end{proposition}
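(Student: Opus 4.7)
The strategy is to use the hypothesis that $\indi f$ satisfies \ref{EX} to produce a conjugate of $a$ that is forking-independent from $b$ over $E^*$, and then transfer this to the desired $\indi \theta$-independence over the real part $E$.

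First, I would observe that \ref{EX} for $\indi f$, combined with the standard facts that $\indi f$ satisfies \ref{MON}, \ref{SYM}, \ref{EXT}, and \ref{TRA}, yields the full existence property \ref{FEX}: for any $a, b, C$ there exists $a_0 \equiv_C a$ with $a_0 \indi f_C b$. Apply \ref{FEX} in $\K^\eq$ with $C = E^*$ to obtain $a_0 \equiv_{E^*} a$ such that $a_0 \indi f_{E^*} b$.

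Next, I would translate this to $\indi \theta$-independence over the real base $E$. By Theorem~\ref{thm_ACFH_NSOP1}, Kim-independence coincides with $\indi \theta$ in ACFH over models, and by the remark following Theorem~\ref{thm_ACFH_NSOP1}, this extends to algebraically closed sets. The standard chain $\indi f \Rightarrow \indi K = \indi \theta$ then gives $a_0 \indi \theta_{E^*} b$ at the imaginary level. Since $\cl_\theta$ is defined only on real sets and $E^* \cap \K = E$, we have $\cl_\theta(E^*) = \cl_\theta(E)$, so $\indi \theta_{E^*}$ reduces to $\indi \theta_E$ for real tuples. Setting $a' = a_0$ would finish the proof.

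The main obstacle is the passage in the second step: verifying that the implication $\indi f \Rightarrow \indi \theta$ is valid over an algebraically closed imaginary set $E^*$ and that the base can then be reduced to the real part $E$. A cleaner route to handle this is to first apply Neumann's lemma (Fact~\ref{neumann}) with $X = \K$ the home sort: since $E^* = \acl^\eq(E^*)$ gives $\acl^\eq(E^*) \cap \K = E$, we obtain $a_1 \equiv_{E^*} a$ with $\acl(Ea_1) \cap \acl(Ea) \cap \K = E$, ensuring a real-algebraic separation at the level of the home sort. One can then amalgamate $a_1$ with the forking-independent conjugate from step 1 using the independence theorem in the strong form (over algebraically closed sets, as per Theorem~\ref{thm_n-AM} and the remark after Theorem~\ref{thm_ACFH_NSOP1}) to produce a single $a'$ with $a' \equiv_{E^*} a$ and $a' \indi \theta_E b$. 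This combination of the Neumann-style trick (to handle imaginaries) with forking's full existence (to handle independence from $b$) is the technical crux.
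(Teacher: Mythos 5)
Your Plan A has a genuine gap at the transfer step. You obtain $a_0 \indi f_{E^*} b$ and then want to descend to $a_0 \indi \theta_E b$. This is a \emph{base restriction} from the imaginary base $E^*$ down to the real part $E$, and no independence relation in play grants this: $\indi f$ does not in general restrict bases downwards, and $\indi \theta$ is not even defined over the imaginary set $E^*$ (its definition uses $\cl_\theta$, which only makes sense for real sets --- the expression ``$\cl_\theta(E^*) = \cl_\theta(E)$'' is not meaningful). Indeed the whole point of the proposition is precisely to bridge the gap between a hypothesis over $E^*$ and a conclusion over $E$, so reducing to ``$\indi f$ over $E^*$ plus a routine transfer'' begs the question.

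Your Plan B correctly identifies that Neumann's lemma (Fact~\ref{neumann}) is the right tool to handle the passage from $E^*$ to $E$, and that this must be combined with \ref{EX} for $\indi f$. However, the crucial machinery is missing. The paper does not ``amalgamate using the independence theorem'' as you suggest. Instead, starting from the Neumann conjugate $b$ with $\acl_\theta(Ea)\cap \acl_\theta(Eb) = E$, it iterates \ref{EX} (really \ref{FEX}) for $\indi f$ to build a chain $(a_i)_{i<\omega}$ with $a_{n+1}\indi f_{a_n} a_0\cdots a_n$ and $a_n a_{n+1}\equiv_E ab$, extracts an $E$-indiscernible sequence with the same Ehrenfeucht--Mostowski type, and then exploits total indiscernibility of the sequence in the ACF \emph{reduct} together with elimination of imaginaries in ACF: the canonical base of $\tp^{\ACF}(a_0/a_1a_2)$ lies in both $a_1$ and $a_2$, hence in $a_1\cap a_2 = E$, giving $a_2\indi\alg_E a_0$ and so $a_2\indi\theta_E a_0$. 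This indiscernible-sequence/canonical-base argument is the technical crux, and it does not appear in your proposal. The independence theorem is irrelevant here; the reduction to $b=a$ (via \ref{MON} applied to the tuple $ab$) is what makes the chain argument close.

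In short: Plan A fails outright, and Plan B names the two correct ingredients (Neumann plus existence for $\indi f$) but omits the actual proof, which is an iterated-existence chain followed by a canonical-base computation in the stable reduct.
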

\begin{proof}
By \ref{MON} of $\indi \theta$, it is enough to prove that if $a\in \K$ and $e\in \dcl^\eq(a)$, $E = \acl^\eq(e)\cap \K$, there exists $a'\equiv_{Ee} a$ such that $a'\indi \theta_E a$. 
Let $a,E,e$ be as in the hypotheses. By Fact \ref{neumann}, there exists $b\equiv_{Ee} a$ such that 
\[\acl^\eq(Ea)\cap \acl^\eq (Eb)\cap \K  = \acl_\theta(Ea)\cap \acl_\theta(Eb)= E.\] 
We assume that $a$ and $b$ are enumerations of $\acl_\theta(Ea)$ and $\acl_\theta(Eb)$ such that $a\equiv_{Ee} b$, so $a\cap b = E$. 
We construct a sequence $(a_i)_{i<\omega}$ such that $$a_{n+1} \indi f _{a_n} a_n\ldots a_{0} \mbox{ and }a_n a_{n+1}\equiv_{E} ab.$$

 Start by $a_0 = a$ and $a_1 = b$. Assume that $a_0,\dots,a_n$ have already been constructed. We have that $a_{n-1} \equiv_{E} a_n$ so let $\sigma$ be an $E$-automorphism of $\K$ such that $\sigma(a_{n-1}) = a_n.$
By \ref{EX} for $\indi f$ there exists $a_{n+1} \equiv_{a_n } \sigma(a_n)$ such that $a_{n+1}\indi f _{a_n} a_n\dots a_{0}$. It follows that 
\[a_n a_{n+1} \equiv_{E} a_n\sigma(a_n)\equiv_{E} a_{n-1}a_n.\]
Let $(a_i)_{i<\omega}$ be such a sequence. Note that as $e\in \dcl^\eq(a)$, $a\equiv_{Ee} b$ and $a_na_{n+1}\equiv_E ab$, we have $a_n\equiv_{Ee} a$ for all $n<\omega$. 

Let $0<i<n$, then by \ref{NOR}, \ref{MON}, \ref{BMON} and \ref{TRA} we obtain
\[a_n\ldots a_{i+1}\indi f_{a_i} a_i\ldots a_0.\]
Using the previous expression for $i+1$, we may apply \ref{NOR} and \ref{MON}, to get
\[a_n\ldots a_{i+1}\indi f_{a_{i+1}} a_i\ldots a_0.\]
Hence $\acl_\theta(a_n\ldots a_{i+1})\cap \acl_\theta(a_i\ldots a_0)\seq a_i\cap a_{i+1} = E$ as $a_ia_{i+1}\equiv_E ab$. Then, for all $j_1>\ldots>j_s>i_1>\ldots>i_k$ we have 
\[
\acl_\theta(a_{i_1}\ldots a_{i_k})\cap \acl_\theta(a_{j_1}\ldots a_{j_s}) = a_{i_1}\cap a_{i_1+1} =  E.\quad \quad (\star)
\]
 We construct an indiscernible sequence associated to $(a_i)_{i<\omega}$. Let $\U$ be a non-principal ultrafilter on $\omega$ and let $p$ be the average type of $(a_i)_{i<\omega}$ over $\K$, i.e. $\phi(x)\in p$ if and only if $\set{i<\omega\mid \K\models \phi(a_i)}\in \U$ for all $\phi(x)$ with parameters in $\K$. Let $A = \set{a_i\mid i<\omega}$ and define the sequence $(c_i)_{i<\omega}$ by $c_0 \models p\upharpoonright A$ and $c_{i+1}\models p\upharpoonright Ac_0\ldots c_{i}$. 
The essential property of the sequence $(c_i)_{i<\omega}$ is that for all formulas $\phi(x_1,\ldots,x_s)$ over $A$, for all $j_1>j_2>\ldots >j_s$ we have $\phi(c_{j_1},\ldots,c_{j_s})$ if and only if there is $I_1\in \U$ such that for all $a_{k_1}\in I_1$, there exists $I_2 = I_2(a_{k_1})\in \U$ such that for all $a_{k_2}\in I_2$ there is $I_3$ (depending on $a_{k_1},a_{k_2}$), etc and there exists $I_s$ (depending on the previous choices) such that for all $a_{k_s}\in I_s$ we have $\phi(a_{k_1},\ldots,a_{k_s})$. In particular, $(c_i)_{i<\omega}$ is an indiscernible sequence over $A$ and satisfies the EM-type of $(a_i)_{i<\omega}$ over $E$, and we have $c_n\equiv_{Ee} a$ for all $n<\omega$. Further, for any $j_1>\ldots>j_s>i_1>\ldots >i_t$, the type $\tp(c_{j_1}\ldots c_{j_s}/Ac_{i_1}\ldots c_{i_t})$ is finitely satisfiable in $A$ by tuples $a_{k_1},\ldots,a_{k_s}$ of arbitrarily large indices $k_1>k_2>\ldots >k_s$. 
\begin{claim}
    $\acl_\theta(c_{j_1}\ldots c_{j_s})\cap \acl_\theta(c_{i_1}\ldots c_{i_t}) = E$, for all $j_1>\ldots>j_s>i_1>\ldots >i_t$.
\end{claim}
\begin{proof}[Proof of the claim]
    First, we prove that $\acl_\theta(c_{j_1}\ldots c_{j_s})\cap \acl_{\theta}(A)= E$. By contradiction, assume that some element $d$ of $\acl_\theta(c_{j_1}\ldots c_{j_s})\cap \acl_{\theta}(A)$ is not in $E$. Let $\phi(x,\vec c)$ be an algebraic formula over $E$ witnessing $d\in \acl_\theta(c_{j_1}\ldots c_{j_s})$ with $\vec c = c_{j_1}\ldots c_{j_s}$ and let $\psi(x,a_{i_1},\ldots,a_{i_t})$ be an algebraic formula over $E$ witnessing $d\in \acl_\theta(A)$ which has no realisations in $E$. As $\exists x( \phi(x,\vec c)\wedge \psi(x,a_{i_1},\ldots,a_{i_t}))\in \tp(\vec c/A)$, there are $k_1>\ldots>k_s>i_1>\ldots >i_t$ such that $\exists x \phi(x,a_{k_1},\ldots,a_{k_s})\wedge \psi(x,a_{i_1},\ldots,a_{i_t})$ is consistent, which contradicts $(\star)$. A very similar argument using $\acl_\theta((c_i)_{i<\omega})\cap \acl_{\theta}(A)= E$ and the fact that $\tp(c_{j_1}\ldots c_{j_s}/Ac_{i_1}\ldots c_{i_k})$ is finitely satisfiable in $A$ yields $\acl_\theta(c_{j_1}\ldots c_{j_s})\cap \acl_\theta(c_{i_1}\ldots c_{i_t})\seq \acl(A)$ and hence the claim.
\end{proof}

The sequence $(c_i)_{i<\omega}$ is indiscernible over $E$ and satisfies the claim. By stability and elimination of imaginaries of the reduct ACF (see \cite[Remark 4.8]{Hru12}) we have $c_{n+1}\indi \alg _{c_{n}} c_{n-1}\ldots c_0$ for all $n<\omega$.

 We prove that $c_2\indi{\theta } _E c_0$. 
Note that $(c_i)_{1\leq i<\omega}$ is indiscernible over $c_0$ in ACFH. In particular, this sequence is indiscernible in the sense of the stable reduct ACF, hence $(c_i)_{1\leq i<\omega}$ is totally indiscernible over $c_0$ in the sense of ACF hence \[c_2c_1 \equiv^{\ACF}_{c_0} c_1c_2.\quad \quad (\star\star)\]
As $c_2 \indi{\alg}_{c_1} c_0$, we get $c_1 \indi{\alg}_{c_2} c_0$ by $(\star\star)$.
By elimination of imaginaries in ACF, the canonical base of $\tp^\ACF(c_0/c_1c_2)$ is contained in $c_1$ and $c_2$, hence in $E = c_1\cap c_2$ so that in particular $c_2\indi{\alg}_{E}\  c_0$. As $c_0$ and $c_2$ are $\acl_\theta$-closed, we get $c_2\indi \theta _{E} c_0$. Let $\tau$ be an automorphism over $Ee$ sending $c_0$ to $a$, we conclude by taking $a' = \tau(c_2)$. \end{proof}

\begin{theorem}\label{thm_wei}
If $\indi f$ satisfies \ref{EX}, then ACFH has elimination of imaginaries.
\end{theorem}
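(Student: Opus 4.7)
The plan is to apply the Conant-Kruckman criterion (Fact \ref{fact_conantkruckman}) to the relation $\indi{\theta}$, deduce weak elimination of imaginaries, and then upgrade to full elimination using the fact that any theory of fields eliminates finite imaginaries. Since all the hard work has been packaged into Proposition \ref{prop:imextension} and the strengthened independence theorem noted after Theorem \ref{thm_ACFH_NSOP1}, the proof essentially amounts to checking these hypotheses.

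First, I would verify hypothesis (1) of Fact \ref{fact_conantkruckman} for $\indi{\theta}$. Given $a,b \in \K$ and $E^* = \acl^\eq(E^*) \subseteq \K^\eq$ with $E = E^* \cap \K$, I need $a' \equiv_{E^*} a$ with $a' \indi{\theta}_E b$. This is exactly Proposition \ref{prop:imextension}, whose proof crucially invokes \ref{EX} for $\indi{f}$ (to build the sequence $(a_i)_{i<\omega}$ with $a_{n+1} \indi{f}_{a_n} a_0, \ldots, a_n$ after applying Neumann's lemma). This is the only place in the argument where the hypothesis on $\indi{f}$ is used, so it is the main (and only) obstacle to the theorem.

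Next, I would check hypothesis (2): given $E = \acl(E) \subseteq \K$ and $a \equiv_E b$ with $b \indi{\theta}_E a$ and $c \indi{\theta}_E a$, find $c'$ with $c'a \equiv_E c'b \equiv_E ca$. This is a direct consequence of the independence theorem over $\cl_\theta$-closed sets: since $E$ is $\cl_\theta$-closed (being algebraically closed in the field sense and containing parameters of its own $\theta$-iterates by convention, or more precisely because the stronger version of \ref{INDTHM} stated in the remark after Theorem \ref{thm_ACFH_NSOP1} allows $\cl_\theta$-closed bases with parameters sharing a common intersection rather than being independent), one applies \ref{INDTHM} to $p = \tp(c/Ea)$ and $q = \tp(c/Eb)$ which agree over $E$, using $b \indi{\theta}_E a$ as the required independence of the parameters. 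The resulting realisation $c'$ satisfies $c' \equiv_{Ea} c$ and $c' \equiv_{Eb} c$, which together with $a \equiv_E b$ yields $c'a \equiv_E ca$ and $c'b \equiv_E ca$, as required.

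Having verified both hypotheses, Fact \ref{fact_conantkruckman} immediately gives weak elimination of imaginaries for ACFH. Finally, since ACFH expands the theory of fields, and any theory extending the theory of fields eliminates finite imaginaries, weak elimination combined with elimination of finite imaginaries yields full elimination of imaginaries, completing the proof.
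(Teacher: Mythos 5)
Your proposal matches the paper's proof exactly: apply Fact \ref{fact_conantkruckman} to $\indi\theta$, with hypothesis (1) supplied by Proposition \ref{prop:imextension} (the only place where \ref{EX} for $\indi f$ is used) and hypothesis (2) by \ref{INDTHM} over $\cl_\theta$-closed sets, then upgrade weak elimination of imaginaries to full elimination since theories of fields eliminate finite imaginaries.

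One small slip in your verification of (2), which the paper leaves implicit: you produce $c'$ with $c'\equiv_{Ea} c$ and $c'\equiv_{Eb} c$, which gives $c'b\equiv_E cb$, but you still need $cb\equiv_E ca$, and that is not automatic from $a\equiv_E b$ alone. The standard repair is to fix an $E$-automorphism $\sigma$ with $\sigma(a)=b$ and apply \ref{INDTHM} with $C_1=c$ (over $Ea$) and $C_2=\sigma(c)$ (over $Eb$); these are $E$-conjugate, and one then gets $c'\equiv_{Ea} c$ and $c'\equiv_{Eb}\sigma(c)$, whence $c'b\equiv_E\sigma(c)\sigma(a)=\sigma(ca)\equiv_E ca$. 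Also, the reason $E=\acl(E)$ implies $E$ is $\cl_\theta$-closed is Proposition \ref{prop:typesACFH} (which proves $\acl_\theta=\cl_\theta$), not a matter of convention.
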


\begin{proof}
We first prove that ACFH has weak elimination of imaginaries. Let $e\in \K^\eq$, there is a tuple $a$ from $\K$ and a $0$-definable function $f$ such that $f(a) = e$. Let $E = \acl^\eq(e)\cap \K$, we prove that $e\in \dcl^\eq(E)$. By contradiction, assume not, hence there exists $e'\neq e$ with $e\equiv_E e'$. By Proposition \ref{prop:imextension}, there exists $a'\equiv_{Ee} a$ such that $a'\indi \theta _E a$. Then there exists $b_1,b_2$ such that 
\[eaa'\equiv_E e'b_1b_2,\]
so that $f(b_1) = f(b_2) = e'$ and $b_1\indi\theta_E b_2$. By extension, there exists $b\equiv_{Eb_1} b_2$ such that $b\indi\theta _E a'$. By the independence theorem over algebraically closed sets (Theorem \ref{thm_n-AM}, see also Remark \ref{rk_independencetheoremoveracl}), there exists $c$ such that $c\equiv_{Ea} a'$ and $c \equiv_{Ea'} b$. From $c\equiv_{Ea} a'$ we have $f(c) = f(a) = e$ and from $c \equiv_{Ea'} b$ we have $f(c) \neq f(a') = e$, a contradiction. We conclude that ACFH has weak elimination of imaginaries. As any theory of fields eliminates finite imaginaries, we conclude.
\end{proof}

\begin{remark}
    The previous proof of weak elimination of imaginaries is rooted in classical arguments for elimination of imaginaries that appear for instance in~\cite{CP98}, \cite{CH99} or \cite{Hru12}. Various criteria for weak elimination of imaginaries have been developed by mimicking these arguments, see \cite[Proposition 4.25]{CK19}, \cite[Proposition 1.17]{MRK21}, \cite[Lemma 2.12]{dE21B}.
\end{remark}

It is a current open question whether forking and dividing coincide for types in NSOP$_1$ theories. Recall that in ACFH, we have $\indi f = (\indi{\theta m}\ \ )^*$ hence as $\indi f = \indi d ^*$ it is natural to ask the following question:
\begin{question}
    In ACFH, is $\indi f$ equal to $\indi{\theta m}\ \ $?
\end{question}

\section{Models of ACFH}\label{section_modelsofACFH}

The characterisation of existentially closed models of $T$ given in Theorem \ref{thm_genhom_mult} and the reduction to affine curves in Subsection \ref{subsec:reductiontoaffinecurves} are convenient for proving that a given structure is an existentially closed model, but not so much for describing the structure of a given existentially closed model of $T$. For this section it will be more convenient to deal with definable sets rather than affine varieties, so we give the following characterisation of models of $\ACFH$.

\begin{theorem}\label{thm_genhom_mult_version_defsets}
  $(K,\theta)\models \ACFH$
  if and only if for all complete systems of minimal equations $\tau(x;y)$ over $K$ with $\abs{x} = r$ and $\abs{y} = t$ and for all non-empty multiplicatively $r$-free constructible sets $X\seq \tau(K)\times \tau^\theta(K)\seq K^{r}\times K^t\times K^r\times K^t$ there exists tuples $a\in K^r,b\in K^t$ such that $(ab,\theta(ab))\in X$. 
\end{theorem}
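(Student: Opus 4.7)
The plan is to deduce this characterisation directly from Theorem \ref{thm_genhom_mult_version_axioms} (the variety version) together with the existentially closed property of models of $\ACFH$ granted by Corollary \ref{cor_modelcomplete}. The guiding idea is that passing from varieties to arbitrary constructibles is precisely the kind of strengthening that is free once the structure is existentially closed.

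The backward implication is immediate: every multiplicatively $r$-free affine variety $V \subseteq \tau(K)\times \tau^\theta(K)$ is itself a non-empty constructible set, so applying the stated constructible condition to $V$ produces tuples $(a,b)$ with $(ab,\theta(ab)) \in V$, which is exactly the hypothesis of Theorem \ref{thm_genhom_mult_version_axioms}, and hence $(K,\theta) \models \ACFH$.

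For the forward implication, I would assume $(K,\theta)\models \ACFH$ and let $X \subseteq \tau(K)\times \tau^\theta(K)$ be a non-empty multiplicatively $r$-free constructible set defined by a quantifier-free $\LLr$-formula $\varphi(x,y,x',y')$ with parameters in $K$. By the definition of multiplicatively $r$-free for definable sets, some irreducible component $V$ of the Zariski closure $\overline{X}$ is multiplicatively $r$-free, and since $\tau(K)\times \tau^\theta(K)$ is Zariski-closed we have $V \subseteq \tau \times \tau^\theta$. A standard geometric fact about constructible sets ensures that $V\cap X$, being constructible and Zariski-dense in the irreducible variety $V$, contains a non-empty Zariski-open subset $U$ of $V$. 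I would then invoke Claim \ref{claim_extension_sat_tau} (from the proof of Theorem \ref{thm_genhom_mult}) to produce an extension $(L,\theta')$ of $(K,\theta)$ together with a point $(a,\theta'(a)) \in V$ which is an ACF-generic of $V$ over $K$; since generics of $V$ avoid all proper closed subsets, such a point lies in $U \subseteq X$. Consequently the quantifier-free $\LL$-formula $\tau(x,y)\wedge \varphi(x,y,\theta(x),\theta(y))$ with parameters in $K$ is realised in the extension $(L,\theta')$, and by model-completeness of $\ACFH$ (Corollary \ref{cor_modelcomplete}) it is already realised in $(K,\theta)$ itself, producing the desired tuples.

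The only real obstacle is the geometric observation that a constructible set dense in an irreducible variety contains a Zariski-open subset of it; once this is in hand, the rest reduces mechanically to Claim \ref{claim_extension_sat_tau} and the e.c. property, with no further genuine input needed beyond what has already been established.
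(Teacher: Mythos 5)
Your proof is correct and reaches the same conclusion, but the route is genuinely different from the paper's. You go through the Zariski closure of $X$: you pick a multiplicatively $r$-free irreducible component $V$, observe that $V\cap X$ is constructible and dense in $V$ hence contains a dense open $U\subseteq V$, and then obtain a generic of $V$ living in $U\subseteq X$ by appealing to (the construction inside) Claim~\ref{claim_extension_sat_tau}. The paper instead stays at the level of definable sets and avoids the irreducible-component detour entirely: it applies Lemma~\ref{lm_char_r-free}~(2) to produce a point of $X(\K)$ whose first $r$ coordinates are multiplicatively independent over $K$, then extends $\theta$ across that point directly via Lemma~\ref{lm_ext_mult} and invokes existential closedness. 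Both arguments are sound; yours imports slightly more algebraic geometry (the fact that a dense constructible subset of an irreducible variety contains a dense open), which the paper sidesteps by using the compactness-flavoured formulation of $r$-freeness. Two small caveats in your write-up: $\tau(K)\times\tau^\theta(K)$ is a coset of a subtorus and hence is Zariski-closed in $(K^\times)^{2n}$ but not in general in $K^{2n}$, so $V$ need not literally lie in $\tau\times\tau^\theta$; what you actually need --- and do get --- is that the generic of $V$ over $K$ lies in $(K^\times)^{2n}$ (because $X$ does and is dense in $V$) and hence in $\tau\times\tau^\theta$. Also, Claim~\ref{claim_extension_sat_tau} as stated only asserts the existence of some point $(a,\theta'(a))\in V$; the fact that one may take it to be a generic of $V$ over $K$ is only visible in the proof of the claim, so you should flag that you are using that refinement.
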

\begin{proof}
This characterisation implies the characterisation in Theorem \ref{thm_genhom_mult_version_axioms}, hence it is enough to show that every model of ACFH satisfies the right hand side. We reprove Claim \ref{claim_extension_sat_tau} in Theorem \ref{thm_genhom_mult} replacing $V$ by $X$. Consider a complete system of minimal equations $\tau(x,y)$ over $K$ and a multiplicatively $r$-free definable set $X\subset \tau\times \tau^\theta$. Then by Lemma \ref{lm_char_r-free}, there is an extension $L\succ K$ and $(ab,a'b')\in X(L)$ such that the tuple $a$ is multiplicatively independent over $K$. As $L\models \tau(a,b)\wedge \tau^\theta(a',b')$, there is an endomorphism $\theta'$ of $L^\times$ such that $\theta(ab) = a'b'$ (Lemma \ref{lm_ext_mult}). The result follows by existential closedness.
\end{proof}

Note that it follows from Fact~\ref{fact_multi_free_r_firstorder} that the above characterisation of existentially closed models of $T$ is also first-order.

\begin{example}
Let $(K,\theta)$ be a model of ACFH. Then, for any definable set $X\subseteq K^{r+r}$ which is multiplicatively $r$-free, there exists $a\in K^r$ such that $(a,\theta(a))\in X$. Indeed, consider a trivial complete system of minimal equations $\tau\seq K^r\times K^t$ over $C$, for some natural numbers $r,t$. It consists of equations of the form $y_i = cx_1^{l_1}\ldots x_r^{l_r}$ (because of the conditions on the $\gcd$). From $X\subseteq K^{r+r}$, we can define $Y\seq K^r\times K^t\times K^r\times K^t$ such that $Y \seq \tau\times \tau^\theta$ (simply add to formula defining $X$ the new variables $y_i$ and the equations $y_i = c x_1^{l_1}\ldots x_r^{l_r}$) and for any $(a,b,c,d)\in Y$ we have $(a,c)\in X$, and $b,d$ are entirely determined by $a$ and $c$). In particular $Y$ is also multiplicatively $r$-free. It follows from Theorem \ref{thm_genhom_mult_version_defsets} that there exists $a\in K^r$ such that $(a,\theta(a))\in X$. 
\end{example}

\subsection{A ring of definable endomorphisms of a model of ACFH}
Let $(K,\theta)\models \ACFH$. For $n\in \N$, we denote $\theta^{(n)}= \theta\circ\dots\circ\theta$ the $n$-th iterate of $\theta$, and $\theta^{(0)} = \Id$.

We denote $\Endd(K^\times)$ the ring of definable endomorphisms of $K^\times$ in the language $\LL$, it is a ring where the `addition' is given by $(\phi_1\cdot \phi_2)(x) = \phi_1(x)\phi_2(x)$ and the `multiplication' is given by the composition of maps.

Let $P(X) = \sum_{i=0}^{n} k_iX^i\in \Z[X]$, we define $P(\theta)$ to be the endomorphism of $K^\times$ given by $x\mapsto x^{k_0}\theta(x^{k_1})\ldots \theta^{(n)}(x^{k_n})$. The corresponding map:
\begin{align*}
    \Phi:\Z[X] &\rightarrow \Endd(K^\times)\\
    P(X)&\mapsto P(\theta)
\end{align*} 
is a ring homomorphism. We denote by $\Z[\theta]$ the image of $\Phi$, i.e. the subring of $\Endd(K^\times)$ given by $\set{P(\theta)\mid P(X)\in \Z[X]}$.

\begin{observation}\label{obs_keyformulaiteration}
Let $n\in \N^{>1}$. The $\LLf$-formula \[\Sigma(x_1,\ldots,x_n,y_1,\ldots,y_n) = \bigwedge_{i=2}^{n} x_i = y_{i-1}\] is multiplicatively $n$-free and enjoys the following property: $(a,\theta(a))\in \Sigma$ if and only if $a_2 = \theta(a_1), a_3 = \theta^{(2)}(a_1),\ldots,a_n = \theta^{(n-1)}(a_1)$.
\end{observation}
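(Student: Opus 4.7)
The observation consists of two independent pieces which I plan to verify directly; I do not foresee any real obstacle.

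For the functional statement, I would simply unpack the definition. Given a tuple $a = (a_1,\ldots,a_n)$, the assertion $(a,\theta(a)) \in \Sigma$ means substituting $x_i = a_i$ and $y_i = \theta(a_i)$ in the defining conjunction, which yields the system $a_i = \theta(a_{i-1})$ for $i = 2,\ldots,n$. An easy induction on $i$ then gives $a_i = \theta^{(i-1)}(a_1)$, and the converse implication is immediate from the same substitution.

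For multiplicative $n$-freeness, by Lemma \ref{lm_char_r-free}(2) it suffices to exhibit a single point of $\Sigma(\K)$ whose first $n$ coordinates are multiplicatively independent over $K$. The variety $\Sigma$ is cut out by linear equations that determine $y_1,\ldots,y_{n-1}$ in terms of $x_2,\ldots,x_n$, leaving $x_1,\ldots,x_n$ and $y_n$ as free parameters. Hence I would pick $a_1,\ldots,a_n,c \in \K$ algebraically independent over $K$ (which exist by saturation) and set $b = (a_2,a_3,\ldots,a_n,c)$, so that $(a,b) \in \Sigma(\K)$. Algebraic independence of $a_1,\ldots,a_n$ over $K$ implies multiplicative independence over $K$: any nontrivial relation $a_1^{l_1}\cdots a_n^{l_n} = c \in K^\times$ (with $l_i \in \Z$) can be cleared of denominators to produce a nontrivial polynomial relation over $K$, a contradiction.

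Combining these two verifications closes the observation. The first part is a pure unfolding of notation and the second is a standard application of the characterisation of freeness together with the fact that algebraic independence is stronger than multiplicative independence, so no step presents a genuine difficulty.
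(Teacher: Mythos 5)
Your proof is correct, and since the paper states this as an \emph{Observation} without supplying any argument, your write-up simply fills in the routine verification that the author leaves implicit. The functional claim is indeed a direct unfolding plus trivial induction, and the freeness claim is correctly handled via Lemma \ref{lm_char_r-free}(2): the point $(a_1,\ldots,a_n,a_2,\ldots,a_n,c)$ with $a_1,\ldots,a_n,c$ algebraically independent over $K$ lies in $\Sigma(\K)$, and the step ``algebraic independence over $K$ implies multiplicative independence over $K$'' is sound (a nontrivial relation $a_1^{l_1}\cdots a_n^{l_n}=c\in K^\times$, after separating positive and negative exponents, yields a nontrivial polynomial vanishing at the tuple; the case $c=0$ is excluded because the $a_i$ are nonzero). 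One could add, for completeness, that $\Sigma$ is a linear subspace of $K^{2n}$ of dimension $n+1$, hence already irreducible and Zariski-closed, so the witness you produce is in fact a generic of $\Sigma$; but Lemma \ref{lm_char_r-free}(2) does not require this, so your argument is complete as stated.
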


\begin{theorem}\label{thm_genpoly}
Let $Y\seq K^n$ be a multiplicatively $n$-free constructible set and $P_1,\ldots,P_n\in \Z[X]\setminus\Z$. Let $\delta_1,\ldots,\delta_n\in K$. Then there exists a tuple $(a_1,\ldots,a_n)\in Y$ such that $P_1(\theta)(a_1) = \delta_1,\ldots,P_n(\theta)(a_n) = \delta_n$.
\end{theorem}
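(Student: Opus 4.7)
The plan is to reduce the statement to a single application of the example following Theorem \ref{thm_genhom_mult_version_defsets} by adjoining auxiliary coordinates encoding the iterates $\theta^{(j)}(a_i)$, in the spirit of Observation \ref{obs_keyformulaiteration}. Write $P_i(X) = \sum_{j=0}^{d_i} k_{i,j} X^j$ with $d_i = \deg P_i \geq 1$ and leading coefficient $k_{i,d_i} \neq 0$. Set $r = \sum_{i=1}^{n} d_i$ and introduce variables $x_i^{(j)}, z_i^{(j)}$ for $1 \leq i \leq n$ and $0 \leq j \leq d_i - 1$, organised into $r$-tuples $x$ and $z$. I will take $V \subseteq K^r \times K^r$ to be the constructible set cut out by the three conditions $(x_1^{(0)}, \ldots, x_n^{(0)}) \in Y$, the iteration equations $z_i^{(j)} = x_i^{(j+1)}$ for $0 \leq j \leq d_i - 2$, and the target equations
\[
\prod_{j=0}^{d_i - 1} (x_i^{(j)})^{k_{i,j}} \cdot (z_i^{(d_i-1)})^{k_{i,d_i}} = \delta_i \qquad (1 \leq i \leq n).
\]
Any $(a, \theta(a)) \in V$ then satisfies $a_i^{(j)} = \theta^{(j)}(a_i^{(0)})$ by the iteration equations, so the target equations become $P_i(\theta)(a_i^{(0)}) = \delta_i$, and setting $a_i := a_i^{(0)}$ finishes.

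It thus suffices to prove that $V$ is multiplicatively $r$-free. By Lemma \ref{lm_char_r-free}(2), I will produce $(\alpha, \beta) \in V(\K)$ with $x$-coordinates multiplicatively independent over $K$. Using multiplicative $n$-freeness of $Y$, first choose $(\alpha_1^{(0)}, \ldots, \alpha_n^{(0)}) \in Y(\K)$ multiplicatively independent over $K$; then, since $\K$ has infinite transcendence degree over $K$, pick the remaining $\alpha_i^{(j)}$ (for $j \geq 1$) algebraically independent over $K(\alpha_1^{(0)}, \ldots, \alpha_n^{(0)})$. A routine split of any putative multiplicative relation over $K$ into $j = 0$ and $j \geq 1$ blocks then shows that the full $r$-tuple is multiplicatively independent over $K$: algebraic (hence multiplicative) independence kills the $j \geq 1$ block, and the chosen multiplicative independence of $(\alpha_i^{(0)})_i$ over $K$ kills what remains. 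Finally, set $\beta_i^{(j)} = \alpha_i^{(j+1)}$ for $j \leq d_i - 2$, and take $\beta_i^{(d_i-1)}$ to be any $k_{i,d_i}$-th root in $\K$ of $\delta_i / \prod_{j=0}^{d_i-1} (\alpha_i^{(j)})^{k_{i,j}}$; this exists by algebraic closedness of $\K$ and $k_{i,d_i} \neq 0$, and is well-defined because each $\alpha_i^{(j)} \neq 0$ and $\delta_i \in K^\times$ (implicit in the statement, since $P_i(\theta)$ is an endomorphism of $K^\times$).

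The main obstacle is ensuring that the enriched ambient dimension $r = \sum_i d_i$ does not force extra multiplicative relations among the new $x$-coordinates. This is exactly what the freedom to choose the iterated coordinates $\alpha_i^{(j)}$ (for $j \geq 1$) algebraically independently in the monster provides, and the choice is compatible with the last target equation because the highest iterate only appears on the $\beta$-side, where it can be absorbed via the nonzero leading coefficient $k_{i,d_i}$. Once $V$ is shown multiplicatively $r$-free, the conclusion is immediate from the first-order axiom of ACFH.
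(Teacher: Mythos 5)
Your proof is correct, and it takes a genuinely different (and shorter) route than the paper's. The paper introduces $d_i+1$ auxiliary source variables per $P_i$ (namely $u_{i,0},\dots,u_{i,d_i}$), so that the target equation $u_{i,0}^{k_{i,0}}\cdots u_{i,d_i}^{k_{i,d_i}}=\delta_i$ lives purely on the source side; it then normalizes to $\gcd(k_{i,0},\dots,k_{i,d_i})=1$, packages these target equations into equations $(\alpha)$, proves via a dedicated claim that they generate a complete system of minimal equations $\tau$ in the partition with $r=\sum_i d_i$ free coordinates, exhibits an explicit m-generic of $\Gamma^0=\Gamma\wedge\tau\wedge\tau^\theta$, and only then invokes Theorem~\ref{thm_genhom_mult_version_defsets} with this non-trivial $\tau$. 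You instead introduce only $d_i$ source variables per $P_i$ and let the highest iterate $\theta^{(d_i)}(a_i)$ appear as the $\theta$-image coordinate $z_i^{(d_i-1)}$, so the target equation becomes a mixed source/image constraint; since you are not constraining the source tuple by any nontrivial m-variety, you can take the trivial $\tau$ (i.e.\ $t=0$) and appeal directly to the Example following Theorem~\ref{thm_genhom_mult_version_defsets}. This eliminates the $\gcd$ normalization, the claim, and the m-variety bookkeeping entirely, replacing them by a one-paragraph genericity argument showing $V$ is multiplicatively $r$-free, and that argument is sound (the split into the $j=0$ block and the algebraically independent $j\geq 1$ block correctly yields multiplicative independence of the full $r$-tuple, and the choice of $\beta_i^{(d_i-1)}$ is well-defined because the $\alpha_i^{(j)}$ and $\delta_i$ are nonzero). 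The residual content the two proofs share is the genericity construction showing the set fed to the axiom is multiplicatively $\sum_i d_i$-free; what you buy with your formulation is not having to verify that the resulting equations form an m-variety, which is the bulk of the paper's computation.
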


\begin{proof}
Assume that $Y\seq K^n$ is defined by the formula $\psi(z_1,\ldots,z_n)$. 
Let $P_1 = \sum_{j=0}^{d_1} k_{1,j} X^j,\ldots,P_n = \sum_{j=0}^{d_n} k_{n,j} X^j$ with $k_{1,d_1}\neq 0,\ldots,k_{n,d_n}\neq 0$ and $d_1>0, \ldots, d_n>0$. By Observation \ref{obs_keyformulaiteration}, for $d>0$, the formula $\Sigma_{d}(x_0,\ldots,x_d,y_0,\ldots,y_d) = \bigwedge_{i=1}^{d} x_i = y_{i-1}$ is such that $(\vec a,\theta(\vec a))\models \Sigma$ if and only if $a_i = \theta^{(i)}(a_0)$. Any $(\vec a,\theta(\vec a))$ satisfying 
\[\varphi_i(\vec x,\vec y):= \Sigma_{d_i}(\vec x,\vec y)\wedge x_0^{k_{i,0}}\ldots x_d^{k_{i,d}} = \delta_i\] satisfies $P_i(\theta)(a_0) = \delta_i$. 
Let $\set{z_{i,j}, t_{i,j}\mid 1\leq i\leq n; 0\leq j\leq d_i}$ be a new set of variables and consider the formula $\Gamma((z_{i,j}); (t_{i,j}))$ defined by the following:
\begin{align*}
    \psi(z_{1,0},\ldots,z_{n,0})&\wedge \varphi_1(z_{1,0},\ldots,z_{1,d_1}, t_{1,0},\ldots,t_{1,d_1})\\
    &\wedge \varphi_2(z_{2,0},\ldots,z_{2,d_2}, t_{2,0},\ldots,t_{2,d_2})\\
    &\vdots\\
    &\wedge \varphi_n(z_{n,0},\ldots,z_{n,d_n}, t_{n,0},\ldots,t_{n,d_n})
\end{align*}
We need to show that there exists a point $(\vec a,\theta(\vec a))\in \Gamma$.
First, we may assume that $\gcd(k_{1,0},\ldots,k_{1,d_1}) = 1$. Otherwise, change $P_1$ to $P'_1 = \sum_{j=0}^d \frac{k_{1,j}}{\gcd(k_{1,0},\ldots,k_{1,d_1})} X^j$, and $\delta_1$ to $\delta_1'$ such that $(\delta_1')^{\gcd(k_{1,0},\ldots,k_{1,d_1})} = \delta_1$. Similarly, for each $1\leq i\leq n$, we assume that $\gcd(k_{i,0},\ldots,k_{i,d_i}) = 1$.

Let $(\vec u,\vec v)$ be a generic of $\Gamma$. Then $\vec u = (u_{i,j})$ satisfies the following equations (later refereed to as  \textit{equations $(\alpha)$}):
\begin{align*}
    z_{1,d_1}^{k_{1,d_1}} &= \delta_1 z_{1,0}^{-k_{1,0}}\ldots z_{1,d_1-1}^{-k_{1,d_1-1}}\\ 
    z_{2,d_2}^{k_{2,d_2}} &= \delta_2 z_{2,0}^{-k_{2,0}}\ldots z_{2,d_2-1}^{-k_{2,d_2-1}}\\
    &\ldots\\
    z_{n,d_n}^{k_{n,d_n}} &= \delta_n z_{n,0}^{-k_{n,0}}\ldots z_{n,d_n-1}^{-k_{n,d_n-1}}
\end{align*}
\begin{claim}
    Those equations define a unique complete system of minimal equations associated to $N_1 = k_{d_1}>0,\ldots,N_n = k_{d_n}>0$ (hence an m-variety) that is satisfied by $\vec u$ for the partition $(u_{1,d_1},\ldots,u_{n,d_n} ; (u_{i,j})_{1\leq i\leq n ; 0\leq j\leq d_i-1})$. 
\end{claim}

\begin{proof}[Proof of the claim]
Let $l_1,\ldots l_n$ be such that $l_i\leq k_{i,d_i}$ and $\gcd(l_1,\ldots,l_n) = 1$. The set 
\[\set{x\in \N\mid \forall 1\leq i\leq n \ k_{i,d_i}\text{ divides } xl_i}\]
is nonempty (take $x = k_{1,d_1}\ldots k_{n,d_n}$), it admits a minimal element, we denote it $N$. We have $N l_i= k_{i,d_i}M_i$ for some $M_i\in \N$, and by minimality, $\gcd(N,M_1,\ldots,M_n) = 1$. We have that 
\begin{align*}
    (u_{1,d_1}^{l_1}\ldots u_{n,d_n}^{l_n})^{N} &= (u_{1,d_1}^{k_{1,d_1}})^{M_1}\ldots (u_{n,d_n}^{k_{n,d_n}})^{M_n}\\
    &=(\delta_1u_{1,0}^{-k_{1,0}}\ldots u_{1,d_1-1}^{-k_{1,d_1-1}})^{M_1}\ldots (\delta_n u_{n,0}^{-k_{n,0}}\ldots u_{n,d_n-1}^{-k_{n,d_n-1}})^{M_n}\\
    &= (\delta_1^{M_1}\ldots\delta_n^{M_n}) u_{1,0}^{-k_{1,0}M_1}\ldots u_{1,d_1-1}^{-k_{1,d_1-1}M_1}\dots u_{n,0}^{-k_{n,0}M_n}\ldots u_{n,d_n-1}^{-k_{n,d_n-1}M_n}
\end{align*}
Let \[R=\gcd(N,-k_{1,0}M_1,\ldots,-k_{1,d_1-1}M_1,\ldots,-k_{n,0}M_n,\ldots,-k_{n,d_n}M_n).\] We want to show that $R = 1$. By contradiction, assume that $q$ is a prime number dividing $R$. As $q$ divides $N$ and $Nl_1 = M_1k_{1,d_1}$, we have that $q$ divides $M_1$ or $k_{1,d_1}$. Assume that $q$ does not divide $M_1$, then on one hand, $q$ divides $k_{1,d_1}$. On the other hand $q$ divides $-k_{1,0}M_1,\ldots,-k_{1,d_1-1}M_1$, hence $q$ divides each $k_{1,j}$ for $0\leq j\leq d_1$, which contradicts $1 = \gcd(k_{1,0},\ldots,k_{1,d_1})$. We conclude that $q$ divides $M_1$, and similarly, $q$ divides $M_2,\ldots,M_n$, hence $q$ divides $\gcd(N,M_1,\ldots,M_n) = 1$, a contradiction. We conclude that $R = 1$, hence the above equation is an instance of a complete system of minimal equations.
\end{proof}

Let $\tau(z_{1,d_1},\ldots,z_{n,d_n} ; (z_{i,j})_{1\leq i\leq n ; 0\leq j\leq d_i-1})$ be the conjunction of equations $(\alpha)$ together with, for each $l_1,\ldots , l_n$ with $l_i\leq k_{i,d_i}$ and $\gcd(l_1,\ldots,l_n) = 1$, the equation:
\[(z_{1,d}^{l_1}\ldots z_{n,d}^{l_d})^{N} = (\delta_1^{M_1}\ldots\delta_n^{M_n}) z_{1,0}^{-k_{1,0}M_1}\ldots z_{1,d_1-1}^{-k_{1,d_1-1}M_1}\dots z_{n,0}^{-k_{n,0}M_n}\ldots z_{n,d_n-1}^{-k_{n,d_n-1}M_n}\]
for $N,M_1,\ldots,M_n$ as above (depending on $(l_1,\ldots,l_n)$). Then $\tau(K)$ is an m-variety. 

Let $\Gamma^0(\vec z,\vec t) = \Gamma(\vec z,\vec t)\wedge \tau(\vec z)\wedge \tau^\theta(\vec t)$. We now show that $\Gamma^0(K)$ projects m-generically onto $\tau(K)$, i.e. there is a generic $(\vec u,\vec v)$ of $\Gamma^0(K)$ such that $\vec u = (u_{1,d_1},\ldots,u_{n,d_n} ; (u_{i,j})_{1\leq i\leq n ; 0\leq j\leq d_i-1})$ is an m-generic of $\tau$, i.e. such that $(u_{i,j})_{1\leq i\leq n ; 0\leq j\leq d_i-1}$ is multiplicatively independent over $K$. 

We explicitely give such a generic, which also gives consistency of the formula $\Gamma^0$. As $Y$ is multiplicatively $n$-free, there exists a generic $(u_{1,0},\dots,u_{n,0})$ of $Y$ which is multiplicatively independent over $K$. We have $\psi(u_{1,0},\dots,u_{n,0})$. Let $\set{u_{i,j}\mid 1\leq i\leq n; 1\leq j\leq d_i-1}$ be a set of elements algebraically independent over $K(u_{1,0},\dots,u_{n,0})$. Let $(u_{1,d_1},\ldots,u_{n,d_n})$ be a tuple of solutions to the equations
\begin{align*}
    z_{1,d_1}^{k_{1,d_1}} &= \delta_1 u_{1,0}^{-k_{1,0}}\ldots u_{1,d_1-1}^{-k_{1,d_1-1}}\\ 
    z_{2,d_2}^{k_{2,d_2}} &= \delta_2 u_{2,0}^{-k_{2,0}}\ldots u_{2,d_2-1}^{-k_{2,d_2-1}}\\
    &\ldots\\
    z_{n,d_n}^{k_{n,d_n}} &= \delta_n u_{n,0}^{-k_{n,0}}\ldots u_{n,d_n-1}^{-k_{n,d_n-1}}
\end{align*}
For $\vec u = (u_{1,d_1},\ldots,u_{n,d_n} ; (u_{i,j})_{1\leq i\leq n ; 0\leq j\leq d_i-1})$, we have $\vec u \models \tau$. Now for $i = 1\ldots,n$ and $j = 0,\ldots,d_i-1$, let $v_{i,j} := u_{i,j+1}$. Then, for $i=1,\ldots,n$ and for all $x,y$, $\Sigma_{d_i}(x,u_{i,1},\ldots,u_{i,d_i},v_{i,0},\ldots,v_{i,d_i-1},y)$. Finally, let $(v_{1,d_1},\ldots,v_{n,d_n})$ be a tuple of solutions to the equations
\begin{align*}
    z_{1,d_1}^{k_{1,d_1}} &= \theta(\delta_1) v_{1,0}^{-k_{1,0}}\ldots v_{1,d_1-1}^{-k_{1,d_1-1}}\\ 
    z_{2,d_2}^{k_{2,d_2}} &= \theta(\delta_2) v_{2,0}^{-k_{2,0}}\ldots v_{2,d_2-1}^{-k_{2,d_2-1}}\\
    &\ldots\\
    z_{n,d_n}^{k_{n,d_n}} &= \theta(\delta_n) v_{n,0}^{-k_{n,0}}\ldots v_{n,d_n-1}^{-k_{n,d_n-1}}
\end{align*}
For $\vec v = (v_{1,d_1},\ldots,v_{n,d_n} ; (v_{i,j})_{1\leq i\leq n ; 0\leq j\leq d_i-1})$, we have $\tau^\theta(\vec v)$, and for $i=1,\ldots,n$ $\Sigma_{d_i}(u_{i,0},\ldots,u_{i,d_i},v_{i,0},\ldots,v_{i,d_i})$. Finally, $(\vec u,\vec v)$ is a generic of $\Gamma^0(K)$ by construction, and also by construction, $\vec u$ is an m-generic of $\tau(K)$.

As $\Gamma^0(K)\seq \tau(K)\times \tau^\theta(K)$ projects m-generically onto $\tau(K)$, we use the axioms (Theorem \ref{thm_genhom_mult_version_defsets}) to conclude that there exists $a = (a_1,\ldots,a_n)\in K^n$ such that $(a,\theta(a))\in \Gamma^0(K)$. By construction of $\Gamma^0$, we have $a\in Y$ and $P_1(\theta)(a_1) = \delta_1,\ldots,P_n(\theta)(a_n) = \delta_n$. 
\end{proof}

\begin{example}[$\ker P(\theta)+\ker Q(\theta) = K$]\label{ex_stablyembedded}
Let $P,Q\in \Z[X]\setminus\Z$, $b\in K$ and $Y\seq K^2$ be the variety defined by the equation $x+y = b$. By Theorem \ref{thm_genpoly}, there exists $(a_1,a_2)\in Y$ such that $P(\theta)(a_1)=1$ and $Q(\theta)(a_2)=1$, so $b\in \ker P(\theta)+\ker Q(\theta)$. In particular, for $P=Q$, we see that $\ker P(\theta)$ is stably embedded.
\end{example}

\begin{corollary}\label{cor_def_endo_surj}
Let $P\in \Z[X]\setminus\set{0}$, then $P(\theta)$ is surjective. 
\end{corollary}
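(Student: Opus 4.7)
The argument will split on whether $P$ is a constant polynomial. When $P = k \in \Z \setminus \{0\}$ is a non-zero constant, the map $P(\theta)$ is simply $x \mapsto x^{k}$, which is surjective on $K^{\times}$ because $K$ is algebraically closed and hence $K^{\times}$ is a divisible abelian group. So only the non-constant case requires genuine work.

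For $P \in \Z[X] \setminus \Z$, I plan to apply Theorem~\ref{thm_genpoly} directly with $n = 1$. Take $Y = K^{\times} \subseteq K$, viewed as the constructible set defined by $z \neq 0$. The set $Y$ is multiplicatively $1$-free: the Zariski closure of $Y$ is all of $\mathbb{A}^{1}_{K}$, whose generic point over $K$ is any element transcendental over $K$, and such an element $a$ satisfies $a^{\ell} \notin K$ for every $\ell \neq 0$, hence is multiplicatively independent over $K$. Given any $\delta \in K^{\times}$, Theorem~\ref{thm_genpoly} with $P_{1} = P$ and $\delta_{1} = \delta$ then produces $a \in K^{\times}$ with $P(\theta)(a) = \delta$, establishing surjectivity of $P(\theta): K^{\times} \to K^{\times}$.

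There is essentially no obstacle here; the corollary is a routine specialisation of Theorem~\ref{thm_genpoly} to $n = 1$, together with the trivial constant case handled by divisibility. The only small thing to record, for the reader, is why $K^{\times}$ qualifies as a multiplicatively $1$-free definable set, which I addressed above.
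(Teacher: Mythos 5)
Your proof is correct and takes essentially the same approach as the paper: the constant case by divisibility of $K^\times$, the non-constant case by applying Theorem~\ref{thm_genpoly} with $n=1$ and $Y$ the affine line minus the origin. The paper simply writes ``$Y=K$'' and omits the (easy) justification that this set is multiplicatively $1$-free, which you correctly supply.
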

\begin{proof}
If $P\in \Z\setminus\set{0}$, then $P(\theta) = \pw^n$ for some $n\neq 0$, so $P(\theta)$ is surjective since $K$ is algebraically closed. If $P\in \Z[X]\setminus \Z$, apply Theorem \ref{thm_genpoly} with $Y = K$.
\end{proof}

\begin{corollary}
The ring $\Z[\theta]$ is isomorphic to $\Z[X]$.
\end{corollary}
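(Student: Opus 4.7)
The plan is to show that the ring homomorphism $\Phi : \Z[X] \to \End(K^\times)$ defined above is injective; since by construction $\Z[\theta]$ is the image of $\Phi$, injectivity immediately yields the desired isomorphism $\Z[X] \cong \Z[\theta]$.

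First I would unpack what the zero element of the ring $\End(K^\times)$ (equivalently of $\Endd(K^\times)$) actually is. Since the ring addition is defined pointwise by $(\phi_1 \cdot \phi_2)(x) = \phi_1(x)\phi_2(x)$, the additive identity is the trivial endomorphism sending every $x \in K^\times$ to $1$. Hence $\Phi(P) = 0$ in $\End(K^\times)$ if and only if $P(\theta)(x) = 1$ for every $x \in K^\times$, i.e.\ if and only if $\ker P(\theta) = K^\times$.

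The key step is then to rule this out for every nonzero $P \in \Z[X]$, and here I would simply invoke Corollary \ref{cor_def_endo_surj}, which states that for any nonzero $P \in \Z[X]$, the endomorphism $P(\theta)$ is surjective onto $K^\times$. Since $K^\times$ has more than one element, a surjective endomorphism cannot be the constant map $x \mapsto 1$; hence $\Phi(P) \neq 0$ whenever $P \neq 0$, so $\ker \Phi = \{0\}$ and $\Phi$ is an isomorphism onto $\Z[\theta]$.

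There is essentially no obstacle here: all the work has already been done in Theorem \ref{thm_genpoly} and its corollary. The only thing one must be careful about is not to confuse the additive identity in $\End(K^\times)$ (the constant map $1$) with the zero map, since we are writing the group multiplicatively. Once that is clarified, the surjectivity statement of Corollary \ref{cor_def_endo_surj} immediately forces injectivity of $\Phi$ and the corollary follows in a single line.
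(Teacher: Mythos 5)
Your proof is correct and follows exactly the same route as the paper: both reduce injectivity of $\Phi$ to Corollary \ref{cor_def_endo_surj} (surjectivity of $P(\theta)$ for $P \neq 0$), noting that the additive identity of $\Endd(K^\times)$ is the constant map to $1$, which cannot be surjective. Your write-up is slightly more explicit about identifying the zero element of the endomorphism ring, but the argument is the same.
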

\begin{proof}
The map $\Phi:\Z[X]\rightarrow \Z[\theta]$ is a ring epimorphism. Let $P(X)\in \ker\Phi$, then $P(\theta)$ is not surjective, so by Corollary \ref{cor_def_endo_surj}, $P(X) = 0$. It follows that $\Phi$ is an isomorphism.
\end{proof}

\begin{question}
    Do we have $\Endd(K,\theta) = \Z[\theta]$?
\end{question}

\subsection{$\ker P(\theta)$ are generic multiplicative groups}

Let $\LL_G$ be the language $\LLr\cup\set{G}$ where $G$ is a unary predicate, and let $\ACF_G^\times$ be the $\LL_G$ theory of algebraically closed fields where $G$ is a distinguished multiplicative subgroup \textit{with $G = \vect{G}^\dv$}. From \cite[Theorem 5.27]{dE21A}, $\ACF_G^\times$ has a model-companion $\ACFG^\times$. We can deduce the following geometric characterisation of models of $\ACFG^\times$ from \cite[Theorem 1.5]{dE21A}.

\begin{fact}\label{fact_ACFG}
Let $(K,G)\models \ACF_G^\times$ be $\omega$-saturated. Then $(K,G)$ is existentially closed if and only if for every $n$ and $k\leq n$, for every non-empty multiplicatively $n$-free constructible set $Y\seq K^{n}$ over a finitely generated\footnote{Here we mean finitely generated in the sense of the pregeometry $\vect{\cdot}^\dv$, not in the sense of groups. Nontrivial divisible abelian groups are never finitely generated.} $C = \vect{C}^\dv$, there exists $(a_1,\dots,a_n)\in Y$ such that $\vect{a_1,\dots,a_k, G(C)}^\dv = G\cap \vect{a_1,\ldots,a_n,C}^\dv$.
\end{fact}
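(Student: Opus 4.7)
I would prove both directions of the equivalence via the standard extension-and-transfer strategy that is by now routine for generic subgroup expansions of ACF: first build an $\LL_G$-extension $(L,G_L)$ of $(K,G)$ realising the desired configuration, then pull it back using existential closedness together with $\omega$-saturation.

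\emph{Forward direction.} Assume $(K,G)$ is existentially closed. Given a nonempty multiplicatively $n$-free constructible $Y \subseteq K^n$ over the finitely generated divisible $C = \vect{C}^{\dv}$, pick an elementary extension $L \succ K$ containing a generic $(b_1,\dots,b_n)$ of $Y$ over $K$; multiplicative $n$-freeness forces $(b_1,\dots,b_n)$ to be multiplicatively independent over $K$. Set $G_L := \vect{G, b_1,\dots,b_k}^{\dv}$, which is divisible. A computation — using multiplicative independence of $(b_1,\dots,b_n)$ over $K$ together with divisibility of $G$, $C$, and $G(C) = G \cap C$ (the last being divisible as the intersection of two divisible subgroups of $K^\times$, so that all three contain every root of unity in $K$) — yields $G_L \cap K = G$ and the key equality
\[\vect{b_1,\dots,b_k, G(C)}^{\dv} = G_L \cap \vect{b_1,\dots,b_n, C}^{\dv}.\]
Thus $(L, G_L)$ is an $\LL_G$-extension of $(K,G)$ witnessing the desired configuration. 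This configuration is captured by a partial $\LL_G$-type $p(x_1,\dots,x_n)$ over $C$ asserting $\bar x \in Y$, $G(x_i)$ for $i \leq k$, and — equivalent to the $\supseteq$ inclusion once the $G(x_i)$ hold — $\neg G(x_{k+1}^{l_{k+1}}\cdots x_n^{l_n} \cdot c^{-1} \cdot x_1^{-l_1}\cdots x_k^{-l_k})$ for every $c \in C$ and integer tuple $(l_1,\dots,l_n)$ with $(l_{k+1},\dots,l_n) \neq 0$. Since $C$ is countable so is $p$; each finite fragment is a quantifier-free $\LL_G$-formula over $(K,G)$ realised in $(L,G_L)$, hence realised in $(K,G)$ by existential closedness, and $\omega$-saturation produces a simultaneous realisation of $p$.

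\emph{Backward direction.} Conversely, assume the geometric property and let $\varphi(\bar x)$ be a quantifier-free $\LL_G$-formula over $(K,G)$ realised in some $\LL_G$-extension. Introducing auxiliary variables for polynomial subterms reduces $\varphi$ to the normal form ``$\bar x \in Y$, $G(x_i)$ for $i \leq k$, $\neg G(x_i)$ for $k < i \leq n$'' for some nonempty constructible $Y \subseteq K^n$ over a finitely generated divisible $C$ absorbing all parameters. A witness in an extension, after replacement by a generic of $Y$ over $K$ in a sufficiently saturated ambient model, exhibits $Y$ as multiplicatively $n$-free. The hypothesis then produces $(a_1,\dots,a_n) \in Y(K)$ with the divisible closure equality: the $\subseteq$ inclusion places $a_1,\dots,a_k$ in $G$, while the $\supseteq$ inclusion — rephrased as multiplicative independence of $a_{k+1},\dots,a_n$ over $\vect{G, C, a_1,\dots,a_k}$, exactly as in the forward direction — forbids $a_i \in G$ for $i > k$. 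Hence $\varphi$ is realised in $(K,G)$.

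\emph{Main obstacle.} The technical heart is the divisible-closure intersection equality, which functions as a transfer principle between $G$-membership and multiplicative dependence over $C$. Getting the nontrivial $\supseteq$ inclusion right requires decomposing an element of the intersection as $g \cdot y$ with $g \in G$ and $y \in \vect{b_1,\dots,b_k}^{\dv}$, extracting exponent constraints from multiplicative independence of $(b_1,\dots,b_n)$ over $K$, and reabsorbing roots of unity using divisibility of $G(C)$. The same equality then powers the backward direction by faithfully encoding negated $G$-atomic conditions as multiplicative freeness; packaging arbitrary quantifier-free $\LL_G$-formulas into the normal form above is the remaining bookkeeping.
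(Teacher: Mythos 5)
The paper itself offers no proof here: the statement is labelled a \emph{Fact} and is cited as a consequence of \cite[Theorem 1.5]{dE21A}, so your from-scratch attempt is a genuinely different route. Your forward direction (existentially closed $\Rightarrow$ geometric property) is sound: the construction $G_L = \vect{G,b_1,\dots,b_k}^{\dv}$ over a generic of $Y$, the verification that $G_L \cap K = G$ via multiplicative independence of $\bar b$ over $K$, the translation of the $\supseteq$ inclusion into countably many negated $G$-atoms over $C$, and the use of $\omega$-saturation to realise the resulting partial type are all correct and essentially the expected argument.

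The backward direction, however, has two genuine gaps. First, the sentence ``a witness in an extension, after replacement by a generic of $Y$ over $K$, exhibits $Y$ as multiplicatively $n$-free'' is a non-sequitur: whether the generic of $Y$ over $K$ is multiplicatively independent over $K$ \emph{is} the definition of multiplicative $n$-freeness, and nothing about having a witness in an $\LL_G$-extension establishes it. After linearising a quantifier-free $\LL_G$-formula you will in general obtain a $Y$ that carries multiplicative dependencies — already $G(x_1)\wedge G(x_2)\wedge x_1^2 x_2^3=d$ produces a $Y$ whose generics satisfy $x_1^2 x_2^3 \in K$, so $Y$ is not multiplicatively $2$-free — yet the formula can perfectly well be realisable. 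A correct argument must extract a maximal multiplicatively independent subtuple of the witness over (a suitable enlargement of) $C$, pass to a smaller constructible set in those coordinates, show \emph{that} set is multiplicatively free, and then recover the remaining coordinates and verify the discarded $G$/$\neg G$ conditions separately. Second, even granting multiplicative $n$-freeness of $Y$, the divisible-closure equality supplied by the hypothesis does not force $\neg G(a_i)$ for $i>k$. You write that the $\supseteq$ inclusion is ``rephrased as multiplicative independence of $a_{k+1},\dots,a_n$ over $\vect{G,C,a_1,\dots,a_k}$,'' but this is a one-way implication: multiplicative independence implies the $\supseteq$ inclusion, not conversely. The tuple $(a_1,\dots,a_n)$ the hypothesis produces lives in $K^n$ and is therefore massively multiplicatively dependent over $K$; if $a_i\in G$ for some $i>k$, the $\supseteq$ inclusion merely places $a_i$ in $\vect{a_1,\dots,a_k,G(C)}^{\dv}$, which is not a priori contradictory. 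Closing both gaps is exactly the nontrivial content of the reference the paper leans on; as written, the backward direction does not go through.
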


 A natural question to ask is whether $(K,\ker \theta)$ or $(K,\ker P(\theta))$ are models of $\ACFG^\times$ whenever $(K,\theta)\models \ACFH$. The answer is no, $(K,\ker \theta)$ and $(K,\ker P(\theta))$ are not necessarily models of $\ACF_G^\times$, because in general $\ker P(\theta) \neq \vect{\ker P(\theta)}^\dv$. 

\begin{remark}[$\ker\theta \neq \vect{\ker\theta}^\dv$]\label{rk_ker_notrootclosed}
Let $n\geq 1$ and $\zeta$ be a generator of $\mu_n$. As $\theta$ is surjective, there exists $b$ such that $\theta(b) = \zeta$. Let $a = b^n$, then $a\in \ker\theta$ but $b\notin \ker\theta$, since $\zeta\neq 1$.
In the particular case where $\mu_\infty\seq \ker\theta$, we actually get that $\ker\theta$ is not divisible. Consider the isomorphism $K^\times/\ker\theta \cong K^\times$. If $\ker\theta$ were divisible, then the quotient $K^\times/\ker\theta$ would be torsion-free (using $\mu_\infty\seq \ker\theta$), which contradicts $K^\times/\ker \theta \cong K^\times$. Note that the condition $\ker\theta\neq \vect{\ker\theta}^\dv$ does not imply that $\ker\theta$ is not divisible. Actually, there are completions of $\ACFH$ where $\ker\theta$ is divisible, see Example \ref{ex_ker_div}.
\end{remark}

However, $\ker \theta$ present some form of genericity, which we define now. 

\begin{definition}
Let $K$ be a field and $G$ a subgroup of $K^\times$. We say that $G$ is \textit{generic in $K$} if for every $n $ and $k\leq n$, for every non-empty multiplicatively $n$-free constructible set $Y\seq K^{n}$ over a finitely generated $C = \vect{C}$ there exists $(a_1,\dots,a_n)\in Y$ such that $\vect{a_1,\dots,a_k,G(C)} = G\cap \vect{a_1,\ldots,a_n, C}$.
\end{definition}

\begin{theorem}\label{thm_ker_generic}
Let $(K,\theta)\models \ACFH$ and $P\in \Z[X]\setminus\set{0}$. Then $\ker P(\theta)$ is generic in $K$. 
\end{theorem}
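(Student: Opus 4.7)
The proof should be a direct application of Theorem \ref{thm_genpoly}. The interesting case is $P \in \Z[X] \setminus \Z$; when $P \in \Z \setminus \{0\}$, $P(\theta)$ is the power map and the statement reduces to a direct computation in algebraically closed fields (or is vacuous if $K = \ol{\F_p}$, where no multiplicatively $n$-free set exists).

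So fix $P \in \Z[X]\setminus\Z$, a non-empty multiplicatively $n$-free constructible set $Y \subseteq K^n$ over a finitely generated subgroup $C = \vect{C}$, and $k \leq n$. The plan is to apply Theorem \ref{thm_genpoly} with $P_1 = \dots = P_n = P$ and carefully chosen targets $\delta_1,\ldots,\delta_n$: take $\delta_1 = \dots = \delta_k = 1$ (forcing $a_1,\ldots,a_k \in \ker P(\theta)$), and choose $\delta_{k+1},\ldots,\delta_n$ to be multiplicatively independent over the finitely generated subgroup $P(\theta)(C) \subseteq K^\times$. Such $\delta_i$ exist because $K^\times$ has infinite multiplicative rank modulo any finitely generated subgroup, which holds as soon as $K$ has positive transcendence degree --- and when it does not (i.e.\ $K=\ol{\F_p}$), the statement is vacuous. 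Theorem \ref{thm_genpoly} then provides $(a_1,\ldots,a_n) \in Y$ with $P(\theta)(a_i) = \delta_i$ for each $i$.

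It remains to verify the equality $\vect{a_1,\ldots,a_k,\ker P(\theta)(C)} = \ker P(\theta) \cap \vect{a_1,\ldots,a_n,C}$. The inclusion $\subseteq$ is immediate since $a_1,\ldots,a_k \in \ker P(\theta)$ and $\ker P(\theta)(C) \subseteq \ker P(\theta)$. For the reverse inclusion, let $b \in \ker P(\theta) \cap \vect{a_1,\ldots,a_n,C}$ and write $b = a_1^{l_1}\cdots a_n^{l_n} c$ with $c \in C$. Applying $P(\theta)$ and using $P(\theta)(a_i) = \delta_i$ (with $\delta_i = 1$ for $i\leq k$) yields
\[
1 = P(\theta)(b) = \delta_{k+1}^{l_{k+1}}\cdots \delta_n^{l_n}\, P(\theta)(c),
\]
so $\delta_{k+1}^{l_{k+1}}\cdots \delta_n^{l_n} = P(\theta)(c^{-1}) \in P(\theta)(C)$. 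Multiplicative independence of the $\delta_{k+1},\ldots,\delta_n$ over $P(\theta)(C)$ forces $l_{k+1} = \cdots = l_n = 0$ and $P(\theta)(c) = 1$, i.e.\ $c \in \ker P(\theta) \cap C = \ker P(\theta)(C)$. Hence $b = a_1^{l_1}\cdots a_k^{l_k} c \in \vect{a_1,\ldots,a_k,\ker P(\theta)(C)}$.

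The only real work is Theorem \ref{thm_genpoly} itself, which is proved in the previous subsection; modulo this, the argument is a short piece of linear algebra in $K^\times / P(\theta)(C)$. The one point to be mindful of is the existence of enough multiplicatively independent targets $\delta_i$ over $P(\theta)(C)$, which is the reason the statement is about \emph{genericity} of $\ker P(\theta)$ rather than any stronger density condition.
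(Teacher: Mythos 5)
Your treatment of the main case $P \in \Z[X]\setminus\Z$ is essentially identical to the paper's proof: choose $\delta_1=\cdots=\delta_k=1$ and $\delta_{k+1},\ldots,\delta_n$ multiplicatively independent over $P(\theta)(C)$, apply Theorem~\ref{thm_genpoly}, and check the two inclusions exactly as you do. The observation about needing positive transcendence degree (otherwise vacuous) is a correct and slightly more careful formulation than the paper's ``the multiplicative group of any algebraically closed field is not finitely generated,'' which is literally true but does not give multiplicatively independent targets in $\ol{\F_p}$.

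However, your dismissal of the case $P \in \Z\setminus\{0\}$ as ``a direct computation in algebraically closed fields'' is not correct; the statement actually \emph{fails} for such $P$. If $P = m$, then $\ker P(\theta) = \mu_{|m|}(K)$, a finite group. Take $n = k = 1$, $C = \mu_{|m|}$ (finite, hence a finitely generated group), and $Y = K \setminus(\{0\}\cup\mu_{|m|})$, a non-empty multiplicatively $1$-free constructible set over $C$ as soon as $K \neq \ol{\F_p}$. Genericity would require $a \in Y$ with $\vect{a,\mu_{|m|}} = \mu_{|m|}\cap\vect{a,\mu_{|m|}}$; the left side contains $a$ and the right side is contained in $\mu_{|m|}$, forcing $a\in\mu_{|m|}$, contradicting $a\in Y$. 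So $\mu_{|m|}$ is not generic. Note that the paper's own proof also silently omits this case: Theorem~\ref{thm_genpoly} is stated only for $P_1,\ldots,P_n\in\Z[X]\setminus\Z$, so the invocation ``By Theorem~\ref{thm_genpoly}\ldots'' does not apply when $P\in\Z$. The correct hypothesis for Theorem~\ref{thm_ker_generic} should therefore be $P\in\Z[X]\setminus\Z$ rather than $P\in\Z[X]\setminus\set{0}$; with that correction your argument is complete and matches the paper's.
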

\begin{proof}
Let $Y\seq K^n$ be a multiplicatively $n$-free constructible set defined over a finitely generated set $C = \vect{C}$. Let $1\leq k \leq n$. The multiplicative group of any algebraically closed field is not finitely generated, hence there exist $\delta_i$ such that $\delta_i = 1$ for $1\leq i \leq k$ and $(\delta_j)_{k<j\leq n}$ is multiplicatively independent over the group $P(\theta)(C)$. By Theorem \ref{thm_genpoly}, there exists $(a_1,\ldots,a_n)\in X$ with $P(\theta)(a_i) = \delta_i$. We prove that $\vect{a_1,\ldots,a_n,C}\cap G = \vect{a_1,\ldots,a_k,G(C)}$ for $G = \ker P(\theta)$. First, for each $1\leq i\leq k$ we have $P(\theta)(a_i) = 1$ hence $a_i$ belongs to $G$ and so does $a_1^{s_1}\ldots a_k^{s_k}$ for each $(s_1,\ldots,s_k)\in \Z^k$. Let $s_1,\ldots,s_n\in \Z$.
If $a_1^{s_1}\ldots a_n^{s_n}c\in G$, then apply $P(\theta)$ to get $\delta_{k+1}^{s_{k+1}}\ldots \delta_n^{s_n} = P(\theta)(c)^{-1}$ hence by assumption $s_{k+1} = \ldots = s_n = 0$, so $a_1^{s_1}\ldots a_n^{s_n}c = a_1^{s_1}\ldots a_k^{s_k}c$. As $a_1^{s_1}\ldots a_k^{s_k}\in G$ we also have $c\in G$, hence $a_1^{s_1}\ldots a_n^{s_n}c\in \vect{a_1,\ldots,a_k,G(C)}$. The other inclusion is trivial.
\end{proof}

\begin{example}[Disjoint generic subgroups]
Let $P\in \Z[X]\setminus\Z$. Let $Q = P-1$. Then \[\fix P(\theta) := \set{x \mid P(\theta)(x) = x} = \ker Q(\theta)\]
Then both $\ker P(\theta)$ and $\fix P(\theta)$ are generic, and $\ker P(\theta) \cap \fix P(\theta) = \set{1}$.
\end{example}

\subsection{Iterations of generic is generic}\label{subsec_iteration}

We prove that iterating the generic endomorphism $\theta$ in a model of ACFH yields another generic endomorphism.


\begin{lemma}\label{lm_ext_iterate}
Let $(K,\theta)\models T$ and $(L,\zeta)\models T$ and extension of $(K,\theta^{(n)})$, for $n\in \N\setminus \set{0}$. Then there exists a field extension $F$ of $K$ and $L$ such that $(F,\theta)$ extends $(K,\theta)$ and $(F, \theta^{(n)})$ extends $(L,\zeta)$.
\end{lemma}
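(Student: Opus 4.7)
The plan is to construct $F$ as the algebraic closure of a compositum of $n$ algebraically independent copies of $L$ over $K$, with $\theta$ shifting each copy to the next cyclically so that the $n$-fold iterate restricts to $\zeta$ on the initial copy $L_0 = L$. Concretely, by iterating Remark \ref{rk_isomorphiccopy} I take copies $L_1, \ldots, L_{n-1}$ of $L$ over $K$ inside some large ambient algebraically closed field, together with field isomorphisms $\sigma_i : L \to L_i$ over $K$, arranged so that $L_0 := L, L_1, \ldots, L_{n-1}$ are algebraically independent over $K$. I then set $F$ to be the algebraic closure of $L_0 L_1 \cdots L_{n-1}$.

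The heart of the argument is the construction of compatible multiplicative maps $\phi_i : L_i^\times \to L_{i+1}^\times$ (indices cyclic, $L_n := L_0$), each restricting to $\theta$ on $K^\times$, whose composition $\phi_{n-1} \circ \cdots \circ \phi_0$ equals $\zeta$ on $L^\times$. For this, I use divisibility of $K^\times$ inside $L^\times$ to pick a direct complement $H$, so that $L^\times = K^\times \odot H$, and define $\tilde\phi_i : L^\times \to L^\times$ by $\tilde\phi_i(kh) = \theta(k) h$ for $i = 0, \ldots, n-2$ and $\tilde\phi_{n-1}(kh) = \theta(k) \zeta(h)$. A quick calculation, using the hypothesis $\zeta|_{K^\times} = \theta^{(n)}|_{K^\times}$, gives $\tilde\phi_{n-1} \circ \cdots \circ \tilde\phi_0(kh) = \theta^{(n)}(k) \zeta(h) = \zeta(k)\zeta(h) = \zeta(kh)$ on $L^\times$. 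Transporting via $\phi_i := \sigma_{i+1} \circ \tilde\phi_i \circ \sigma_i^{-1}$ (with $\sigma_0 = \sigma_n = \mathrm{id}$) yields multiplicative maps $L_i^\times \to L_{i+1}^\times$ with the required properties, each still restricting to $\theta$ on $K^\times$ since the $\sigma_i$ fix $K$.

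Finally, I glue the $\phi_i$'s into a single multiplicative map on $\vect{L_0^\times, \ldots, L_{n-1}^\times}$ by iterating Lemma \ref{lm_uniqueextendoproduct}: algebraic independence of the copies over $K$ implies that at each step $L_i^\times \cap \vect{L_0^\times, \ldots, L_{i-1}^\times} = K^\times$, this intersection is the divisible group $K^\times$, and the $\phi_i$'s all agree on it (all equal $\theta$), so the hypotheses of the lemma are met. Divisibility of $F^\times$ then extends this glued map to a multiplicative endomorphism $\theta_F$ of $F^\times$, and by construction $\theta_F|_{K^\times} = \theta$ while $\theta_F^{(n)}|_{L^\times} = \phi_{n-1} \circ \cdots \circ \phi_0 = \zeta$. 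The only delicate point in this plan is producing the intermediate maps $\tilde\phi_i$ — the direct-sum decomposition $L^\times = K^\times \odot H$ is what makes this possible, and this is precisely where the compatibility $\zeta|_K = \theta^{(n)}|_K$ enters essentially.
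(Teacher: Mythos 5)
Your proof is correct and follows essentially the same route as the paper: algebraically independent copies of $L$ over $K$, a direct complement $H$ of $K^\times$ in $L^\times$, isomorphisms shifting the complements cyclically, and divisibility of $F^\times$ to extend the glued map. The main cosmetic difference is that you organize the cyclic construction through ``prototype'' maps $\tilde\phi_i$ on $L^\times$ transported by $\sigma_i : L \to L_i$, and glue via iterated applications of Lemma \ref{lm_uniqueextendoproduct}, whereas the paper uses consecutive isomorphisms $\sigma_i : L_i \to L_{i+1}$ and writes the composite endomorphism $\theta_G$ directly on the decomposition $K^\times \odot H_0 \odot \cdots \odot H_{n-1}$; both compute the same map.
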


\begin{proof}
Let $L_0 = L$ and for each $1\leq i<n$, let $L_i\indi{\alg}_K\ L_0,\ldots,L_{i-1}$ with $L_i\cong_K L_0$. Let $\sigma_i:L_i\rightarrow L_{i+1}$ be a field isomorphism over $K$ witnessing $L_i\cong_K L_{i+1}$, for $i<n-1$. Each $\sigma_i$ defines a multiplicative homomorphism $L_i\rightarrow L_{i+1}$ fixing $K^\times $. As $K^\times$ is divisible, it is a direct factor in $L_0$, hence let $H_0$ be such that $L_0^\times = K \odot H_0$. Define inductively $H_i$ for $i = 1,\ldots,n-2$ by $H_{i+1} = \sigma_i(H_i)$. Then $L_i = K \odot H_i$, for each $0\leq i<n$. Each restriction $\theta_i := \sigma_i\upharpoonright H_i : H_i\rightarrow  H_{i+1}$ is a group isomorphism, and so is $\theta_0^{-1}\circ\ldots\circ \theta_{n-1}^{-1} : H_{n-1}\rightarrow H_0$. Note that $L_i\cap L_j = K$ hence $H_i\cap H_j = \set{1}$ for all $i\neq j$.

Let $F = (L_0\ldots L_{n-1})^\alg$.  Let $G = \vect{L_0,\ldots,L_{n-1}} = K^\times \odot H_0\odot \ldots \odot H_{n-1}$. $G$ is a subgroup of $F^\times$.

To define an endomorphism of $G$
we have the following homomorphisms on each factor:
\begin{itemize}
    \item on $K^\times$, we have $\theta: K^\times \rightarrow K^\times \seq G$; 
    \item on $H_i$, we have $\theta_i :H_i\rightarrow H_{i+1}\seq G$, for $i = 0,\ldots,n-2$;
    \item on $H_{n-1}$, we have $\theta_{n-1}:= \zeta\circ\theta_0^{-1}\circ\ldots\circ \theta_{n-2}^{-1}$, so $\theta_{n-1}: H_{n-1}\rightarrow K^\times \odot H_0 = L_0\seq G$.
\end{itemize}
We now define an endomorphism $\theta_G$ of $G$ by setting 
\[\theta_G(kh_0\ldots h_{n-1}) = \theta(k)\theta_0(h_0)\ldots \theta_{n-1}(h_{n-1}),\] for $k\in K,h_0\in H_0\ldots,h_{n-1}\in H_{n-1}$. As $F^\times$ is divisible, $\theta_G$ extends to an endomorphism $\theta_F$ of $F$. As $\theta_F\upharpoonright K = \theta$, $(F,\theta_F)$ extends $(K,\theta)$. Let $x\in L$, so $x = kh_0$ for $k\in K$ and $h_0\in H_0$. Then $\theta_F^{(n)}(x) = \theta_G^{(n)}(kh_0) = \theta^{(n)}(k)\theta_G^{(n)}(h_0)$. By definition, we have $\theta_G^{(n)} (h_0)= \theta_{n-1}(\theta_{n-2}\circ\ldots\circ \theta_0(h_0)) = \zeta(h_0)$. As $(L,\zeta)$ extends $(K,\theta^{(n)})$, we have $\theta^{(n)}(k) = \zeta(k)$, hence $\theta_F^{(n)}(x) = \zeta(k)\zeta(h_0) = \zeta(kh_0)$.
\end{proof}

\begin{proposition}\label{prop_iterations}
Let $(K,\theta)$ be a model of ACFH. Then, for all $n\in \N\setminus\set{0}$, $(K,\theta^{(n)})$ is a model of ACFH.
\end{proposition}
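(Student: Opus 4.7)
The plan is to show $(K, \theta^{(n)})$ is existentially closed as a model of $T$, from which the result follows by Corollary \ref{cor_modelcomplete}. So I take an arbitrary extension $(L, \zeta) \models T$ of $(K, \theta^{(n)})$ and a quantifier-free $\LL$-formula $\varphi(\bar x)$ with parameters from $K$ that has a realization $\bar a \in L^m$ (where $\varphi$ is interpreted with the symbol $\theta$ meaning $\zeta$). I need to produce a realization in $K$ with the symbol $\theta$ interpreted as $\theta^{(n)}$.

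The main move is to apply Lemma \ref{lm_ext_iterate}: this gives a field extension $F \supseteq K \cup L$ together with a multiplicative endomorphism $\theta_F$ of $F$ such that $(F, \theta_F)$ extends $(K, \theta)$ as an $\LL$-structure and $(F, \theta_F^{(n)})$ extends $(L, \zeta)$. In particular, $(F, \theta_F^{(n)}) \models \varphi(\bar a)$ (as $\zeta$ agrees with $\theta_F^{(n)}$ on $L$).

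Next I define $\tilde\varphi(\bar x)$ to be the quantifier-free $\LL$-formula obtained from $\varphi$ by syntactically replacing every occurrence of a subterm $\theta(t)$ by the $n$-fold iterate $\theta(\theta(\cdots\theta(t)\cdots))$. Then for any structure $(M, \eta) \models T$ and any tuple $\bar c \in M^m$, we have $(M, \eta^{(n)}) \models \varphi(\bar c)$ if and only if $(M, \eta) \models \tilde\varphi(\bar c)$. Applied to $(F, \theta_F)$, this gives $(F, \theta_F) \models \tilde\varphi(\bar a)$. Since $(F, \theta_F)$ is an extension of $(K, \theta)$ in $T$, and $(K, \theta) \models \ACFH$ is existentially closed (Corollary \ref{cor_modelcomplete}), there exists $\bar b \in K^m$ such that $(K, \theta) \models \tilde\varphi(\bar b)$. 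By the equivalence above applied to $(K, \theta)$, this is the same as $(K, \theta^{(n)}) \models \varphi(\bar b)$, providing the desired realization.

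The only delicate ingredient is Lemma \ref{lm_ext_iterate}, which has been proven independently; the rest is a routine exercise in how syntactic iteration of a function symbol interacts with semantic iteration of its interpretation, together with the standard definition of existential closedness applied in the base theory $T$. No obstacle is expected.
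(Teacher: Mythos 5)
Your proof is correct and follows essentially the same route as the paper: reduce to existential closedness of $(K,\theta^{(n)})$, invoke Lemma \ref{lm_ext_iterate} to get $(F,\theta_F)$, replace $\theta$ by its $n$-fold iterate in the formula, and use existential closedness of $(K,\theta)$ in $(F,\theta_F)$. The only cosmetic difference is that you phrase existential closedness in terms of quantifier-free formulas with a witness, while the paper works directly with existential sentences; these are the same thing.
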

\begin{proof}
We prove that if $(K,\theta)$ is an existentially closed model of $T$, then so is $(K,\theta^{(n)})$. Let $\tilde \theta = \theta^{(n)}$, and let $(L,\zeta)$ be an extension of $(K,\tilde\theta)$. By Lemma \ref{lm_ext_iterate}, there exists $(F,\theta_F)$ extending $(K,\theta)$ such that $(F,\theta_F^{(n)})$ extends $(L,\zeta)$. Let $\varphi$ be an existential $\LL$-formula with parameters in $K$ such that $(L,\zeta)\models \varphi$. Then $(F,\theta_F^{(n)})\models \varphi$. Let $\tilde \varphi$ be the $\LL$-formula obtained by replacing each occurence of the function symbol $\theta$ in $\varphi$ by $\theta^{(n)}$. Then $\tilde \varphi$ is still an existential formula and $(F,\theta_F)\models \tilde \varphi$. As $(K,\theta)$ is existentially closed in $(F,\theta_F)$, we also have $(K,\theta)\models \tilde \varphi$ and hence $(K,\theta^{(n)})\models \varphi$.
\end{proof}

Of course, $(K,\theta)$ and $(K,\theta^{(n)})$ do not define the same completions of ACFH in general. For instance, in characteristic $0$, let $\theta$ be any multiplicative endomorphism of $\Q$ exchanging $2$ and $3$, such endomorphism exists since $2$ and $3$ are multiplicatively independent. Then $\theta(2) = 3$ but $\theta^{(2)}(2) = 2$, so they define different completions of ACFH. 


\begin{question}
    Let $(K,\theta)$ be a model of ACFH. Is $(K,P(\theta))$ also a model of ACFH, for all $P\in \Z[X]\setminus \Z$?
\end{question}

\begin{question}
For $(K,\theta)$ a model of $T$ or ACFH, is the ultraproduct $\prod_\U (K,P_n(\theta))$ a model of ACFH, for some sequence $(P_i)_{i<\omega}\in \Z[X]$, and $\U$ an ultrafilter on $\omega$?
\end{question}

\subsection{The kernels $\ker P(\theta)$ are pseudofinite-cyclic groups}\label{subsection_pseudofinite}

 A \textit{pseudofinite abelian group} is a group which is elementary equivalent to an ultraproduct of finite abelian groups. We momentarily switch to additive notation for abelian groups. Pseudofinite abelian groups have been classically studied by Basarab \cite{Bas75} (and more recently by Herzog and Rothmahler \cite{HR09}). They are are elementary equivalent to groups of the form
 \[\bigoplus_{p\text{ prime}} \left[ \oplus_{n>0}\Z(p^n)^{\kappa_{p,n}}\oplus (\Z(p^\infty)\oplus \Z_p)^{\lambda_p}\right]\oplus \Q^\epsilon\]
for $\kappa_{p,n}, \lambda_p$ finite or countable, and $\epsilon=0$ or $\omega$. 

We switch back to multiplicative notation for abelian groups.

\begin{definition}
A group is \textit{pseudofinite-cyclic} if it is elementarily equivalent to an ultraproduct of finite cyclic groups.
\end{definition}

The following criterion for pseudofinite-cyclic groups is joint with I. Herzog, the proof is given in Appendix \ref{appx_psfc}.

\begin{fact}[d'Elbée-Herzog]\label{fact_herzog}
An abelian group $G$ is pseudofinite-cyclic if and only if for all prime $p$ we have:
\[\abs{\mu_p(G)} = \abs{G/\pw^p(G)}\leq p.\]
\end{fact}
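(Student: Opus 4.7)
The proof breaks into the forward implication, which is straightforward, and the backward one, which relies on the classification of complete theories of abelian groups due to Szmielew.

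For the forward direction, both ``$\abs{\mu_p(G)} = k$" and ``$\abs{G/\pw^p(G)} = k$" are expressible by a single sentence of the language of abelian groups for each $k \in \set{1, \ldots, p}$, so the hypothesis is first-order and preserved under elementary equivalence. For a finite cyclic group $C$ of order $n$, the $p$-th power endomorphism has both kernel and cokernel of size $\gcd(p, n) \in \set{1, p}$, so both invariants agree and are bounded by $p$. The property then passes to every ultraproduct of finite cyclic groups, hence to every pseudofinite-cyclic group.

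For the backward direction, I would first extend the hypothesis to all prime powers by induction, using the short exact sequence
\[
1 \to \mu_p(G) \to \mu_{p^{n+1}}(G) \to \mu_{p^n}(G) \cap \pw^p(G) \to 1,
\]
where the middle map is the $p$-th power, together with its dual for $G/\pw^{p^n}(G)$. Since $\abs{\mu_p(G)}$ is either $1$ or the prime $p$, the intersection $\mu_p(G) \cap \pw^p(G)$ is either trivial or equal to $\mu_p(G)$, and a short case analysis yields $\abs{\mu_{p^n}(G)} = \abs{G/\pw^{p^n}(G)} \leq p^n$ for every $n \geq 1$ and every prime $p$. I then define, for each prime $p$, the height $e_p(G) = \sup\set{n : \abs{\mu_{p^n}(G)} = p^n} \in \N \cup \set{\infty}$. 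By Szmielew's classification, the complete theory of $G$ is determined by a numerical invariant system, and the hypothesis forces all Ulm multiplicities at each prime to be $0$ or $1$ and rules out nontrivial $p$-adic-like torsion-free components (such as $\Z_{(p)}$ or the $p$-adic integers $\Z_p^\wedge$), so $\Th(G)$ is entirely controlled by the family $(e_p(G))_p$ together with an optional torsion-free divisible summand. For each finite fragment $\Sigma \subseteq \Th(G)$ only finitely many primes $p_1, \ldots, p_k$ and a bounded quantifier depth $N$ are involved, so one can select exponents $m_i$ with $\min(m_i, N) = \min(e_{p_i}(G), N)$ and verify that the finite cyclic group $\Z/(p_1^{m_1}\cdots p_k^{m_k})\Z$ satisfies $\Sigma$. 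A compactness argument combined with \L{}o\'s's theorem then realises $G$ as elementarily equivalent to an ultraproduct of finite cyclic groups.

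The main obstacle is the Szmielew-invariant analysis in the backward direction: extracting from the single-prime hypothesis enough control over $\Th(G)$ to guarantee that only ``cyclic-shaped" invariants are permitted, and matching them against a concrete ultraproduct. The inductive step raising the condition from exponent $p$ to exponent $p^n$ is the technical heart, since it precludes the non-cyclic $p$-local structures (finite or infinite Pr\"ufer groups, $p$-adic integers, higher-dimensional $\F_p$-spaces) which would otherwise obstruct the construction. The torsion-free divisible summand is harmless because $\Q$ itself satisfies the hypothesis and can be realised as an ultraproduct of $\Z/n_i\Z$ for integers $n_i$ whose $p$-adic valuations vanish on a set not in the ultrafilter, for each fixed $p$.
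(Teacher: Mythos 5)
Your forward direction is fine and matches the paper's argument (count kernel and cokernel of multiplication by $p$ on a finite cyclic group, observe the condition is elementary, and invoke {\L}o\'s).

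The backward direction, however, contains a genuine error. You claim the hypothesis \emph{rules out nontrivial $p$-adic-like torsion-free components (such as $\Z_{(p)}$ or $\Z_p^\wedge$)}. This is false, and the case it misses is exactly the interesting one. Computing the two sides of the hypothesis on a Szmielew group $\bigoplus_p\bigl[\bigoplus_{n>0}\Z(p^n)^{\kappa_{p,n}}\oplus\Z(p^\infty)^{\lambda_p}\oplus\Z_p^{\nu_p}\bigr]\oplus\Q^\epsilon$ gives $\abs{\mu_p(G)}=p^{\sum_n\kappa_{p,n}+\lambda_p}$ (the $\Z_p$ summand contributes no $p$-torsion) and $\abs{G/\pw^p(G)}=p^{\sum_n\kappa_{p,n}+\nu_p}$ (the Pr\"ufer summand contributes nothing to the cokernel since it is $p$-divisible). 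So the hypothesis forces $\lambda_p=\nu_p$ and $\sum_n\kappa_{p,n}+\lambda_p\le 1$; it does \emph{not} force $\nu_p=0$. The two legal local shapes are $\Z(p^{\alpha_p})$ and $\Z(p^\infty)\oplus\Z_p$, precisely as in the paper's Claim. Your claim would collapse the second shape to $\Z(p^\infty)$, but $\Z(p^\infty)$ alone fails the hypothesis ($\abs{\mu_p}=p$ while $\abs{G/pG}=1$) and is in fact not pseudofinite-cyclic: in any ultraproduct of finite cyclic groups, if almost all factors have $p$-torsion then the cokernel of multiplication by $p$ is nontrivial in the limit. The $\Z_p$ factor is exactly what records this obstruction, and it is the content of the ultraproduct the paper builds for $q\in Q$, namely $\prod_\U\bigoplus_{q\in Q}\Z(q^n)\equiv\bigoplus_{q\in Q}(\Z(q^\infty)\oplus\Z_q)$. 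If you feed your compactness argument the wrong local theory at such a $p$, the finite cyclic approximations will not converge to $G$.

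Secondary but worth flagging: the inductive step raising the condition from $p$ to $p^n$ is only asserted, and the remark that $\mu_p(G)\cap\pw^p(G)$ is trivial or all of $\mu_p(G)$ addresses only the bottom layer, not the term $\mu_{p^n}(G)\cap\pw^p(G)$ that appears in the exact sequence. The paper avoids this entirely: the hypothesis at the single exponent $p$ already pins down the Szmielew invariants, so no induction on prime powers is required. Likewise, the \emph{optional} torsion-free divisible summand $\Q^\epsilon$ is not always optional: if $P$ is finite and $Q=\emptyset$ and $\epsilon=0$ the group is finite, so $\epsilon$ is detected by a sentence and must be matched deliberately. If you correct the $\Z_p$ analysis and justify the induction (or drop it, following the paper), the remaining Feferman--Vaught-style approximation by finite cyclic groups is sound and is a reasonable alternative to the paper's explicit construction of the sequences $F_n$ and $K_n$.
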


\begin{remark}
This is satisfied by any finite cyclic group because of the short exact sequence
\[1\rightarrow \mu_p(G)\rightarrow G \rightarrow \pw^p(G)\rightarrow 1.\]
Of course, in a finite cyclic group $C$ of finite order, $\mu_p(C)$ is also cyclic and of order $\leq p$ for all prime $p$.
\end{remark}

\begin{lemma}\label{lm_herzog_inequ1}
Let $G\seq D$ be abelian groups with $D$ divisible. Let $n\in \N$ and assume that $\abs{\mu_n(D/G)}\leq \abs{\mu_n(D)} <\infty$.
Then $\abs{G/\pw^n(G)}\leq \abs{\mu_n(G)}$.
\end{lemma}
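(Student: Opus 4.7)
The plan is to apply the snake lemma to the short exact sequence
\[1 \to G \to D \to D/G \to 1\]
with the vertical maps given by the $n$-th power endomorphism $\pw^n$. The kernels along each column are $\mu_n(G)$, $\mu_n(D)$, $\mu_n(D/G)$; the cokernels are $G/\pw^n(G)$, $D/\pw^n(D)$, $(D/G)/\pw^n(D/G)$. Since $D$ is divisible, $\pw^n(D) = D$, so the middle cokernel vanishes.

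The snake lemma then produces an exact sequence
\[1 \to \mu_n(G) \to \mu_n(D) \to \mu_n(D/G) \to G/\pw^n(G) \to 1.\]
Because all groups in this sequence except possibly $G/\pw^n(G)$ are finite by the hypothesis $\abs{\mu_n(D)} < \infty$, the quotient $G/\pw^n(G)$ is also finite, and multiplicativity of cardinalities in an exact sequence of finite abelian groups gives
\[\abs{G/\pw^n(G)} = \frac{\abs{\mu_n(D/G)} \cdot \abs{\mu_n(G)}}{\abs{\mu_n(D)}}.\]
Applying the assumption $\abs{\mu_n(D/G)} \leq \abs{\mu_n(D)}$ yields $\abs{G/\pw^n(G)} \leq \abs{\mu_n(G)}$, as desired.

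There is no real obstacle: the whole statement is an instance of the snake lemma combined with the divisibility of $D$. The only point worth double-checking is that $D$ being divisible does give $\pw^n(D) = D$ (immediate from the definition of divisibility in multiplicative notation) and that the hypothesis $\abs{\mu_n(D)} < \infty$ ensures that all the relevant sizes are finite so that we may manipulate them as cardinals.
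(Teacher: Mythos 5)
Your proof is correct, and it takes a genuinely different route from the paper. The paper argues directly by contradiction: assuming $\abs{G/\pw^n(G)} > \abs{\mu_n(G)}$, it picks $M+1$ elements of $G$ in distinct cosets of $\pw^n(G)$, lifts them along $\pw^n$ using divisibility of $D$, multiplies by coset representatives of $\mu_n(D)/\mu_n(G)$, and shows the resulting elements lie in distinct classes of $\mu_n(D/G)$ — producing more than $\abs{\mu_n(D)}$ of them, a contradiction. Your approach instead applies the snake lemma to the exact sequence $1 \to G \to D \to D/G \to 1$ with $\pw^n$ as the vertical map, uses divisibility to kill the middle cokernel, and reads off the exact four-term sequence
\[
1 \to \mu_n(G) \to \mu_n(D) \to \mu_n(D/G) \to G/\pw^n(G) \to 1,
\]
so that the Euler-characteristic identity of cardinalities yields $\abs{G/\pw^n(G)} = \abs{\mu_n(G)}\cdot\abs{\mu_n(D/G)}/\abs{\mu_n(D)}$, from which the inequality is immediate. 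Your argument is slicker and actually proves more than the lemma asks for: it gives the exact value of $\abs{G/\pw^n(G)}$ rather than just an upper bound, and it makes transparent that the bound is sharp precisely when $\abs{\mu_n(D/G)} = \abs{\mu_n(D)}$. The paper's proof buys elementary self-containedness, avoiding any homological machinery, which may be the intent given that the appendix is otherwise quite hands-on. One small remark for a polished write-up: you should note explicitly that $\mu_n(G) \subseteq \mu_n(D)$ so $\abs{\mu_n(G)} < \infty$, and that the exact sequence together with finiteness of the first three terms forces $G/\pw^n(G)$ to be finite as well — you do gesture at this, and it is indeed the only point that needs saying.
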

\begin{proof}
Let $N = \abs{\mu_n(D)}$ and $M = \abs{\mu_n(G)}$. As $\mu_n(G)$ is a subgroup of $\mu_n(D)$, we have that $M$ divides $N$. Let $k = N/M$. By contradiction, assume that $\abs{G/\pw^n(G)}>M$, and let $g_1,\ldots,g_{M+1}\in G$ be such that $g_ig_j^{-1}\notin \pw^n(G)$, for $i\neq j$. 
As $D$ is $n$-divisible, for each $i$, there is $u_i\in D$ such that $\pw^n(u_i) = g_i$. Note that $u_iu_j^{-1}\notin G$, for all $i\neq j$. Let $k\geq 1$ and $\zeta_1,\ldots,\zeta_k$ be representatives of the classes of $\mu_n(D)$ modulo $\mu_n(G)$. For all $i,j$, we have $\pw^n(u_i\zeta_j)
= g_i$. 
\begin{claim}
    We have: $u_i\zeta_{i'} G= u_j\zeta_{j'}G$ if and only if $i = j$ and $i' = j'$.
\end{claim}
\begin{proof}[Proof of the claim]
Assume that $u_i\zeta_{i'} G= u_j\zeta_{j'}G$, then $u_iu_j^{-1} = \zeta_{j'}\zeta_{i'}^{-1} G$. Applying $\pw^{n}$, we get $g_ig_j^{-1} \in \pw^{n}(G)$, so $i=j$. If follows that $\zeta_{i'} G= \zeta_{j'}G$, so $i' = j'$. The converse is clear.
\end{proof}

From the claim that the set $\set{u_j\zeta_iG \mid 1\leq i\leq k, 1\leq j\leq M+1}$ is a subset of $\mu_n(D/G)$ of size $k(M+1) = N+k>N$. By hypothesis, $\abs{\mu_n(D/G)}\leq  N$, hence we reach a contradiction.
\end{proof}

The previous lemma generalizes the following: if $D$ is a divisible and torsion-free abelian group, $G$ a subgroup of $D$ such that $D/G$ is torsion-free, then $G$ is divisible.

\begin{example}[A completion of ACFH where $\ker\theta$ is a $\Q$-vector space]\label{ex_ker_div}
Let $F$ be a field containing all roots of $1$. Let $\theta_0: F^\times \rightarrow F^\times$ be a multiplicative monomorphism and $(K,\theta)$ a model of ACFH extending $(F,\theta_0)$. In particular, $K/\ker\theta\cong K$. Then $G:=\ker\theta$ is a $\Q$-vector space. Indeed, as $\ker\theta_0 = \set{1}$, we have $\mu_n(G) = \mu_n(\ker\theta_0) = \set{1}$ for all $n\in \N$, so $G$ is torsion-free. By Lemma \ref{lm_herzog_inequ1} with $D = K^\times$, we have $\abs{G /\pw^n(G)} = 1$, hence $\pw^n(G) = G$, i.e. $G$ is $n$-divisible. As it is torsion-free and divisible, it is a $\Q$-vector space. Note that in any model of this completion, $\ker\theta$ is pseudofinite: it satisfies Fact \ref{fact_herzog}: $\abs{\mu_n(G)} = \abs{G/\pw^n(G)} = 1\leq n$. But it also follows from the fact that ``$\ker\theta$ is divisible and torsion-free" is a first-order condition, and that torsion-free divisible abelian groups are pseudofinite\footnote{For instance $\prod_{p\in\U} (\Z/p\Z,+)$ is certainly a $\Q$-vector space (a $p$-group is $n$-divisible for all $n$ coprime to $p$).}. 
Note that being divisible is a different condition than $\ker\theta \neq \vect{\ker\theta}^\dv$, which always holds in a model of ACFH (see Example \ref{rk_ker_notrootclosed}). Note also that $K/G\cong K$ is not incompatible with $G$ being itself a $\Q$-vector space: take $K = \mu_{p^\infty}\odot \Q$ and $G = \Q$, then $K/G$ has the same torsion as $K$.
\end{example}
\begin{lemma}\label{lm_herzog_ineq2}
Let $D$ be a divisible abelian group with finite $n$-torsion for all $n$. Let $\phi:D\rightarrow D$ be a surjective homomorphism and $G = \ker \phi$. Then
 $\abs{G/\pw^n(G)}= \abs{\mu_n(G)}$, for all $n\in \N$.
\end{lemma}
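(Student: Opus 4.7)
The plan is to run the snake lemma on the short exact sequence
\begin{equation*}
1 \to G \to D \xrightarrow{\phi} D \to 1,
\end{equation*}
which is available because $\phi$ is surjective with kernel $G$, using the $n$-th power map $\pw^n$ as the vertical endomorphism on each term. Since $D$ is divisible, the map $\pw^n : D \to D$ is surjective, so its cokernel vanishes; accordingly, the usual six-term kernel-cokernel exact sequence produced by the snake lemma collapses to the four-term sequence
\begin{equation*}
1 \to \mu_n(G) \to \mu_n(D) \xrightarrow{\phi\upharpoonright_{\mu_n(D)}} \mu_n(D) \to G/\pw^n(G) \to 1,
\end{equation*}
in which the middle arrow is induced by $\phi$ and the last arrow is the snake connecting map.

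By hypothesis $\mu_n(D)$ is finite, hence every term of the sequence is finite. Multiplicativity of cardinality in a four-term exact sequence of finite abelian groups then yields
\begin{equation*}
\abs{\mu_n(G)} \cdot \abs{\mu_n(D)} = \abs{\mu_n(D)} \cdot \abs{G/\pw^n(G)},
\end{equation*}
whence $\abs{\mu_n(G)} = \abs{G/\pw^n(G)}$, as required.

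As a sanity check, one of the two inequalities is already available from Lemma \ref{lm_herzog_inequ1}: the isomorphism $D/G \cong D$ coming from surjectivity of $\phi$ gives $\abs{\mu_n(D/G)} = \abs{\mu_n(D)} < \infty$, so the hypotheses of Lemma \ref{lm_herzog_inequ1} are met and we obtain $\abs{G/\pw^n(G)} \leq \abs{\mu_n(G)}$. The substantive content of the present lemma is therefore the reverse inequality $\abs{\mu_n(G)} \leq \abs{G/\pw^n(G)}$, and the snake lemma argument above delivers exactly this (indeed it delivers the equality directly). There is no real obstacle: once one writes down the snake exact sequence and uses divisibility of $D$ to kill the right-hand cokernels, the lemma reduces to an order count.
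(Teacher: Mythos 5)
Your proof is correct, and it takes a genuinely different route from the paper. The paper proves the two inequalities separately: it invokes Lemma~\ref{lm_herzog_inequ1} (taking $D/G \cong D$) for $\abs{G/\pw^n(G)} \leq \abs{\mu_n(G)}$, then establishes $\abs{\mu_n(G)} \leq \abs{G/\pw^n(G)}$ by a hands-on counting argument: it restricts $\phi$ to $\mu_n(D)$, identifies $\phi(\mu_n(D)) \cong \mu_n(D)/\mu_n(G)$, picks coset representatives $\xi_1,\ldots,\xi_M$ of $\mu_n(D)$ modulo $\phi(\mu_n(D))$, lifts them through $\phi$ to $b_i \in D$, and shows the elements $a_i = b_i^n \in G$ land in pairwise distinct classes modulo $\pw^n(G)$. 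Your snake lemma argument replaces all of that with a single diagram chase: the vertical map $\pw^n$ on the short exact sequence $1 \to G \to D \to D \to 1$ has trivial cokernel on the two $D$'s by divisibility, so the six-term sequence collapses to the four-term sequence you wrote, and the Euler-characteristic identity for finite abelian groups gives the equality outright after cancelling $\abs{\mu_n(D)}$. One small check you leave implicit but which is needed to invoke the snake lemma is that the right-hand square commutes, i.e.\ $\phi \circ \pw^n = \pw^n \circ \phi$; this holds simply because $\phi$ is a homomorphism. Your approach is cleaner and more conceptual; the paper's is more elementary and reuses Lemma~\ref{lm_herzog_inequ1}, which it needs anyway for Example~\ref{ex_ker_div}. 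Both are valid.
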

\begin{proof}
We have $D/G\cong D$, hence $\mu_n(D/G) = \mu_n(D)$, and so  $\abs{G/\pw^n(G)}\leq \abs{\mu_n(G)}$ by Lemma \ref{lm_herzog_inequ1}. It suffices to show that $\abs{G/\pw^n(G)}\geq \abs{\mu_n(G)}$. Let $\phi_{0} = \phi\upharpoonright\mu_n(D):\mu_n(D)\rightarrow \phi(\mu_n(D))$. Then $\ker\phi_{0} = \mu_n(D)\cap \ker\phi = \mu_n(G)$. This means that $\phi(\mu_n(D))\cong \mu_n(D)/\mu_n(G)$. Let $N = \abs{\mu_n(D)}$, $M = \abs{\mu_n(G)}$ and $k=N/M$. Then $\abs{\phi(\mu_n(D))} = N/M = k$ and hence \[\abs{\mu_n(D)/\phi(\mu_n(D))} = N/k = M.\]
Let $\xi_1,\ldots,\xi_M$ be representatives of $\mu_n(D)$ modulo $\phi(\mu_n(D))$. As $\phi$ is surjective, there exist $b_1,\ldots,b_M\in D$ such that $\phi(b_i) = \xi_i$.

Let $a_i = b_i^n$, we have $\phi(a_i) = 1$, hence $a_i\in G = \ker\phi$. We prove that for $i\neq j$, $a_i$ and $a_j$ are in different classes modulo $\pw^n(G)$. Assume that $i\neq j$ and by contradiction $a_ia_j^{-1}\in \pw^n(G)$. Then there exists $g\in G$ such that $(b_ib_j^{-1})^n = g^n$, hence there exists $\zeta\in \mu_n(D)$ such that $b_ib_j^{-1} = \zeta g$. Applying $\phi$, we get $\xi_i\xi_j^{-1} = \phi(\zeta)$ (since $\phi(g)=1$), and so $\xi_i\xi_j^{-1}\in \phi(\mu_n(D))$, a contradiction. 
\end{proof}

\begin{theorem}\label{thm_ker_surj_endo_psfc}
Let $K$ be any field with $K^\times$ divisible. Let $\phi:K^\times\rightarrow K^\times$ be a surjective endomorphism. Then $\ker\phi$ is a pseudofinite-cyclic group.
\end{theorem}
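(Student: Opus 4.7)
The strategy is to verify the Herzog criterion (Fact \ref{fact_herzog}) for $G = \ker\phi$, namely that for every prime $p$ one has
\[
|\mu_p(G)| \;=\; |G/\pw^p(G)| \;\leq\; p,
\]
and then conclude that $G$ is pseudofinite-cyclic directly from that fact.

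First I would check that the hypotheses of Lemma \ref{lm_herzog_ineq2} are satisfied by $D = K^\times$. The group $K^\times$ is divisible by assumption, and for any field $K$ and any $n\in\N$ the $n$-torsion subgroup $\mu_n(K^\times)$ is the group of $n$-th roots of unity in $K$, which is a finite cyclic group of order dividing $n$ (by the standard fact that a polynomial of degree $n$ over a field has at most $n$ roots). In particular $|\mu_n(K^\times)|$ is finite for all $n$. Since $\phi$ is a surjective endomorphism of $K^\times$ with kernel $G$, Lemma \ref{lm_herzog_ineq2} applies and yields
\[
|G/\pw^n(G)| \;=\; |\mu_n(G)| \qquad \text{for every } n\in\N.
\]

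Next, for any prime $p$, the group $\mu_p(G)$ is a subgroup of $\mu_p(K^\times)$, which, as noted above, has cardinality at most $p$. Hence $|\mu_p(G)| \leq p$, and combining this with the equality from Lemma \ref{lm_herzog_ineq2} gives
\[
|\mu_p(G)| \;=\; |G/\pw^p(G)| \;\leq\; p
\]
for every prime $p$. Applying Fact \ref{fact_herzog} then finishes the proof: $G = \ker\phi$ is pseudofinite-cyclic.

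There is essentially no obstacle here since Lemma \ref{lm_herzog_ineq2} and Fact \ref{fact_herzog} do all the real work; the only thing to check is the trivial finiteness of $n$-th roots of unity in a field. The conceptual content sits in Lemma \ref{lm_herzog_ineq2} (where surjectivity of $\phi$ is essential to get the lower bound $|G/\pw^n(G)| \geq |\mu_n(G)|$) and in the Herzog criterion itself.
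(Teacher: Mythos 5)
Your proof is correct and matches the paper's argument exactly: apply Lemma \ref{lm_herzog_ineq2} with $D=K^\times$ (using divisibility and the fact that a field has at most $n$ roots of $x^n=1$), then invoke the Herzog criterion, Fact \ref{fact_herzog}. The paper states this in one line; you have simply spelled out the same steps.
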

\begin{proof}
Apply Fact \ref{fact_herzog} with Lemma \ref{lm_herzog_ineq2}, as $\mu_n(K^\times )\leq n$, for any field.
\end{proof}

Combining Theorem \ref{thm_ker_surj_endo_psfc} and Corollary \ref{cor_def_endo_surj}, we conclude the following.
\begin{corollary}\label{cor_kernelspseudofiniteACFH}
Let $(K,\theta)\models \ACFH$. Then $\ker(P(\theta))$ is pseudofinite-cyclic as a pure group, for all $P(X)\in \Z[X]$.
\end{corollary}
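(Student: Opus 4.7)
The proof is a direct combination of the two preceding results, as the excerpt essentially announces. First, I would observe that since $(K,\theta) \models \ACFH$ implies $K \models \ACF$, the multiplicative group $K^\times$ is divisible (it is an algebraically closed field, so every element has $n$-th roots for every $n \geq 1$). This verifies the first hypothesis of Theorem~\ref{thm_ker_surj_endo_psfc}.

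Second, I would invoke Corollary~\ref{cor_def_endo_surj} to conclude that for any $P \in \Z[X] \setminus \{0\}$, the definable endomorphism $P(\theta) : K^\times \to K^\times$ is surjective. This supplies the second hypothesis of Theorem~\ref{thm_ker_surj_endo_psfc} with $\phi = P(\theta)$.

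Applying Theorem~\ref{thm_ker_surj_endo_psfc} to $\phi = P(\theta)$, we conclude that $\ker P(\theta)$ is pseudofinite-cyclic as a pure abelian group. The only case to address separately is the degenerate $P = 0$, where $P(\theta)$ is the constant map $x \mapsto 1$ (the zero of the ring $\Endd(K^\times)$), so the statement is understood for $P \neq 0$; alternatively, the constant nonzero case $P \in \Z \setminus \{0\}$ reduces to $\ker(x \mapsto x^n) = \mu_n(K)$, which is literally finite cyclic, hence trivially pseudofinite-cyclic.

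There is no substantive obstacle: both ingredients are already proved, and the corollary is a purely formal consequence. The real content lies upstream, in Theorem~\ref{thm_genpoly} (which via Corollary~\ref{cor_def_endo_surj} ensures surjectivity of every $P(\theta)$) and in the Herzog--d'Elb\'ee criterion (Fact~\ref{fact_herzog}) combined with Lemma~\ref{lm_herzog_ineq2}, which drives Theorem~\ref{thm_ker_surj_endo_psfc}.
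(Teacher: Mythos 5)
Your proof is correct and matches the paper's own one-line proof exactly: combine Theorem~\ref{thm_ker_surj_endo_psfc} (surjective endomorphisms of a divisible $K^\times$ have pseudofinite-cyclic kernel) with Corollary~\ref{cor_def_endo_surj} (every $P(\theta)$ with $P\neq 0$ is surjective). Your attention to the degenerate case $P=0$ --- where $\ker P(\theta)=K^\times$ is \emph{not} pseudofinite-cyclic, so the quantifier ``for all $P\in\Z[X]$'' in the corollary's statement really should read $P\neq 0$ --- and to the elementary subcase $P\in\Z\setminus\set{0}$ is a small but genuine bit of extra care that the paper omits.
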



\subsection{Nonstandard power functions are not natural models of ACFH}\label{sub_natural_model}

We end this paper with considerations on the search for \textit{natural models} of ACFH. Here, \textit{natural} should be understood as somehow \textit{explicitely constructed} models, although \textit{explicitely} (and \textit{constructed}) here has to be considered relaxed to model theorists' standards. For instance, it is the author's opinion that the ultraproduct construction yields natural objects, provided the factors are explicit enough, a claim which is fairly debatable. This opinion is mostly inspired by classical beautiful results such as: a non-principal ultraproduct of finite fields is a model of the theory of pseudo-finite fields, a non-principal ultraproduct of $(\F_p^\alg, \Frob)$ is a model of ACFA. Of course those results are the terrain for connections and applications of model theory to other fields, and that is the underlying motivation behind this sort of questions. 

\begin{question}
Is there a family $(K_i,\theta_i)_{i\in I}$ of models of $T$ (which are not models of ACFH) such that for some ultraproduct $\U$ on $I$, the ultraproduct $\prod_\U (K_i,\theta_i)$ is a model of ACFH?
\end{question}

Let $\PP$ be the set of prime numbers and let $(s_p)_{p\in \PP}\in \N^\PP$. For each $p\in \PP$, the map $\pw^{s_p}:x\mapsto x^{s_p}$ is a multiplicative (surjective) map $\F_p^\alg\rightarrow \F_p^\alg$, hence $(\F_p^\alg,\pw^{s_p})$ is a model of $T$, for all $p\in \PP$.
Let $\U$ be any ultrafilter on $\PP$, and consider 
\[(\C,\theta) := \prod _\U  (\F_p^\alg,\pw^{s_p})\]
the ultraproduct of $(\F_p^\alg,\pw^{s_p})$ along $\U$. It is clear that $\ker(\theta)$ is pseudofinite. More generally $\ker P(\theta)$ is also pseudo-finite for $P\in \Z[X]$, since $P(\pw^{s_p})$ is a power function. A natural question to ask is the following:
 \begin{center}
    Is there a choice of $(s_p)_{p\in \PP}$ for which $(\C,\theta)$ is a model of ACFH?
\end{center}
The answer is no. A first intuitive reason is the following simple observation: $\pw^n$ is a multiplicative map that sends m-generics to m-generics, in particular, $\pw^n$ sends multiplicatively independent tuples to multiplicatively independent tuples (even though it is not injective). 
This should also hold in the ultraproduct (reasonning as in \cite[Lemma 3.10]{dEKN21}), but is not true in a model of ACFH, where the kernel of $\theta$ has infinite multiplicative dimension. We give a less informal (and simpler) argument.
\begin{lemma}\label{lm_ultraproduct_model}
Assume that $(K_i,\theta_i)_{i\in I}$ is a family of models of $T$ such that for some non-principal ultrafilter $\U$ on $I$, the ultraproduct $\prod_\U (K_i,\theta_i)$ is a model of ACFH. Then for almost all $i\in I$, $K_i = \ker\theta_i+\ker\theta_i$. In particular $\ker\theta_i$ is infinite, for almost all $i\in I$.
\end{lemma}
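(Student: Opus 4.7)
The plan is short: reduce to a first-order property of ACFH already established earlier, then invoke Łoś's theorem. By Example \ref{ex_stablyembedded} applied with $P = Q = X$, every model $(K,\theta)$ of ACFH satisfies $K = \ker\theta + \ker\theta$. Indeed, given $b \in K$, Theorem \ref{thm_genpoly} applied to the (multiplicatively $2$-free) variety $\{(x,y) \in K^2 : x+y = b\}$ produces a pair $(a_1, a_2)$ with $\theta(a_1) = \theta(a_2) = 1$ and $a_1 + a_2 = b$.

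The key observation is that this property is captured by a single first-order $\LL$-sentence:
\[
\sigma \;:=\; \forall x\,\exists y\,\exists z\,\bigl(\theta(y)=1 \wedge \theta(z)=1 \wedge x=y+z\bigr).
\]
Since $\prod_\U(K_i,\theta_i) \models \ACFH$, the ultraproduct satisfies $\sigma$. By Łoś's theorem, $(K_i,\theta_i) \models \sigma$ for $\U$-almost all $i \in I$, which is precisely the equality $K_i = \ker\theta_i + \ker\theta_i$. (Note that by the axioms of $T_0$, $\theta_i(x) = 1$ forces $x \neq 0$, so $\ker\theta_i \subseteq K_i^\times$ and the additive sum is a well-defined subset of $K_i$.)

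For the ``in particular'' clause: each $(K_i, \theta_i)$ is a model of $T$, so $K_i$ is algebraically closed and hence infinite. If $\ker\theta_i$ were finite of cardinality $n$, then $\ker\theta_i + \ker\theta_i$ would have cardinality at most $n^2$, contradicting the equality $K_i = \ker\theta_i + \ker\theta_i$. Therefore $\ker\theta_i$ is infinite for $\U$-almost all $i$.

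There is no real obstacle in this argument --- the substantive work is already packaged into Example \ref{ex_stablyembedded} (which itself rests on Theorem \ref{thm_genpoly}). The lemma is a direct translation of that example through Łoś's theorem, together with the elementary cardinality estimate $|A+A| \leq |A|^2$.
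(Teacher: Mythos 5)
Your proof is correct and follows essentially the same route as the paper: the paper's own proof simply cites Example \ref{ex_stablyembedded}, notes first-order expressibility, and invokes Łoś. You have spelled out the witnessing sentence $\sigma$ and the cardinality estimate for the "in particular" clause, which the paper leaves implicit, but the argument is the same.
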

\begin{proof}
This is true in any model of ACFH, by Example \ref{ex_stablyembedded} and it is expressible as a first order property, so it must hold for almost all $(K_i,\theta_i)$.
\end{proof}

Regardless of the choice of $s_p\neq 0$, the kernel of $\pw^{s_p}$ is finite, hence for all $(s_p)_{p\in \PP}\in (\N\setminus\set{0})^\PP$ $(\C,\theta)$ is not a model of ACFH. We are facing an unintuitive phenomenon: for any model $(K,\theta)$ of ACFH, $\ker\theta$ is pseudofinite as a pure group, however there is no family $(K_i,\theta_i)_{i\in I}$ of models of $T$ with $\ker\theta_i$ finite such that $(K,\theta)\equiv \prod_\U (K_i,\theta_i)$ for some ultrafilter $\U$ on $I$. 

\begin{remark}\label{rk_towardtheMTofCtheta}
 There is room for several variants of the construction of ACFH. One may add to the theory $T$ the condition `$\theta$ is injective'. In this case, the kernel of $\theta$ vanishes, but not necessarily the kernels of other definable endomorphisms, such as $x\mapsto x^{-1}\theta(x)$ (i.e. the subgroup of elements fixed by $\theta$). One could ask for all kernels to vanish at the same time, and hope that a model-companion exists, with $\ker P(\theta) = \set{1}$ for all $P\in \Z[X]$. 

In order to detect the genericity of the structure $(\C,\theta)$, one might consider another variant of the construction of ACFH, which comprise the condition `$\theta$ sends m-generics to m-generics'. We keep the same theory $T$, but we restrict the notion of extensions of models of $T$: $(K,\theta_K)\seq (L,\theta_L)$ is called \textit{kernel-preserving} if $\ker\theta_K = \ker\theta_L$. Then a similar approach as in Section \ref{sec:ecmodels} should yield: $(K,\theta)$ is existentially closed in every kernel-preserving extension if and only if  for all m-varieties $\tau\seq K^n$ over $K$ and every affine variety $V\seq \tau\times \tau^\theta$ which projects m-generically onto $\tau$ \textit{and onto $\tau^\theta$}, there exists a tuple $a\in K^n$ such that $(a,\theta(a))\in V$. Let $\CC$ be the class of models of $T$ which are existentially closed in every kernel-preserving extension. It should be clear that this class is not elementary in general, but one should be able to consider some particular cases. Let $T_{G_0}$ be the extension of $T$ by constants for a subgroup $G_0$ (of the source of $\theta$) and expressing that the kernel of $\theta$ contains $G_0$. Let $\CC_{G_0}$ be the class of models of $T_{G_0}$ which are existentially closed in every kernel-preserving extension. It should be true that for elements of $\CC_{G_0}$, the kernel of $\theta$ is always $G_0$. If $G_0$ is finite, then this class is the same as existentially closed models of the theory $T$ expanded by the (first order) condition `$\ker \theta = G_0$'. This class should be first-order axiomatizable, using similar methods as those in the present paper. If $G_0$ is infinite, then $G_0$ should be an infinite definable set of bounded size, hence the class $\CC_{G_0}$ should not be elementary axiomatizable. Positive logic might be promising for studying the class $\CC_{G_0}$ for $G_0$ infinite.

\end{remark}

\appendix

\section{Proof of Proposition \ref{prop_ext_mult}}\label{appx_proofcms}

We now give a full proof of Proposition \ref{prop_ext_mult}.

\begin{lemma}\label{lm_ordersingleelement_mult}
Let $C = \vect C ^\dv\seq K^\times$, $b\in K$ and $a=(a_1,\dots,a_r)\in (K^\times)^r$ be multiplicatively independent over $C$. Assume that for some $m\in \N\setminus \set{0}$ and $l_1,\dots,l_r\in \N$ we have $b^m = ca_1^{l_1}\ldots a_r^{l_r}$. Then $\gcd(m,l_1,\dots,l_r) = 1$ if and only if $m$ is the smallest $n>0$ such that $b^n\in \vect{Ca}$. 
\end{lemma}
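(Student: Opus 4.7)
The proof naturally splits into the two implications.

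For the forward direction, assume $\gcd(m,l_1,\ldots,l_r)=1$ and let $n>0$ be minimal with $b^n\in\vect{Ca}$. First I would show $n$ divides $m$ by euclidean division: writing $m=qn+s$ with $0\le s<n$, one gets $b^s = b^m \cdot (b^n)^{-q} \in \vect{Ca}$, so $s=0$ by minimality. Writing $b^n = c' a_1^{k_1}\cdots a_r^{k_r}$, raising to the $q$-th power and comparing with $b^m = c a_1^{l_1}\cdots a_r^{l_r}$ gives
\[c(c')^{-q} = a_1^{qk_1-l_1}\cdots a_r^{qk_r-l_r}.\]
Multiplicative independence of $a=(a_1,\ldots,a_r)$ over $C$ forces $l_i = qk_i$ for all $i$. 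Hence $q$ divides each $l_i$, and $q$ divides $m=qn$, so $q$ divides $\gcd(m,l_1,\ldots,l_r)=1$, giving $q=1$ and $n=m$.

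For the converse, suppose $m$ is the smallest positive integer with $b^m\in\vect{Ca}$, and set $d=\gcd(m,l_1,\ldots,l_r)$. I aim to show $d=1$ by deriving a contradiction from $d>1$. Writing $m=dm'$ and $l_i=dl_i'$, the equation $b^m = ca_1^{l_1}\cdots a_r^{l_r}$ becomes $(b^{m'})^d = c\cdot(a_1^{l_1'}\cdots a_r^{l_r'})^d$. Since $C=\vect{C}^\dv$ is divisible, pick $c_0\in C$ with $c_0^d=c$; then $(b^{m'})^d = (c_0 a_1^{l_1'}\cdots a_r^{l_r'})^d$, so
\[b^{m'} = \zeta\cdot c_0\cdot a_1^{l_1'}\cdots a_r^{l_r'}\]
for some $d$-th root of unity $\zeta\in\mu_\infty(K)$. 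The key observation is that $\mu_\infty(K) = \acl_{\Tm}(\emptyset)\subseteq\acl_{\Tm}(C) = \vect{C}^\dv = C$, so $\zeta\in C$ and therefore $b^{m'}\in\vect{Ca}$. Since $m'=m/d<m$, this contradicts the minimality of $m$.

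The main conceptual point, and the potentially subtle step, is ensuring that the root of unity $\zeta$ appearing in the converse actually lies in $C$; this is precisely where the hypothesis $C=\vect{C}^\dv$ (and not merely divisibility) is used, via the fact that all roots of unity are $\LLm$-algebraic over the empty set. The remainder is a straightforward euclidean-division argument combined with the multiplicative independence of $a$.
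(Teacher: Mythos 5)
Your proof is correct and follows essentially the same route as the paper's: euclidean division together with multiplicative independence for the forward direction, and factoring through the $d$-th power (using divisibility of $C$ and the inclusion $\mu_\infty(K)\subseteq C=\vect{C}^\dv$ to absorb the resulting root of unity) for the converse. The only cosmetic difference is that you make explicit the root of unity $\zeta$ and the inclusion $\mu_\infty(K)=\acl_{\Tm}(\emptyset)\subseteq C$, where the paper compresses this to ``$C$ contains the elements of $d$-torsion.''
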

\begin{proof}
If $b^n = ca_1^{l_1}\ldots a_r^{l_r}$, $\gcd(n,l_1,\dots,l_r) = 1$ and $n$ is not minimal, let $m$ be the minimal $m>0$ such that $b^m\in \vect{Ca}$. We have $m<n$. Consider the euclidean division of $n$ by $m$, minimality of $m$ implies that $m$ divides $n$, say $mq = n$. If $b^m = c'a_1^{m_1}\ldots a_r^{m_r}$, then we get $1 = cc'^{-q}a_1^{l_1-qm_1}\ldots a_r^{l_r-qm_r}$. As $a$ is multiplicatively independent over $C$, we get $l_i-qm_i = 0$ hence $q$ divides each $l_i$, a contradiction. 

Conversely, if $n = \min\set{m\mid b^m\in \vect{Ca}}$, let $d = \gcd(n,l_1,\ldots,l_r)$. Let $n = dm$ and $l_i = dk_i$. As $C = \vect{C}^\dv$, there exists $c'$ such that $c'^d = c$. We get 
\[(b^{m})^d = (c'a_1^{k_1}\ldots a_r^{k_r})^d.\]
As $C$ contains the elements of $d$-torsion, it follows that there is $c''\in C$ such that $b^{m} = c''a_1^{k_1}\ldots a_r^{k_r}$ so $b^{m}\in \vect{Ca}$. By minimality of $n$, we have $n = m$ hence $1= d = \gcd(n,l_1,\ldots,l_r)$.
\end{proof}

Assuming $b\in \vect{Ca}^\dv$. If $m\in\N$ is the order of $b$ over $\vect{Ca}$, then $\set{n\in \N\mid b^n\in \vect{Ca}} = m\N$. If $b^m = ca_1^{l_1}\ldots a_r^{l_r}$, then \[b^\Z\cap \vect{Ca} = b^{m\Z} = \set{c^na_1^{nl_1}\ldots a_r^{nl_r}\mid n\in \Z}.\]

\begin{lemma}\label{lm_ext_mult}
Let $C = \vect C ^\dv, a = (a_1,\dots,a_r)\in (K^\times)^r, b = (b_1,\dots, b_t)\in (K^\times)^t$. Assume that $a$ is multiplicatively independent over $C$. 
\begin{enumerate}
    \item $b\subseteq \vect{Ca}^\dv$ if and only if there is a unique complete system of minimal equations $\tau(x;y)$ over $C$ such that $K\models \tau(a;b)$.
    \item Let $\theta: C\rightarrow C'$ be any multiplicative homomorphism, and let $a',b'\subseteq K$ be such that $K\models \tau(a;b)\wedge \tau^\theta(a';b')$, for some complete system of minimal equations $\tau(x;y)$ over $C$. Then there is a multiplicative homomorphism $\theta': \vect{Cab}\rightarrow \vect{C'a'b'}$ extending $\theta$ and such that $\theta(ab) = a'b'$.
\end{enumerate}
\end{lemma}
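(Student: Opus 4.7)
My plan is to handle the two parts separately, using Lemma~\ref{lm_ordersingleelement_mult} as the central tool.

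For (1), the backward direction is immediate: each equation $y_i^{n_i} = cx^l$ in a complete system of minimal equations forces $b_i^{n_i} \in \vect{Ca}$, hence $b_i \in \vect{Ca}^\dv$. For the forward direction, assuming $b \subseteq \vect{Ca}^\dv$, I would construct $\tau$ directly: let $n_i$ be the order of $b_i$ over $\vect{Ca}$, and for each $\kappa \in \CC$ let $N_\kappa$ be the order of $b^\kappa$ over $\vect{Ca}$. By multiplicative independence of $a$ over $C$, the decomposition $(b^\kappa)^{N_\kappa} = c_\kappa a^{l^{(\kappa)}}$ is unique, and Lemma~\ref{lm_ordersingleelement_mult} yields $\gcd(N_\kappa, l^{(\kappa)}) = 1$, so the resulting collection is a complete system of minimal equations satisfied by $(a;b)$. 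Uniqueness is then a direct application of the same lemma in reverse: any $\tau'$ satisfied by $(a,b)$ must have $N$-exponents equal to these minimal orders, and then $c_\kappa$ and $l^{(\kappa)}$ are pinned down by multiplicative independence of $a$ over $C$.

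For (2), I would define $\theta'(c a^l b^k) := \theta(c) a'^l b'^k$ and argue that this descends to a well-defined homomorphism $\vect{Cab} \to \vect{C'a'b'}$. Letting $\pi : \Z^r \oplus \Z^t \oplus C \to \vect{Cab}$, $(l, k, c) \mapsto c a^l b^k$, and $\pi'$ the analogous surjection with primes, well-definedness is equivalent to: the transfer map $(l, k, c) \mapsto (l, k, \theta(c))$ sends $\ker \pi$ into $\ker \pi'$. Each equation in $\tau$ contributes an element $g_\kappa := (-l^{(\kappa)}, N_\kappa \kappa, c_\kappa^{-1}) \in \ker \pi$, and its image under the transfer is the element of $\ker \pi'$ witnessing $(b'^\kappa)^{N_\kappa} = \theta(c_\kappa) a'^{l^{(\kappa)}}$, which holds because this is precisely an equation of $\tau^\theta$ and $(a',b')$ satisfies $\tau^\theta$ by hypothesis.

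The main obstacle is showing that the $g_\kappa$, together with the $g_{e_i}$ coming from $\kappa = e_i \in \CC$ with $N_{e_i} = n_i$, generate all of $\ker \pi$. Given $(l, k, c) \in \ker \pi$ with $k \neq 0$, I would first write $k_i = q_i n_i + r_i$ with $0 \leq r_i < n_i$ and use the $g_{e_i}^{q_i}$ to reduce to the residue $r$, for which $b^r \in \vect{Ca}$. If $r = 0$, one gets $(l, k, c) = \prod_i g_{e_i}^{q_i}$ directly; otherwise set $m := \gcd(r_1, \ldots, r_t)$ and $\kappa := r/m$, which lies in $\CC$ since $0 \leq \kappa_i < n_i$. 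The minimality of $N_\kappa$ forces $N_\kappa \mid m$, and an explicit computation---pinning down the $C$-coefficient and $\Z^r$-exponent of $a$ via multiplicative independence---shows $(l, k, c) = g_\kappa^{m/N_\kappa} \cdot \prod_i g_{e_i}^{q_i}$. The case $k = 0$ is immediate from multiplicative independence. Once this generation claim is proved, well-definedness of $\theta'$ follows, and the resulting map is a homomorphism extending $\theta$ and sending $(a,b)$ to $(a',b')$ by construction.
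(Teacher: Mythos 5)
Your proof is correct and follows the same essential strategy as the paper: in both parts the key moves are Lemma~\ref{lm_ordersingleelement_mult} together with euclidean division of the $b$-exponents by the $n_i$, then multiplicative independence of $a$ over $C$ to pin down the remaining data. Part~(1) is essentially identical. For part~(2) your packaging is genuinely cleaner: you recast well-definedness of $\theta'$ as the statement that the transfer $(l,k,c)\mapsto(l,k,\theta(c))$ carries $\ker\pi$ into $\ker\pi'$, and then reduce to showing that the elements $g_\kappa$ (corresponding to the equations of $\tau$) generate $\ker\pi$. The paper instead verifies directly, relation by relation, that any identity $b^m = c\,a^u$ transfers to $b'^m = \theta(c)\,a'^u$; moreover it uses a two-step gcd reduction (first $d = \gcd(r_i, v_i)$ involving the $a$-exponents, then $N = \gcd(s_i)$), whereas your use of $m = \gcd(r_i)$ together with the minimal order $N_\kappa$ and uniqueness from part~(1) gets there in one step. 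The computational substance is the same; your factorization into ``$\ker\pi$ is generated by the $g_\kappa$'' plus ``the transfer is a homomorphism preserving these generators'' is a conceptually tidier route to the same destination.
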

\begin{proof}
(1) From right to left, if there is a complete system of minimal equations $\tau(x;y)$ over $C$ such that $K\models \tau(a;b)$, then in particular, $b_i$ satisfy equations of the form $b_i^{n_i} = ca_1^{k_1}\ldots a_t^{k_t}$ for $n_i\in \N\setminus\set{0}$, so $b\seq \vect{Ca}^\dv$. For the other direction, assume $b\subseteq \vect{Ca}^\dv$. Let $n_i$ be the order of $b_i$ over $\vect{Ca}$ and let $\CC = \set{(k_1,\dots,k_t)\mid 0\leq k_i\leq n_i \ \gcd(k_1,\ldots,k_t) = 1}$. For each $(k_1,\dots,k_t)\in \CC$, $b_1^{k_1}\ldots b_t^{k_t}$ is of finite order over $\vect{Ca}$ (bounded by $n_1\ldots n_t$), hence by Lemma \ref{lm_ordersingleelement_mult}, there exists $c\in C$, $N\in \N$ and $l_1,\ldots,l_r\in \Z$ such that $\gcd(N,l_1,\ldots,l_r) = 1$ and ${(b_1^{k_1}\ldots b_t^{k_t})}^N = ca_1^{l_1}\ldots a_r^{l_r}$. For $(k_1,\ldots,k_r) = (0\ldots,1,\ldots0)$ ($1$ at the $i$-th position), it yields $b_i^N= ca_1^{l_1}\ldots a_r^{l_r}$ so by Lemma \ref{lm_ordersingleelement_mult}, $N = n_i$. Let $\tau(x;y)$ be the cunjunction of each equation for each $(k_1,\ldots,k_t)\in \CC$, $\tau(x;y)$ is a complete system of minimal equations over $C$ and $K\models \tau(a;b)$. It remains to show uniqueness of such system. Assume that $\tau'(x;y)$ is another complete system of minimal equation over $C$ satisfied by $(a;b)$, say associated to $n_1',\ldots,n_t'$, $\CC'$ etc. By definition and Lemma \ref{lm_ordersingleelement_mult}, $n_i'$ is the order of $b_i$ over $\vect{Ca}$ hence $n_i' = n_i$ and $\CC' = \CC$. For a given $(k_1,\ldots,k_t)\in \CC$, let ${(b_1^{k_1}\ldots b_t^{k_t})}^N = ca_1^{l_1}\ldots a_r^{l_r}$ be the corresponding equation in $\tau$ and $(b_1^{k_1}\ldots b_t^{k_t})^{N'} = c'a_1^{l_1'}\ldots a_r^{l_r'}$ be the corresponding equation in $\tau'$. As $\gcd(N,l_1,\ldots,l_r) = \gcd(N',l_1',\ldots,l_r') = 1$, by Lemma \ref{lm_ordersingleelement_mult}, $N = N'$ is the order of $b_1^{k_1}\ldots b_t^{k_t}$ over $\vect{Ca}$, and as $a$ is multiplicatively independent over $C$, it also follows that $l_i = l_i'$ and $c = c'$. We conclude that $\tau = \tau'$.

(2) As $a$ is multiplicatively independent over $C$, it is clear that $\theta$ extends to an homomorphism $\theta_1:\vect{Ca}\rightarrow \vect{Ca'}$ by sending $a_i\mapsto a_i'$. More precisely, as $a$ is multiplicatively independent over $C$, we can write $\vect{Ca} = C\odot a_1^\Z \odot\ldots \odot a_r^\Z$. 
Now for any choice of $a_i'$, the map $a_i^k\mapsto a_i'^k$ defines a group homomorphism from $a_i^\Z$ onto $a_i'^\Z$ (note that if the order of $a_i'$ is infinite, this is an isomorphism, otherwise, the map is not injective and the map goes onto the finite group $\mu_n$ for $n$ the order of $a_i'$)\footnote{There is a more general condition for this: let $a,b$ be singletons, then there is a group homomophism $h:a^\Z\rightarrow b^\Z$ \textit{sending $a$ on $b$} if and only if the order of $b$ divides the order of $a$, with the convention that any $n$ divides infinity and infinity divides infinity.}. In turn we have a map $\theta_0:\vect{a}\rightarrow \vect{a'}\seq \vect{Ca'}$ and a map $\theta:C\rightarrow C'\seq \vect{Ca'}$, so we can define the map $\theta_1: \vect{Ca}\rightarrow \vect{Ca'}$ by $\theta_1(cu) = \theta(c)\theta_0(u)$ for $c\in C$ and $u\in \vect{a}$. This is a well defined map because $C\cap\vect{a} = \set{1}$ so the decomposition of every element of $\vect{Ca}$ into $cu$ is unique. It is easy to check that as defined, the map $\theta_1$ is indeed a homomorphism. In order to show that we can extend $\theta_1$ to an homomorphism $\theta':\vect{Cab}\rightarrow \vect{Ca'b'}$ which maps $b_i$ to $b_i'$, one has to check that if $b_1^{m_1}\ldots b_t^{m_t} = c a_1^{u_1}\ldots a_r^{u_r}$, then $b_1'^{m_1}\ldots b_t'^{m_t} = \theta(c) a_1'^{u_1}\ldots a_r'^{u_r}$, for all $c\in C$, $m_i,u_i\in \Z$. (Indeed, you can then define $\theta'(b_i) = b_i'$ and extend linearly to a map $\vect{Cab}\rightarrow \vect{C'a'b'}$ by setting $\theta'(ca_1^{l_1}\ldots a_r^{l_r}b_1^{k_1}\ldots b_t^{k_t}) = \theta(c)\theta_1(a_1)^{l_1}\ldots \theta_1(a_r)^{l_r}\theta'(b_1)^{k_1}\ldots \theta'(b_t)^{k_t} = \theta(c)a_1'^{l_1}\ldots a_r'^{l_r}b_1'^{k_1}\ldots b_t'^{k_t}$. To check that this is a well defined map, one has to check that if \[ca_1^{l_1}\ldots a_r^{l_r}b_1^{k_1}\ldots b_t^{k_t} = c'a_1^{l_1'}\ldots a_r^{l_r'}b_1^{k_1'}\ldots b_t^{k_t'}\] then
\[\theta'(ca_1^{l_1}\ldots a_r^{l_r}b_1^{k_1}\ldots b_t^{k_t}) = \theta'(c'a_1^{l_1'}\ldots a_r^{l_r'}b_1^{k_1'}\ldots b_t^{k_t'}).\]
So it is enough to prove that if $ca_1^{l_1}\ldots a_r^{l_r}b_1^{k_1}\ldots b_t^{k_t} = 1$, then $\theta'(ca_1^{l_1}\ldots a_r^{l_r}b_1^{k_1}\ldots b_t^{k_t}) = 1$, which is equivalent to proving that if $b_1^{m_1}\ldots b_t^{m_t} = c a_1^{u_1}\ldots a_r^{u_r}$, then $b_1'^{m_1}\ldots b_t'^{m_t} = \theta(c) a_1'^{u_1}\ldots a_r'^{u_r}$, for all $c\in C$, $m_i,u_i\in \Z$.)

   Assume that $b_1^{m_1}\ldots b_t^{m_t} = c a_1^{u_1}\ldots a_r^{u_r}$. For each $i\leq t$, let $m_i = n_iq_i+r_i$ be the euclidean division of $m_i$ by $n_i$, for $n_i$ the order of $b_i$ over $\vect{Ca}$. We get 
\[b_1^{r_1}\ldots b_t^{r_t} = c a_1^{u_1}\ldots a_r^{u_r}b_1^{-n_1q_1}\ldots b_t^{-n_t q_t}.\]
As $b_i^{-n_iq_i}\in \vect{Ca}$, there is $c_1\in C$ and $v_1,\ldots,v_r\in \Z$ such that $b_1^{r_1}\ldots b_t^{r_t} = c_1 a_1^{v_1}\ldots a_r^{v_r}$. Let $d = \gcd(r_1,\ldots,r_t,v_1,\ldots,v_r)$, $r_i = ds_i$, $v_i = d l_i$. As $C = \vect{C}^\dv$, there is $c_2$ such that $c_2^d = c_1$. We have 
\[(b_1^{s_1}\ldots b_t^{s_t})^d = (c_2 a_1^{l_1}\ldots a_r^{l_r})^d.\]
As $C = \vect{C}^\dv$, there is $c_3$ such that $c_3^d = c_1$ and
\[b_1^{s_1}\ldots b_t^{s_t} = c_3 a_1^{l_1}\ldots a_r^{l_r}.\]
Let $N = \gcd(s_1,\ldots,s_t)$ and $s_i = Nk_i$. As $\gcd(s_1,\ldots,s_t,l_1\ldots,l_r) = 1$, we have $\gcd(N,l_1,\ldots,l_r) = 1$ and 
\[(b_1^{k_1}\ldots b_t^{k_t})^N = c_3a_1^{l_1}\ldots a_r^{l_r}.\]
The latter is an instance of a minimal equation satisfied by $b_1,\ldots ,b_t,a_1,\ldots,a_r$. By uniqueness of the complete system of minimal equations (1), $(y_1^{k_1}\ldots y_t^{k_t})^N = c_3x_1^{l_1}\ldots x_r^{l_r}$ is in $\tau(x;y)$, hence by hypotheses, $(b_1'^{k_1}\ldots b_t'^{k_t})^N = \theta(c_3)a_1'^{l_1}\ldots a_r'^{l_r}$. 
It remains to check that $b_1'^{m_1}\ldots b_t'^{m_t} = \theta(c) a_1'^{u_1}\ldots a_r'^{u_r}$. From $(b_1'^{k_1}\ldots b_t'^{k_t})^N = \theta(c_3)a_1'^{l_1}\ldots a_r'^{l_r}$, we get $b_1'^{s_1}\ldots b_t'^{s_t} = \theta(c_3)a_1'^{l_1}\ldots a_r'^{l_r}$. By raising to the $d$-th power, we have $b_1'^{r_1}\ldots b_t'^{r_t} = (\theta(c_3))^d a_1'^{v_1}\ldots a_r'^{v_r}$. As $(\theta(c_3))^d = \theta(c_3^d) = \theta(c_1)$, we have 
\[b_1'^{r_1}\ldots b_t'^{r_t} = \theta(c_1) a_1'^{v_1}\ldots a_r'^{v_r}.\quad (\star)\]  

We have $ca_1^{u_1}\ldots a_r ^{u_r}b_1^{-n_1q_1} \ldots b_t^{-n_tq_t} = c_1a_1^{v_1}\ldots a_r^{v_r}\in \vect{Ca}$. We may apply $\theta_1$ on both sides to get $\theta_1(ca_1^{u_1}\ldots a_r ^{u_r}b_1^{-n_1q_1} \ldots b_t^{-n_tq_t}) = \theta(c_1)a_1'^{v_1}\ldots a_r'^{v_r}$. Also, \[\theta_1(ca_1^{u_1}\ldots a_r ^{u_r}b_1^{-n_1q_1} \ldots b_t^{-n_tq_t}) = \theta(c)a_1'^{u_1}\ldots a_r'^{u_r}\theta_1(b_1^{n_1})^{-q_1}\ldots \theta_1(b_t^{n_t})^{-q_t}.\quad (\ast)\]
Note that $b_i^{n_i}\in \vect{Ca}$, but not $b_i$, hence $\theta_1(b_i^{n_i})$ makes sense but not $\theta_1(b_i)^{n_i}$. Now recall that if $b_1^{n_1} = c_0a_1^{w_1}\ldots a_r^{w_r}$, then we also have $b_1'^{n_1} = \theta(c_0)a_1'^{w_1}\ldots a_r'^{w_r}$. This follows from the very definition of a complete system of minimal equations (Definition \ref{def:completesystemofminimalequations}): $N = n_i$ for $(k_1,\ldots,k_t) = (0,\ldots,1,\ldots,0)$. As $\theta_1:\vect{Ca}\to \vect{Ca'}$ we conclude that $\theta_1(b_1^{n_1}) = b_1'^{n_1}$ and more generally $\theta_1(b_i^{n_i}) = b_i'^{n_i}$. It follows from $(\ast)$ that \[\theta(c_1)a_1'^{v_1}\ldots a_r'^{v_r}= \theta(c)a_1'^{u_1}\ldots a_r'^{u_r}b_1'^{-n_1q_1}\ldots b_t'^{-n_tq_t}.\]
Using $(\star)$ we get $b_1'^{m_1}\ldots b_t'^{m_t} = \theta(c) a_1'^{u_1}\ldots a_r'^{u_r}$, which concludes the proof.
\end{proof}

To get Proposition \ref{prop_ext_mult} (1): for any $a$ and $C = \vect{C}^\dv$, there is a maximal subtuple $a'\seq a$ multiplicatively independent over $C$, then $b:=a\setminus a'\seq \vect{Ca'}^\dv$, so apply Lemma \ref{lm_ext_mult} (1). By definition, Proposition \ref{prop_ext_mult} (2) is just Lemma \ref{lm_ext_mult} (2).

\section{Pseudofinite-cyclic groups (joint with I. Herzog)}\label{appx_psfc}

This section is joint with I. Herzog.

We switch to additive notation for abelian groups. In this section, we use abundantly classical results of Szmielew about the model theory of abelian groups, as presented in \cite[Appendix A.2]{Hodges}. For an abelian group $G$ and a prime $p$, we denote $G[p] = \set{g\in G\mid pg = 0}$ and $pG = \set{pg\mid g\in G}$.

\begin{proposition}\label{prop_herzog}
An abelian group $G$ is pseudofinite-cyclic if and only if for all primes $p$ we have:
\[\abs{G[p]} = \abs{G/pG}\leq p.\]
\end{proposition}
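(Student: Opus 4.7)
The plan is to apply Szmielew's classification of the elementary theory of abelian groups by numerical invariants at each prime: the Ulm invariants $U(p,n;\cdot) = \dim_{\mathbb{F}_p}(p^{n-1}G \cap G[p] / p^n G \cap G[p])$, the Pr\"ufer invariant $D(p;\cdot) = \dim_{\mathbb{F}_p}(G[p] \cap \bigcap_n p^n G)$, and the torsion-free invariant $\mathrm{Tf}(p;\cdot) = \dim_{\mathbb{F}_p}(G/(pG + G[p^\infty]))$. The forward direction is first-order: any finite cyclic group $C$ fits in the exact sequence $0 \to C[p] \to C \xrightarrow{\pw^p} pC \to 0$, so $|C[p]| = |C/pC|$, and both are bounded by $p$ since $C$ is cyclic; this first-order property descends through ultraproducts and elementary equivalence.

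For the converse, assuming the hypothesis, each relevant Szmielew invariant at every prime $p$ takes values in $\{0,1\}$. Analysing the descending filtration $G[p] \supseteq pG \cap G[p] \supseteq p^2G \cap G[p] \supseteq \cdots$ inside the cyclic group $G[p]$, I would show that the $p$-profile of $G$ falls into exactly one of:
\begin{itemize}
    \item[(i)] $G[p] = 0$ and $pG = G$, so all $p$-invariants vanish;
    \item[(ii)] there is a unique $n_p \geq 1$ with $p^{n_p-1} G \cap G[p] = G[p]$ but $p^{n_p} G \cap G[p] = 0$, identifying a $\mathbb{Z}/p^{n_p}\mathbb{Z}$-like Ulm contribution;
    \item[(iii)] $p^n G \cap G[p] = G[p]$ for every $n$, together with $G/pG \neq 0$ (forced by $|G[p]|=|G/pG|$), producing simultaneously non-vanishing Pr\"ufer and torsion-free $p$-invariants; the latter uses the observation that $G[p^\infty] \subseteq \bigcap_n p^n G \subseteq pG$, whence $G/(pG + G[p^\infty]) = G/pG \neq 0$.
\end{itemize}

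Next I would realise this invariant profile as an ultraproduct of finite cyclic groups. Enumerating the primes $p_1, p_2, \ldots$, fixing a non-principal ultrafilter $\mathcal{U}$ on $\mathbb{N}$, and setting $N_i = \prod_{k \leq i} p_k^{e_k(i)}$, where $e_k(i)$ equals $0$, the constant $n_{p_k}$, or $i$ according as $p_k$ is in case (i), (ii), or (iii), a direct computation verifies that $\prod_\mathcal{U} \mathbb{Z}/N_i\mathbb{Z}$ has the same Szmielew invariants as $G$. An additional factor of a pairwise distinct prime in each $N_i$ produces a torsion-free divisible summand in the ultraproduct matching any $\mathbb{Q}$-vector space part of $G$. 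Szmielew's theorem then gives $G \equiv \prod_\mathcal{U} \mathbb{Z}/N_i \mathbb{Z}$, so $G$ is pseudofinite-cyclic.

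The main technical obstacle is case (iii), where both the Pr\"ufer invariant and the torsion-free invariant at $p$ must become non-zero simultaneously in the ultraproduct. The sequence $(p^{e_k(i)-1})_i$ represents a non-trivial element of $G[p] \cap \bigcap_n p^n G$ (witnessing the Pr\"ufer invariant), while the constant sequence $(1)_i$ has unbounded order and lies outside $pG$, witnessing the torsion-free invariant. These explicit ultraproduct witnesses constitute the computational heart of the converse direction.
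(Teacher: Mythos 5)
Your proof is correct and follows essentially the same route as the paper's: the forward direction is identical, and for the converse you pin down the $p$-local Szmielew invariants into the same three cases that the paper encodes by showing $H_p \in \{0,\ \Z(p^{\alpha_p}),\ \Z(p^\infty)\oplus\Z_p\}$, and then realise that profile by an ultraproduct of finite cyclic groups with growing $p$-exponents at the ``infinite'' primes (your single family $N_i$ is a tidier packaging of the paper's separate $F_n$ and $K_n$ families). One small point to make explicit: in case (ii) you must also verify that the torsion-free invariant $\dim_{\mathbb{F}_p} G/(pG+G[p^\infty])$ vanishes, which follows from the contrapositive of your case-(iii) observation --- if one had $G[p^\infty]\subseteq pG$, then lifting inductively would force $G[p]\subseteq\bigcap_n p^nG$, contradicting $p^{n_p}G\cap G[p]=0$.
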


\begin{proof}
Assume that $G$ is pseudofinite-cyclic. Note that for any finite cyclic group $C$ and prime $p$, the map $x\mapsto px$ induces an isomorphism between $C/C[p] $ and $pC$. It follows that $\frac{\abs{C}}{\abs{pC}} = \abs{C[p]}$ hence $\abs{C/pC} = \abs{C[p]}$. As $C$ is cyclic, $C[p]$ is cyclic of order $1$ or $p$, hence we get $\abs{C[p]} = \abs{C/pC}\leq p$. This is an elementary statement hence holds for all pseudofinite-cyclic groups.

Conversely, assume that $G$ is a group satisfying $\abs{G[p]} = \abs{G/pG}\leq p$, for all primes $p$. We call this condition $(\star)$. By quantifier elimination for abelian groups, $G$ is elementary equivalent to a Szmielew group, we may assume that $G$ is of the form
\[\bigoplus_{p\in \PP} \left[ \oplus_{n>0}\Z(p^n)^{\kappa_{p,n}}\oplus \Z(p^\infty)^{\lambda_p}\oplus \Z_p^{\nu_p}\right]\oplus \Q^\epsilon\]
    with $\kappa_{p,n}, \lambda_p, \nu_p$ finite or countable, and $\epsilon=0$ or $\omega$. Let $G_p$ be the $p$-component of the torsion subgroup of $G$ so that $G[tor] = \bigoplus_{p} G_p$, i.e. $G_p = \oplus_{n>0}\Z(p^n)^{\kappa_{p,n}}\oplus \Z(p^\infty)^{\lambda_p}$, and let $H_p = G_p\oplus \Z_p^{\nu_p}$
    \begin{claim}
    For all $p\in \PP$, $H_p = \Z(p^{\alpha_p})$ or $H_p = \Z(p^\infty)\oplus \Z_p$.
    \end{claim}
    \begin{proof}[Proof of the claim]
    Observe first that $H_p$ satisfy $(\star)$. If $G_p $ is finite, we prove that $G_p = \Z(p^\alpha_p)$ and $\mu_p = 0$. If $G_p$ is finite, then $G_p = \Z(p^{\alpha_1})\oplus \ldots \oplus \Z(p^{\alpha_n})$. Such group satisfies $\abs{G_p[p]}\leq  p$ if and only if $n=1$ hence $G_p = \Z(p^{\alpha_p})$. If $G_p = 0$, then $H_p = \Z_p^{\nu_p}$. As $H_p[p] = 0$, it follows from $(\star)$ that $pH_p = H_p$ hence $\nu_p = 0$. If $G_p = \Z(p^{\alpha_p})$ with $\alpha_p\neq 0$, then $\abs{G_p/pG_p} = p$. It follows that $(G_p\oplus \Z_p^{\nu_p})/p(G_p \oplus \Z_p^{\nu_p})$ is of order $p^{1+\nu_p}$, hence $(\star)$ implies that $\nu_p = 0$.
 
 If $G_p$ is infinite, then $G_p = \Z(p^\infty)$ and $\mu_p = 1$. If $G_p$ is infinite then $\lambda_p\geq 1$.  As $\abs{G_p[p]}\leq p$, there are not factors of the form $\Z(p^{\alpha_1})\oplus \ldots \oplus \Z(p^{\alpha_n})$, and $\lambda_p = 1$ hence $G_p = \Z(p^\infty)$. As $H_p = \Z(p^\infty)\oplus \Z_p^{\nu_p}$, $\nu_p = 0$ implies that $pH_p = H_p$ and contradicts $\abs{H_p/pH_p} = H_p[p] = p$ and $\nu_p\geq 1$ implies that  $H_p/pH_p$ is of order $p^{\nu_p}$ hence by $(\star)$ we have $\nu_p = 1$.
    \end{proof}
    
    Let $P = \set{p\in \PP\mid H_p = \Z(p^{\alpha_p})}$ and $Q = \set{p\in \PP\mid H_p = \Z(p^\infty)\oplus \Z_p}$. Then $G$ is elementary equivalent to 
    \[\bigoplus_{p\in P} \Z(p^{\alpha_p}) \oplus \bigoplus_{q\in Q} (\Z(q^\infty)\oplus \Z_q)\oplus \Q^\epsilon.\]
    If both $P$ and $Q$ are empty, then $G$ is elementary equivalent to $\Q^\epsilon$, which is easily checked to be elementary equivalent to the ultraproduct of $\Z(p)$ where $p$ varies. So we may assume that $P$ or $Q$ are nonempty.
We define cyclic groups $(F_n)_{n< \omega}$.
 \begin{itemize}
    \item If $P$ is infinite. Let $(p_n)_{n<\omega}$ be an enumeration of $P$ and for each $n<\omega$, we define:
    \[F_n = \Z(p_1^{\alpha_1})\oplus\ldots\oplus \Z(p_n^{\alpha_n}).\]
    \item If $P$ is finite and $Q\neq \emptyset$, we define $F_n = \bigoplus_{p\in P} \Z(p^{\alpha_p})$, for all $n<\omega$. 
    \item If $P$ is finite and $Q = \emptyset$, we define $F_n = \bigoplus_{p\in P} \Z(p^{\alpha_p})\oplus \Z(s_1^n)\oplus\ldots \oplus \Z(s_n^n)$, for all $n<\omega$, for some enumeration $(s_n)_{n<\omega}$ of $\PP\setminus P$.
 \end{itemize}
    Then, for any non-principal ultrafilter $\U$ on $\omega$, one checks that:
\[\prod_{n<\omega} F_n/\U \equiv \oplus_{p\in P} \Z(p^{\alpha_p})\oplus \Q^{\epsilon_0}\]
where $\epsilon_0 = 0$ if $P$ is finite and $Q = \emptyset$, and $\epsilon_0 = \omega$ if $P$ is infinite or $P$ is finite and $Q = \emptyset$. (To check this, if $P$ is finite, this is clear, so assume $P$ infinite. Observe first that the torsion $N[tor]$ of $N=\prod_{n<\omega} F_n/\U$ is $\oplus_{p\in P} \Z(p^{\alpha_p})$, and that $N/N[tor]$ is divisible and torsion-free, so isomorphic to a $\Q$-vector space. Then use the classical fact that for abelian groups $A\equiv B$ and $C\equiv D$ implies $A\oplus B\equiv C\oplus D$, see \cite[Lemma A.1.6]{Hodges}.)

We define cyclic groups $(K_n)_{n<\omega}$.
\begin{itemize}
\item If $Q$ is infinite, let $(q_n)_{n<\omega}$, be an enumeration of $Q$ and we define for each $n<\omega$:
\[K_n = \Z(q_1^n)\oplus\ldots\oplus \Z(q_n^n).\]
\item If $Q$ is finite, define $K_n = \bigoplus_{q\in Q} \Z(q^n)$.
\end{itemize}
 Note that in both cases $K_n$ is cyclic for all $n<\omega$. Then, for any non-principal ultrafilter $\U$ on $\omega$, one checks that 
\[\prod_{n<\omega} K_n/\U \equiv \bigoplus_{q\in Q} \Z(q^\infty)\oplus \Z_q.\]
(Again, to check this, let $M = \prod_{n<\omega} K_n/\U$. First, identify the torsion $M[tor] = \oplus_{q\in Q} \Z(q^\infty)$ and observe that the torsion free-part $L = M/M[tor]$ is $n$-divisible for $n$ coprime to $Q$, and that $qL$ is of index $q$ in $L$, for all $q\in Q$.)

Observe that, as $P\cap Q = \emptyset$, $F_n\oplus K_n$ is finite cyclic. Then one checks that 
\[\prod_{n<\omega} F_n\oplus K_n/\U \equiv \bigoplus_{p\in P} \Z(p^{\alpha_p}) \oplus \bigoplus_{q\in Q} (\Z(q^\infty)\oplus \Z_q)\oplus \Q^{\epsilon_0}\]

Finally, we have to check that we may assume $\epsilon_0 = \epsilon$. If $Q \neq \emptyset$ or $P$ is infinite, then $G$ has unbounded exponent, hence we may assume $\epsilon = 0$ and also $\epsilon_0 = 0$. If $Q = \emptyset$ and $P$ is finite, $\epsilon = \omega$ since $G$ is infinite. Also, by choice,  $\epsilon_0 = \omega$ hence $\epsilon = \epsilon_0$.

We conclude that $G$ is elementary equivalent to $\prod_{n<\omega} F_n\oplus K_n$, so $G$ is pseudofinite-cyclic.
\end{proof}

\bibliographystyle{alpha}
\bibliography{biblio}

\end{document}